\title[Stability of spacetimes with SUSY compactifications]{Global stability of spacetimes with supersymmetric compactifications}
\date{\today}
\author[L. Andersson]{Lars Andersson}
\email{laan@aei.mpg.de}
\address{Albert Einstein Institute, Am M\"uhlenberg 1, D-14476 Potsdam, Germany }
\author[P. Blue]{Pieter Blue}
\email{p.blue@ed.ac.uk}
\address{Maxwell Institute and The University of Edinburgh, Peter Guthrie Tait Road, Edinburgh, EH9~3FD, UK}
\author[Z. Wyatt]{Zoe Wyatt}
\email{zoe.wyatt@ed.ac.uk}
\address{Maxwell Institute and The University of Edinburgh, Peter Guthrie Tait Road, Edinburgh, EH9~3FD, UK}
\author[S-T. Yau]{Shing-Tung Yau} 
\email{yau@math.harvard.edu}
\address{Center of Mathematical Sciences and Applications, Harvard University, 20~Garden Street,
Cambridge, MA 02138, USA}
\address{Department of Mathematics, Harvard University, Cambridge, MA 02138, USA }
\newtheorem{theorem}{Theorem}
\newtheorem{definition}[theorem]{Definition}
\newtheorem{lemma}[theorem]{Lemma}
\newtheorem{corollary}[theorem]{Corollary}
\newtheorem{conjecture}[theorem]{Conjecture}
\newtheorem{remark}[theorem]{Remark}
\numberwithin{theorem}{section}
\newcounter{mnotecount}[section]
\newcommand{\Naturals}{\mathbb{N}}
\newcommand{\Integers}{\mathbb{Z}}
\newcommand{\Reals}{\mathbb{R}}
\newcommand{\Circle}{\mathbb{S}^1}
\newcommand{\sphereN}[1]{\mathbb{S}^{#1}}
\newcommand{\di}{\mathrm{d}}
\newcommand{\Ric}{\mathop{\mathrm{Ric}}}
\newcommand{\Riem}{\mathop{\mathrm{Riem}}}
\newcommand{\diVol}{\di\mu}
\newcommand{\Compact}{K}  
\newcommand{\cMet}{k}     
\newcommand{\bMet}{\hat{g}}  
\newcommand{\bCD}{\nabla[\bMet]}   
\newcommand{\refMet}{\hat{e}}  
\newcommand{\refCD}{\nabla[\refMet]} 
\newcommand{\mE}{\mathcal{E}}  
\newcommand{\LGen}{Z}
\newcommand{\lapse}{N}
\newcommand{\shift}{X}
\newcommand{\decayRate}{\delta(n)}  
\newcommand{\cLap}{\Delta_\Compact} 
\newcommand{\cCD}{{\CD[\cMet]}}   
\newcommand{\diCVol}{\di\mu_{\cMet}}  
\newcommand{\SchwarzschildMass}{C_S}
\newcommand{\initialMetric}{\gamma}
\newcommand{\initialIIFF}{\kappa}
\newcommand{\RedinitialMetric}{{g_0}}
\newcommand{\RedinitialDeriv}{{g_1}}
\newcommand{\CD}{\nabla} 
\newcommand{\Lich}{\Delta_L}
\newcommand{\mL}{\mathcal{L}}
\newcommand{\RegIndex}{N}
\newcommand{\dRegIndex}{\tilde d}
\newcommand{\tnu}{\tilde\nu}
\newcommand{\ia}{\alpha}
\newcommand{\ib}{\beta}
\newcommand{\ic}{\gamma} 
\newcommand{\id}{\delta}
\newcommand{\ica}{A}
\newcommand{\icb}{B}
\newcommand{\icc}{C}
\newcommand{\icd}{D}
\tikzset{->-/.style={decoration={
  markings,
  mark=at position #1 with {\arrow{>}}},postaction={decorate}}}
  \tikzset{-<-/.style={decoration={
  markings,
  mark=at position #1 with {\arrowreversed[red]{latex'}}},postaction={decorate}}}
\tikzstyle{singularity}=[red,dotted]
\tikzstyle{infinity}=[green,dashed]
\tikzstyle{regular}=[black]
\tikzset{dashdot/.style={dash pattern=on .4pt off 3pt on 4pt off 3pt}}
\tikzstyle{CauchyHorizon}=[orange,dashdot]
\begin{document}
\maketitle

\begin{abstract}
This paper proves the stability, with respect to the evolution determined by the vacuum Einstein equations, of the Cartesian product of high-dimensional Minkowski space with a compact, Ricci-flat Riemannian manifold that admits a spin structure and a nonzero parallel spinor. Such a product includes the example of Calabi-Yau and other special holonomy compactifications, which play a central role in supergravity and string theory. The stability proved in this paper provides a counter example to an instability argument by Penrose \cite{Penrose:InstabilityOfExtraSpaceDimensions}. 
\end{abstract}

\setcounter{tocdepth}{1}

\section{Introduction}\label{sec:introduction}
Let $(\Reals^{1+n}, \eta_{\Reals^{1+n}})$ be the $(1+n)$-dimensional Minkowski spacetime, and let $(\Compact,\cMet)$ be a compact, Ricci-flat Riemannian manifold that has a cover that admits a spin structure and a nonzero parallel spinor. The spacetime $\mathcal{M} = \Reals^{1+n} \times \Compact$ with metric 
\begin{align}
\bMet=\eta_{\Reals^{1+n}}+\cMet
\end{align}
is globally hyperbolic and Ricci flat, i.e, it is a solution to the $(1+n+d)$-dimensional vacuum Einstein equations. Such spacetimes play an essential role in supergravity and string theory \cite{CandelasHorowitzStromingerWitten}. In this paper we refer to $(\mathcal{M},\bMet)$ as a spacetime with a supersymmetric compactification and $(\Compact, \cMet)$ as the internal manifold. 

The simplest spacetime with a supersymmetric compactification is the Kaluza-Klein spacetime $(\Reals^{1+3}\times\Circle_\theta, \eta_{\Reals^{1+n}}+\di\theta^2)$, which has been studied since the 1920s \cite{Kaluza,Klein}. As shown by Witten in an influential paper  \cite{WittenKaluzaKlein}, this spacetime is unstable at the semiclassical level. Nonetheless in the same work Witten argued that the spacetime should be classically linearly stable. 

By contrast, Penrose has sketched an argument intended to show that spacetimes with supersymmetric compactifications are generically classically unstable, for every dimension $n$ and all internal manifolds, except possibly when the internal manifold is a flat $d$-dimensional torus \cite{PenroseBook, Penrose:InstabilityOfExtraSpaceDimensions}. There are theorems motivated by these considerations that generalize the classical singularity theorems to trapped surfaces of arbitrary co-dimension \cite{GallowaySenovilla,CiprianiSenovilla}. However, the results of the present paper show that for spacetimes with supersymmetric compactifications the instability argued by Penrose does not hold for $n\geq9$, and we conjecture here that in fact stability holds for $n\geq 3$.  The non-negativity of the spectrum of the Lichnerowicz Laplacian on symmetric 2-tensors, which holds for the internal spaces by the result of Dai, Wang, and Wei \cite{DaiWangWei}, plays a crucial role in our stability proof. In fact, this non-negativity, which is conjectured to hold for all compact Ricci flat manifolds, is sufficient for our result. See section \ref{sec:SpinorsLichnerowicz} for details.

In order to state our main theorem, we need to introduce some notation. For the product spacetime $\Reals^{1+n}\times\Compact$ we denote spacetime indices by $\ia,\mu,\nu\ldots$, Minkowski indices by $i,j,k \ldots$ and internal indices by $\ica,\icb,\icc\ldots$.  
For a general pseudo-Riemannian metric $g$, let $\nabla[g]$ denote its Levi-Civita connection, $\Riem[g]$ its Riemann curvature tensor, $\Ric[g]$ its Ricci curvature and $\di\mu_g$ its volume form. Define the following contraction
\begin{align}\label{def:Rcirc}
(R[g]\circ u)_{\mu\nu}&{}=R_{\mu\rho\nu\lambda}[g] u^{\rho\lambda} ,
\end{align}
which acts on symmetric $(0,2)$-tensors $u_{\mu\nu}$.
Given the supersymmetric spacetime metric $\bMet$ on $\Reals^{1+n}\times\Compact$, let
\begin{align}
(g_E)_{\mu\nu} = \bMet_{\mu\nu}+2(\di t)_\mu (\di t)_\nu .
\end{align}
where $\di t$ is with respect to the standard Cartesian coordinates on $\Reals^{1+n}$.
On $\Compact$ and $\Reals^{1+n}\times\Compact$ respectively, define the following inner products on $(0,2)$ tensors
\begin{align}
\langle u,v \rangle_\cMet={}&\cMet^{\ica\icc}\cMet^{\icb\icd} u_{\ica\icb}v_{\icc\icd}
 ,\\
\langle u,v \rangle_E ={}& g_E^{\mu\nu}g_E^{\rho\sigma} u_{\mu\rho}v_{\nu\sigma} .
\end{align}
Define $|u|_\cMet= (\langle u,u\rangle_\cMet)^{1/2}$, and similarly for $|u|_E$. 

The following is our main result. 
The details of some of the concepts appearing in the statement of the theorem appear in definitions \ref{def:initialDataSet}, \ref{def:solutionOfEinsteinEquationWithData}, \ref{def:WaveGauge}, \ref{def:higherDimensionalSchwarzschild} and theorem \ref{thm:higherDimensionalSchwarzschildExistsInWaveCoordinates}.

\begin{theorem}
\label{thm:MainResult}
Let $n,d\in\Integers^+$ be such that $n\geq9$, and let $\RegIndex\in\Integers^+$ be sufficiently large. Let $(\Reals^{1+n}\times\Compact,\bMet=\eta_{\Reals^{1+n}}+\cMet)$ be a spacetime with a supersymmetric compactification. 
Let $g_S$ denote the Schwarzschild metric in the $\eta_{\Reals^{1+n}}$-wave gauge with mass parameter $\SchwarzschildMass\geq 0$.

There is an $\epsilon>0$ such that if $(\Reals^n\times\Compact,\initialMetric,\initialIIFF)$ is an initial data set satisfying 
$\initialMetric=g_S+\cMet$ and $\initialIIFF=0$ where $|x|\geq 1$ and satisfying
\begin{align}\label{eq:MainThmSmallness}
\sum_{|I|\leq\RegIndex} \| \nabla[\initialMetric]^I (\initialMetric-\bMet|_{t=0})\|_{L^2(\Reals^n\times\Compact)}^2 
  +\sum_{|I|\leq\RegIndex-1} \|\nabla[\initialMetric]^I \initialIIFF\|_{L^2(\Reals^n\times\Compact)}^2 
+\SchwarzschildMass^2
\leq \epsilon,
\end{align}
then there is a solution $g$ of the vacuum Einstein equations on $\Reals^{1+n}\times\Compact$ with initial data $(\Reals^n\times\Compact,\initialMetric,\initialIIFF)$ and satisfying the $\bMet$-wave gauge. 
There is the bound 
\begin{align}
\sup_{(t,x^i,\omega)\in\Sigma_s\times\Compact}t^{2\decayRate}|g(t,x^i,\omega)-\bMet(t,x^i,\omega)|_E^2
&{}\lesssim \epsilon ,
\end{align}
where the decay rate is given by
\begin{align}\label{eq:DecayRate}
\decayRate ={}& \frac{n-2}{4} .
\end{align}
Finally $(\Reals^{1+n}\times\Compact,g)$ is globally hyperbolic and causally geodesically complete.
\end{theorem}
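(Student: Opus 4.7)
My plan is to fix the $\bMet$-wave gauge, reduce Einstein's equations to a quasilinear wave system for $h_{\mu\nu}=g_{\mu\nu}-\bMet_{\mu\nu}$, decompose $h$ spectrally over $(\Compact,\cMet)$, and close a vector-field bootstrap using the abundance of linear dispersion available for $n\geq 9$. In the $\bMet$-wave gauge the reduced Einstein equations have principal part the geometric wave operator of $g$, and propagation of the gauge from the initial slice follows in the standard way from the contracted Bianchi identity together with the Hamiltonian and momentum constraints satisfied by $(\initialMetric,\initialIIFF)$. This reduces the theorem to a global existence and decay problem for a small quasilinear perturbation of the background product wave system on $\Reals^{1+n}\times\Compact$ with data equal to $g_S+\cMet$ (and zero second fundamental form) outside $|x|=1$.

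I would then decompose $h$ in eigentensors of the Lichnerowicz Laplacian $\Lich$ acting on symmetric $2$-tensors on $(\Compact,\cMet)$. The zero-eigenvalue sector depends only on the Minkowski coordinates and obeys a wave system modelled on the reduced Einstein equations for stability of $(1+n)$-dimensional Minkowski, while each non-zero mode behaves to leading order as a Klein-Gordon-type field on $\Reals^{1+n}$ with mass squared equal to the corresponding eigenvalue of $\Lich$. The crucial structural input is $\Lich\geq 0$ on $(\Compact,\cMet)$: by the parallel-spinor assumption and the theorem of Dai-Wang-Wei, $\Lich$ has non-negative spectrum, so no tachyonic modes arise and the non-kernel part of the spectrum is bounded below by a positive mass gap.

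With this linear picture in hand, I would run an energy method with commuting vector fields adapted to the product structure: Minkowski translations, rotations and Lorentz boosts on the $\Reals^{1+n}$ factor, together with iterated covariant derivatives $\CD[\cMet]$ along $\Compact$. Weighted $L^2$ energies combined with a Klainerman-Sobolev inequality on $\Reals^{1+n}\times\Compact$ would yield the displayed pointwise rate $t^{-\decayRate}$ with $\decayRate=(n-2)/4$; the exponent is chosen to control both the massless zero-mode and the massive sector with a single bootstrap exponent. Causal geodesic completeness and global hyperbolicity would then follow from uniform $C^1$-control of $h$ via the standard continuation argument.

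The main obstacle is the quasilinear self-interaction of the massless zero-mode sector, which reproduces the quadratic Einstein nonlinearity responsible in four dimensions for the weak-null-structure phenomenon overcome by Lindblad-Rodnianski. The hypothesis $n\geq 9$ sidesteps this: free waves on $\Reals^{1+n}$ decay pointwise at $t^{-(n-1)/2}$ and massive fields at $t^{-n/2}$, so quadratic and higher self-interactions are time-integrable along outgoing null cones without invoking any null structure, and the bootstrap for the weighted energies and the decay rate $\decayRate$ closes by continuity. The coupling between zero and non-zero modes introduces no additional difficulty, since the massive sector enjoys strictly stronger linear decay than the massless one.
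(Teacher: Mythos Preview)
Your broad outline (impose the $\bMet$-wave gauge, propagate it via Bianchi and the constraints, use Dai--Wang--Wei to rule out tachyonic modes, and close a vector-field bootstrap exploiting $n\geq 9$) matches the paper, but two of your central choices diverge from it in ways that leave genuine gaps.

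\textbf{Spectral decomposition vs.\ a unified energy.} You propose to decompose $h$ into eigentensors of $\Lich$ and treat the zero mode as a wave system and the positive modes as Klein--Gordon fields. The paper deliberately does \emph{not} do this. Instead it builds a single Lichnerowicz-type stress tensor (equation~\eqref{eq:LinearStressEnergyTensor} and Definition~\ref{def:LichEnergyHyperboloids}) whose energy density contains the term $\langle\bCD^{\ica}u,\bCD_{\ica}u\rangle_E-2\langle R[\bMet]\circ u,u\rangle_E$, and uses Dai--Wang--Wei only to conclude that this integrand is nonnegative after integration over $\Compact$. This yields one coercive energy for the full tensor $h$, commuted with $Z^I$ and powers of the \emph{scalar} Laplacian $\cLap$ (not $\cCD$; elliptic estimates in Lemma~\ref{lem:CompactEllipticEst} recover $\cCD$-derivatives). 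Your mode-by-mode scheme forces you to confront the nonlinear mode coupling you dismiss in one line: the quadratic terms $Q_{\mu\nu}[g](\bCD h,\bCD h)$ and the new curvature terms $F_{\mu\nu}(H,h)$ project nontrivially onto every eigenspace, and the wave and Klein--Gordon sectors require \emph{different} commutator algebras (the scaling field $S$ is admissible only for the former), so a decomposed bootstrap cannot simply borrow the sharp linear rates of each sector.

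\textbf{The source of $\decayRate=(n-2)/4$ and of the threshold $n\geq 9$.} Your last paragraph invokes the sharp linear rates $t^{-(n-1)/2}$ and $t^{-n/2}$ to argue integrability of the nonlinearity, but your own bootstrap only establishes $t^{-\decayRate}$. This is circular. In the paper the rate $\decayRate$ arises from a Sobolev estimate on hyperboloids (Lemma~\ref{lem:SobolevOnHyperboloidsInSUSY}): one foliates by $\Sigma_s=\{t^2-|x|^2=s^2\}$, rescales \`a la H\"ormander, and obtains $s^{4\decayRate}|h|_E^2$ controlled by the commuted energy. The closure condition is then purely arithmetic, $2\decayRate-2>1$, i.e.\ $n\geq 9$ (equation~\eqref{eq:IntegrabilityCondition}), with no appeal to null structure or to the sharp wave/Klein--Gordon rates. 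Your proposal omits the hyperboloidal foliation entirely and does not explain how a Klainerman--Sobolev inequality on $\Reals^{1+n}\times\Compact$ with only the common commutators $\{Z,\cLap\}$ produces exactly $\decayRate=(n-2)/4$; this is the technical heart of the argument. You also do not address the exterior region $|x|\geq t-1$, which the paper handles by showing the solution agrees with $g_S+\cMet$ there for small $\SchwarzschildMass$ (domain-of-dependence plus the explicit Schwarzschild-in-wave-coordinates decay from Theorem~\ref{thm:higherDimensionalSchwarzschildExistsInWaveCoordinates}).
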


The stability result obtained in theorem \ref{thm:MainResult} covers a large class of product spacetimes, including many special holonomy compactifications relevant in supergravity and string theory. Although this paper succeeds in its goal of providing a counter example to the dimension-independent argument in \cite{Penrose:InstabilityOfExtraSpaceDimensions}, from a PDE perspective, theorem \ref{thm:MainResult} should be seen as a preliminary result, and we expect that the assumptions that $n\geq 9$, and that the Cauchy data is Schwarzschild near infinity can be relaxed. In fact we make the following conjecture. 

\begin{conjecture}
Spacetimes with a supersymmetric compactification and $n =3$ are nonlinearly stable.
\end{conjecture}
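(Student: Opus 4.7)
The plan is to combine the Kaluza--Klein spectral decomposition of the main theorem with the Lindblad--Rodnianski weak null-condition analysis for nonlinear stability of $(\Reals^{1+3},\eta_{\Reals^{1+3}})$. Given a perturbation $h=g-\bMet$, I would expand $h$ along an orthonormal eigenbasis for the Lichnerowicz operator $\Lich$ on $(\Compact,\cMet)$: by the Dai--Wang--Wei result the spectrum is non-negative, so modes split into a finite-dimensional kernel (parallel symmetric $(0,2)$-tensors, including the pullback of the $\Reals^{1+3}$ metric and those built from the parallel spinor) and a spectral part with a uniform gap $\lambda_1>0$. On $\Reals^{1+3}$ the kernel modes satisfy reduced Einstein equations of Lindblad--Rodnianski type, while the massive modes satisfy Klein--Gordon equations with masses $m_k=\sqrt{\lambda_k}\geq \sqrt{\lambda_1}>0$.

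Second, I would impose the $\bMet$-wave gauge and write the coupled system as in the main theorem, but now set up the analysis in $3+1$ dimensions. Because wave equations on $\Reals^{1+3}$ with generic quadratic nonlinearities blow up in finite time (John's counterexamples), the only realistic route is to verify that the wave-gauge reduced system, after the mode decomposition, satisfies a weak null condition in the kernel sector and a favourable null/mass structure in the coupling sector. Concretely, I would:
\begin{enumerate}
\item Reproduce the Lindblad--Rodnianski analysis for the kernel sector, treating it as a perturbation of the stability of Minkowski in wave coordinates, with source terms from the massive sector.
\item Treat the massive sector using the hyperboloidal foliation method of LeFloch--Ma / Wang / Ionescu--Pausader for the Einstein--Klein--Gordon system, adapted to infinitely many scalar fields indexed by $k$.
\item Control the quadratic couplings: kernel--kernel is handled by the weak null condition; massive--massive is favourable thanks to the enhanced $t^{-3/2}$ decay of Klein--Gordon; the delicate terms are kernel--massive, where the slowly decaying wave factor multiplies a massive factor.
\end{enumerate}

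The third ingredient is regularity uniform in the internal direction: the coupling to an infinite tower of Klein--Gordon modes forces one to carry Sobolev regularity on $\Compact$ in the commuting vector field hierarchy, and to exploit that high eigenvalues bring extra smallness (via $\lambda_k^{-s}$ factors from integration by parts against $\cLap$). This should allow one to sum the mode-by-mode estimates into a norm bound on $\baseManifold=\Reals^n\times\Compact$. Energy currents would be built from $\partial_t$ together with the Minkowski commuting vector fields on the $\Reals^{1+3}$ factor (Lorentz boosts, rotations, scaling), and from spectral cutoffs on $\Compact$; the hyperboloidal region handles the massive sector and the exterior wave-zone handles the wave sector, glued as in Lindblad--Rodnianski.

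The hard part, and the reason this is stated only as a conjecture, is precisely the kernel--massive coupling in $3+1$ dimensions. In the main theorem the dimensional assumption $n\geq 9$ gives wave decay $t^{-(n-1)/2}$ fast enough that the coupling can be absorbed by straightforward energy estimates; for $n=3$ the wave piece only decays like $t^{-1}$, with logarithmic losses along outgoing null cones, and the cubic and quasilinear structure of the Einstein equations must be inspected mode-by-mode to show that every interaction term either enjoys a classical null form, or is compensated by the mass $m_k>0$ of one factor, or arises from the weak-null resonance structure already compatible with stability of Minkowski. Verifying this nonresonance uniformly in the Kaluza--Klein spectrum, together with sharp decay for the massive modes that does not degenerate as $m_k\to\infty$, is the principal obstacle.
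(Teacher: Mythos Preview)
The statement you are addressing is labelled a \emph{conjecture} in the paper, and the paper provides no proof of it; the authors explicitly say that the $n=3$ case ``is beyond the scope of this paper'' and that they ``intend to explore this in future work.'' There is therefore no paper proof to compare your attempt against.

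What you have written is not a proof either, as you yourself acknowledge in the final paragraph: it is a research strategy. As a strategy it is broadly aligned with the hints the authors give in the introduction---they too point to the spectral decomposition into a massless (wave) sector and a massive (Klein--Gordon) tower, cite LeFloch--Ma for the $(1+3)$-dimensional Einstein--Klein--Gordon problem, and invoke the Lindblad--Rodnianski weak-null machinery. Your identification of the kernel--massive coupling as the principal obstruction is reasonable. But several steps in your outline paper over genuine open problems: the effective $\Reals^{1+3}$ system satisfied by the kernel modes is not literally the vacuum Einstein system in wave gauge (the kernel of $\mL$ on $\Compact$ contains all infinitesimal Einstein deformations of $\cMet$, and the resulting coupled wave system must have its null structure checked from scratch); the existing Einstein--Klein--Gordon results treat a single massive scalar, not an infinite tower with masses accumulating at infinity, and uniformity in the mode index is a real issue; and the interior-hyperboloidal / exterior-wave-zone gluing in the presence of both sectors is itself a substantial undertaking. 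In short: your plan is plausible and consonant with the paper's own suggestions, but the conjecture remains open and your proposal does not close it.
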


As explained below, this paper uses a relatively simple vector-field argument, while, for example, the proof of global stability for the coupled Einstein--Klein-Gordon system in $(1+3)$-dimensions \cite{LeFlochMa} has required combining vector-field arguments with estimates arising from control on the fundamental solution for the wave equation. 
Such detailed analysis is beyond the scope of this paper, but we intend to explore this in future work. Note that our current method can be easily used to show linear stability as far as $n=3$. 

The decay rate of $|h| \lesssim t^{-\decayRate}$ arises essentially as a linear estimate. The linearisation of the Einstein equation is 
\begin{align}
(\Box_\eta + \cLap + 2 R[\bMet]\circ) h_{\mu\nu} ={}& 0. 
\label{eq:linearisedEinstein}
\end{align}
To study conservation properties of the linear equations we introduce a novel stress-energy tensor
\begin{align}\label{eq:LinearStressEnergyTensor}
T[h]^\mu{}_\nu
&{}= \bMet^{\mu\ia}\langle\bCD_\ia h,\bCD_\nu h\rangle_E - \frac12\bMet^{\ia\ib} \langle\bCD_\ib h,\bCD_\ia h\rangle_E \delta^\mu_\nu\notag\\
&{}+ \langle R[\bMet]\circ h, h\rangle_E \delta^\mu_\nu,
\end{align}
which is specifically adapted to the tensorial operator appearing in \eqref{eq:linearisedEinstein}.
The conditions on $(\Compact,\cMet)$ imply, detailed further in section \ref{sec:SpinorsLichnerowicz}, that the energy integral derived from \eqref{eq:LinearStressEnergyTensor} is non-negative. 

The conditions on $(\Compact,\cMet)$ imply that the operator $-(\cLap+2R\circ)$ has a nonnegative discrete spectrum, and so a spectral decomposition can be applied to solutions $h$ of the linearised Einstein equation \eqref{eq:linearisedEinstein}. The spectral component corresponding to the zero eigenvalue satisfies an effective wave equation, $\Box_\eta (h^0)_{\mu\nu}=0$; the components corresponding to positive eigenvalues $\lambda$ satisfy effective Klein-Gordon equations $(\Box_\eta-\lambda) (h^\lambda)_{\mu\nu}=0$. A decomposition of this type has previously been used in the analysis of wave guides, where $\Compact$ is replaced by a compact subset of $\Reals^d$ with Neumann boundary conditions, see e.g.{} \cite{MetcalfeSoggeStewart,MetcalfeStewart}. When applying the vector-field method to the wave and Klein-Gordon equations, there is a unified approach using a basic energy of the form $\int \sum_{i=0}^n |\partial_i h|^2 +\lambda |h|^2 \diVol$ that can be strengthened by commuting the equation with $\Gamma$, the set of generators of translations, rotations, and boosts. 
 
This unified approach then bifurcates: the Klein-Gordon equation does not admit any further commuting first-order operators but the energy has a non-vanishing lower-order term  $\lambda|h|^2$; in contrast, the wave equation allows for commutation with the generator of dilations, $S=t\partial_t +r\partial_r$, but the lower-order term in the energy vanishes. For the quasilinear Einstein equation, we refrain from performing a spectral decomposition into wave and Klein-Gordon components. Thus, we use only the unified part of the approach (following especially the treatment of quasilinear Klein-Gordon equations in \cite{Hormander}), leaving us with a decay rate that is far from the sharp decay rates of the wave and Klein-Gordon equations. In particular, the vector-field method can be used to prove decay rates, for the wave and Klein-Gordon equations, of $t^{-(n-1)/2}$ and $t^{-n/2}$ respectively.

In light of this, it seems likely that some novel refinement should allow for a significantly better decay rate than $t^{-\decayRate}$ with $\decayRate=(n-2)/4$. 
This paper already contains two types of refinement. 
First, the decay rate is shown to be $s^{-2\decayRate}$ where $s^2=t^2-x^2$ inside light cones. The exponent $2\decayRate=(n-2)/2$ is much closer to the decay rate for the wave and Klein-Gordon equation. 
Second, the same decay rates are proved for $\Gamma^I h$ as for $h$, but, since the $\Gamma$ contain $t$- and $x$-dependent weights,  with respect to a translation invariant basis in Minkowski space, derivatives decay faster than the field $h$ itself. 

Having obtained a linear estimate that improves with increasing $n$, we take $n$ sufficiently large that $2\decayRate-2>1$, so that the nonlinear terms decay sufficiently fast for the linear estimates to remain valid. In particular, we take $n$ sufficiently large that we can ignore all nonlinear structure in the Einstein equation. It is well known that global existence results for semilinear equations in $(1+3)$-dimensions depend delicately on the nonlinearities, for example the null condition \cite{KlainermanNullLectures}. \cite{ChristodoulouKlainerman} used the vector-field method to prove the stability of Minkowski spacetime. One of the major advances in the simplified vector-field argument in \cite{LindbladRodnianski:MinkowskiStability,LindbladRodnianski:WeakNull,LindbladRodnianski:StabilityAgainstCompactPerturbations} was the introduction of the weak null condition and the observation that the Einstein equations in the harmonic gauge satisfy this condition. \cite{LeFlochMa} identified the relevant nonlinear structures for Klein-Gordon equations coupled to the $(1+3)$-dimensional Einstein equation.  

The dimension of the compact manifold only appears in the required regularity of the initial data, which is given explicity in theorem \ref{thm:ResultsReducedEquations}. The restriction to initial data which is exactly Schwarzschild outside of a compact set mirrors the proof of Minkowski stability in $(1+3)$-dimensions by \cite{LindbladRodnianski:StabilityAgainstCompactPerturbations}.

\subsection*{Background and Previous Work.}
Theories of higher-dimensional gravity are of great interest in supergravity and string theory as  possible models of quantum gravity. Many of these theories are built around the spacetimes with supersymmetric compactifications discussed above. 

Until now, the only nonlinear stability results have concerned the simplest Kaluza-Klein case when the internal space is the circle $\Circle$, or in slightly more generality, the flat $d$-dimensional torus. It was shown by one of the authors \cite{Wyatt} that this spacetime is classically stable to toroidal-independent perturbations. We remark that in the physics literature, these are known as zero-mode perturbations. An analagous result for cosmological Kaluza-Klein spacetimes, where the Minkowski spacetime is replaced by the 4-dimensional Milne spacetime, has also recently been shown \cite{BrandingFajmanKroncke}. 

The spacetimes of importance in supergravity and supergravity involve a nontrivial (i.e. non-toroidal) internal manifold with parallel spinors, such as a Calabi-Yau, $G_2$ or $Spin(7)$ manifold. Note that a solution of the 10 or 11-dimensional \textit{vacuum} Einstein equations can be considered as a particular solution of the supergravity equations. Local-in-time existence results are known for both the vacuum Einstein equations \cite{ChoquetBruhat52,ChoquetBruhatGeroch} and for the supergravity equations \cite{ChoquetBruhatSUGRA}. Furthermore, global-in-time existence and decay results for a nonlinear wave equation for $3$-form fields, on a fixed background spacetime with compact internal dimensions have been shown in \cite{Ettinger}. The field equation studied in \cite{Ettinger} is modelled on the supergravity equations with the gravitational interaction turned off. In our present work,  we  consider the stability of spacetimes with supersymmetric compactifications as solutions to the vacuum Einstein equations. In future work we intend to study their stability under the supergravity equations.

\subsection*{Outline of Paper.} 
In section \ref{sec:preliminaries} we introduce: the Lichnerowicz Laplacian, the foliation by hyperboloids, the gauge condition and the higher dimensional Schwarzschild -product spacetime. In section \ref{sec:Sobolev} we prove a Sobolev estimate on hyperboloids with respect to wave-like energies. In section \ref{sec:Energy} we define an energy functional adapted to the internal manifold and to hyperboloids. Finally in section \ref{sec:MainProof} we prove the main theorem.

\section{Preliminaries} \label{sec:preliminaries}
\subsection{Parallel Spinors and the Lichnerowicz Laplacian}\label{sec:SpinorsLichnerowicz}
Our main theorem has been stated for an internal manifold that has a cover that admits a spin structure and a nonzero parallel spinor. In this subsection we detail how this condition relates to a linear stability condition involving the eigenvalues of an operator closely related to the Lichnerowicz Laplacian. 

\begin{definition}[Riemannian Linear Stability]
\label{def:RiemannianLinearStable}
Define $\cLap=\cMet^{\ica\icb}\cCD_\ica\cCD_\icb$ to be the standard Laplacian on $(\Compact,\cMet)$. 
Let $u_{\ica\icb}$ be a symmetric $(0,2)$ tensor defined on $\Compact$. Define $\mL$ to act on such tensors by
\begin{align}
(\mL u)_{\ica\icb}={}& - \cLap u_{\ica\icb} - 2 (R[\cMet]\circ u)_{\ica\icb}.
\end{align}
We define a Ricci-flat manifold $(\Compact,\cMet)$ to be  Riemannian linearly stable iff
\begin{align}
\int_\Compact\langle \mL u, u\rangle_\cMet \diCVol
{}&\geq 0,
\label{eq:RiemannianLinearlyStable}
\end{align}
for all symmetric $(0,2)$-tensors $u_{\ica\icb}$.
\end{definition}

The operator $\mL$ is closely related to the Lichnerowicz Laplacian $\Lich$, which acts on symmetric tensors by
\begin{align}
(\Lich u)_{\ica\icb}={}&(\mL u)_{\ica\icb}+\Ric[\cMet]_{\ica\icc} u^\icc{}_\icb +\Ric[\cMet]^\icc{}_\icb u_{\ica\icc}.
\end{align}
Clearly on a Ricci-flat space these operators are equivalent. The operator $\mL$ is self-adjoint and elliptic, and consequently by the compactness of $\Compact$ and spectral theory, it has a discrete set of eigenvalues of finite multiplicity. Consequently the above definition \eqref{eq:RiemannianLinearlyStable} amounts to a condition $\lambda_{\min}\geq0$ on the lowest eigenvalue $\lambda_{\min}$ of $\mL$. For further details see e.g. \cite{Besse}. 

Our main theorem \ref{thm:MainResult} in fact applies more generally to internal manifolds which are Riemannian linearly stable. For the purposes of this paper, the crucial relation between spacetimes with a supersymmetric compactification and with an internal space that is Riemannian linear stable is the following. 

\begin{theorem}[{\cite[Theorem 1.1]{DaiWangWei}}]\label{thm:DaiWangWei}
If a compact, Ricci-flat Riemannian manifold $(\Compact,\cMet)$ has a cover which is spin and admits a nonzero parallel spinor then it is Riemannian linearly stable.
\end{theorem}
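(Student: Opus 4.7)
Proof proposal. The plan is to follow Dai--Wang--Wei by converting the spectral lower bound for $\mL$ into a positivity statement for the square of a twisted Dirac-type operator built from the parallel spinor. First I would reduce to the case where $(\Compact,\cMet)$ itself is spin and admits the nonzero parallel spinor $\sigma$. If $\tilde{\Compact}\to\Compact$ is a finite spin cover, any symmetric $(0,2)$-tensor $u$ on $\Compact$ lifts to a tensor $\tilde{u}$ on $\tilde{\Compact}$, and since the covering map is a local isometry, $\int_\Compact\langle\mL u,u\rangle_\cMet\diCVol$ equals a positive multiple of the corresponding integral on $\tilde\Compact$. For infinite covers one instead lifts $\mL$-eigentensors to deck-equivariant eigentensors on the cover, reducing positivity on $\Compact$ to positivity on $\tilde\Compact$.

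Next, using $\cMet$ to raise an index, view $u$ as a symmetric endomorphism $U$ of $T\Compact$, and define the spinor-valued $1$-form $\Phi_u(X):=U(X)\cdot\sigma$, where $\cdot$ denotes Clifford multiplication. Because $\sigma$ is parallel, $|\sigma|$ is a positive constant and one has the clean derivative identity $(\nabla[\cMet]_Y\Phi_u)(X)=(\nabla[\cMet]_Y U)(X)\cdot\sigma$. Let $D$ be the twisted Dirac operator on $S\otimes T^*\Compact$. Specialized to the Ricci-flat manifold $\Compact$, the Weitzenb\"ock formula reads
\begin{align}
D^2 &= \nabla[\cMet]^*\nabla[\cMet] + \mathcal{R}^{T^*\Compact},
\end{align}
where $\mathcal{R}^{T^*\Compact}$ is the curvature endomorphism coming from twisting by $T^*\Compact$. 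A pointwise Clifford calculation in a local orthonormal frame $\{e_\ica\}$, using the relation $e_\ica\cdot e_\icb+e_\icb\cdot e_\ica=-2\delta_{\ica\icb}$, the parallel spinor identity, the first Bianchi identity, and Ricci-flatness to eliminate the spurious Ricci-type contractions, should yield $\langle\mathcal{R}^{T^*\Compact}\Phi_u,\Phi_u\rangle = -2|\sigma|^2\langle R[\cMet]\circ u,u\rangle_\cMet$. Together with $|\nabla[\cMet]\Phi_u|^2 = |\sigma|^2|\nabla[\cMet] u|^2$, integrating over the closed manifold $\Compact$ then produces
\begin{align}
\int_\Compact |D\Phi_u|^2\diCVol &= |\sigma|^2\int_\Compact \langle \mL u,u\rangle_\cMet\diCVol,
\end{align}
and \eqref{eq:RiemannianLinearlyStable} follows at once from the non-negativity of the left-hand side.

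The main obstacle is the algebraic identification in the Weitzenb\"ock computation: $\mathcal{R}^{T^*\Compact}$ applied to the specific section $\Phi_u=U(\cdot)\cdot\sigma$ must reproduce exactly the contraction $R[\cMet]\circ u$ appearing in $\mL$, with coefficient $-2$ and with no residual Ricci-type or trace pieces. Generically the curvature endomorphism contains precisely such terms, which would obstruct the identity; Ricci-flatness absorbs the Ricci contractions, while the parallel spinor equation is essential in collapsing the remaining Clifford expressions acting on $\sigma$ into purely tensorial form. This is where the hypothesis of a nonzero parallel spinor is genuinely used, and it cannot be replaced by a weaker curvature condition on $\cMet$.
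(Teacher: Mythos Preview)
The paper does not supply its own proof of this theorem: it is quoted verbatim as \cite[Theorem 1.1]{DaiWangWei} and then used as a black box (see the sentence immediately following the statement and its later invocation in the proof of Lemma~\ref{lem:LichEnergyOnHyperboloids}). So there is nothing in the paper to compare your argument against.

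That said, your sketch is a faithful outline of the Dai--Wang--Wei argument the paper cites: pass to the spin cover carrying the parallel spinor $\sigma$, turn a symmetric $2$-tensor $u$ into the spinor-valued one-form $\Phi_u = U(\cdot)\cdot\sigma$, and use the twisted Lichnerowicz--Weitzenb\"ock formula on $S\otimes T^*\Compact$ to identify $\int_\Compact\langle\mL u,u\rangle_\cMet\,\diCVol$ with a nonnegative Dirac energy. Two small remarks. First, the general Weitzenb\"ock formula for the twisted Dirac operator has an additional $\tfrac{R}{4}$ scalar-curvature term; you are silently using Ricci-flatness (hence $R=0$) when you drop it, and it would be cleaner to say so. Second, your handling of the infinite-cover case (``lift $\mL$-eigentensors to deck-equivariant eigentensors'') is the weakest link: on a noncompact cover the lifted tensor is bounded but not $L^2$, so the integrated Weitzenb\"ock identity is not immediately available. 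One standard way around this is to note that a compact Ricci-flat manifold with a spin cover admitting a nonzero parallel spinor already has a \emph{finite} such cover (via the structure of the restricted holonomy and the Cheeger--Gromoll splitting for compact Ricci-flat manifolds), which reduces you to the clean finite-cover case you already handled. With that adjustment your outline is correct.
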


Note that some of the ideas established in \cite{DaiWangWei} date back to work of Wang  \cite{WangSpinors91} on the deformation theory of parallel and Killing spinors. A spin manifold $(\Compact,\cMet)$ with a non-zero parallel spinor is Ricci flat and has special holonomy, cf.  \cite{WangSpinors89} for a classification.
It is not known if any hypotheses on the internal space beyond Ricci flatness are necessary for stability to hold, as all known examples of compact Ricci-flat manifolds admit a spin cover with nonzero parallel spinors. The problem of constructing Ricci-flat manifolds including ones with non-special holonomy has been widely studied. A few relevant references on the topic are  \cite{MR3666569, MR1040196, MR1123371, MR3032330}. 

The spatial equivalent of the $\bMet$-wave gauge was used in the proof of Milne stability \cite{AnderssonMoncrief:Milne11}. This led to terms involving $\mL$ appearing in their PDEs, which were treated using Riemannian linear stability properties specific to the Milne spacetime.

\subsection{Cartesian, hyperbolic, and hyperbolic polar coordinates}

\begin{definition}[Minkowski space]
Let $n\geq1$ be an integer. Define Cartesian coordinates to be $(x^0,x^1,\ldots,x^n)$ $=(t,x^1,\ldots,x^n)$ $=(t,\vec{x})$ parameterising $\Reals^{1+n}$, and define 
\begin{align}
\eta_{\Reals^{1+n}} ={}& -\di t^2 +\sum_{i=1}^n (\di x^i)^2 . 
\end{align}
Define, for $i\in\{1,\ldots,n\}$, the translation vector fields $T$ and $X_i$ so that, in the Cartesian coordinates, they are given by
\begin{equation}\aligned
X_i ={}& \partial_{x^i}  ,\\
T ={} X_0 ={}& \partial_t .
\endaligned\end{equation}
Define, for $i,j\in\{0,\ldots,n\}$, the vector fields $Z_{ij}$ so that, in the Cartesian coordinates, they are given by
\begin{align}
Z_{ij}={}& (\eta_{\Reals^{1+n}})_{jk} x^k\partial_i -(\eta_{\Reals^{1+n}})_{ik}x^k\partial_j.
\end{align}
Define the collection of Lorentz generators by
\begin{align}
\LGen=\{ Z_{ij}, T, X_i\}.
\end{align}
Define $|x|^2=\sum_{i=1}^n(x^i)^2$ and define, in the region $t\geq|x|$, the hyperboloidal coordinates to be
\begin{equation}\aligned
s={}& (t^2-|x|^2)^{1/2} ,\\
y={}& x .
\endaligned \end{equation}
Define, for $i\in\{1,\ldots,n\}$,  the vector fields $Y_i$ so that, in the hyperboloidal coordinates, they are given by 
\begin{align}
Y_i ={}& \partial_{y^i} . 
\end{align}
For $s_0\geq 0$, define the spacelike hyperboloidal hypersurface 
\begin{align}
\Sigma_{s_0} = \{ (t,x)\in\Reals^{1+n}: t>0, s=s_0\}.
\end{align}
\end{definition}

Note that, because $(\eta_{\Reals^{1+n}})_{00}=-1$, $Z_{0i}={} t\partial_{x^i} +x_i\partial_t$. Furthermore the collection $\LGen$ is closed under commutation and forms a basis for the Poincar\'e Lie algebra. 

\begin{definition}[Spacetimes with a supersymmetric compactification]
On $\Reals^{1+n}\times\Compact$, define, for $i\in\{0,\ldots,n\}$, $X_i$ , $Y_i$, and $Z_{ij}$ to be as in $\Reals^{1+n}$. Let primed Roman letters denote spatial indices $i',j'\in\{1,\ldots,n+d+1\}$. 
Define the following collection of vector fields
\begin{align}
\Gamma
={}& Z \cup \{\cLap\} .
\end{align}
Note $[\LGen,\cLap]=0$.  
Define $\Naturals=\{0,1,2\ldots\}$. 
Define $\{Z_i\}_{i=1}^{(n+1)(n+2)/2}$ to be a reindexing of $\{X_i\}_{i=0}^n\cup\{Z_{ij}\}_{0\leq i<j\leq n}$, define a multi-index to be an ordered list of arbitrary length of elements from $\{1,\ldots,(n+1)(n+2)/2\}$, and for a multi-index $I=(i_1,\ldots,i_k)$ define the length $|I|=k$ and the differential operator $Z^I=Z_{i_k}\circ\ldots \circ Z_{i_1}$. 
For $I\in\Naturals$ and $u_{\mu\nu}$ a tensor defined on $\Reals^{1+n}\times\Compact$, define the following generalised multi-index notation
\begin{align*}
|\Gamma^I u|_E^2= \sum_{I_1:|I_1|+2j=|I|} |\LGen^{I_1}\cLap^j u|_E^2,
\end{align*}
where the sum is taken over all multi-indices $I_1
$ of length $|I_1|=k$ and integers $j$ such that $k+2j=|I|$. 
\end{definition}

\begin{definition}[Sobolev norms]
Let $u_{\mu\nu}$ be a tensor defined on $\Reals^{1+n}\times\Compact$ and $j\in\Naturals$. Define
\begin{align}
|\cCD^j u|^2_E=\cMet^{\ica_1\icb_1}\ldots\cMet^{\ica_j\icb_j} g_E^{\mu\nu} g_E^{\rho\sigma} (\cCD_{\ica_j}\ldots\cCD_{\ica_1} u_{\mu\rho}) (\cCD_{\icb_j}\ldots\cCD_{\icb_1} u_{\nu\sigma}).
\end{align} 
For $\ell\in\Naturals$ define the norms
\begin{align}
\|u(\cdot,\cdot,\omega)\|_{H^\ell(\Compact)}={}&\left(\int_\Compact \sum_{0\leq j\leq \ell}|\cCD^j u(\cdot,\cdot,\omega)|_E^2 \diCVol\right)^{1/2}, \\
\|u(t,x,\omega)\|_{L^2(\Sigma_s\times\Compact)}={}&\left(\int_{\Sigma_s\times\Compact} |u(t,x,\omega)|_E^2 \di x \diCVol\right)^{1/2},
\end{align}
where $\di x=\di x^1\ldots\di x^n$ is defined to be the flat Euclidean volume form.
\end{definition}

\begin{lemma}
\begin{align}
Y_i={}& X_i +\frac{x_i}{t} T ,\\
Z_{0i}={}& tY_i ,
\label{eq:ZExpandedAsY}\\
Z_{ij} ={}& y_i Y_j -y_j Y_i .
\end{align}
\end{lemma}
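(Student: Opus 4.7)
\medskip

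\noindent\textbf{Proof plan.} All three identities are coordinate-change computations, so the plan is simply to evaluate each side in Cartesian coordinates via the chain rule. The only data needed are the defining relations $s^2=t^2-|x|^2$, $y^i=x^i$, from which one obtains the inverse relation $t=(s^2+|y|^2)^{1/2}$ in the region $t\geq|x|$.

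\medskip

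\noindent First I would establish the identity $Y_i = X_i + (x_i/t) T$. Since $Y_i=\partial_{y^i}$ is the partial derivative at fixed $s$, the chain rule gives
\begin{equation*}
Y_i \;=\; \left.\frac{\partial t}{\partial y^i}\right|_s \partial_t + \left.\frac{\partial x^j}{\partial y^i}\right|_s \partial_{x^j}.
\end{equation*}
Differentiating $t=(s^2+|y|^2)^{1/2}$ at fixed $s$ yields $\partial t/\partial y^i = y_i/t = x_i/t$, while $\partial x^j/\partial y^i = \delta^j_i$ since $y^j=x^j$. Substituting gives the first identity.

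\medskip

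\noindent Next I would verify the second identity by expanding the definition of $Z_{0i}$. Using $(\eta_{\Reals^{1+n}})_{ik}x^k = x_i$ (since $i\geq 1$) and $(\eta_{\Reals^{1+n}})_{0k}x^k = -t$, the definition gives $Z_{0i} = x_i\partial_t + t\partial_{x^i} = x_i T + t X_i$. Factoring out $t$ and invoking the first identity yields $Z_{0i} = t(X_i + (x_i/t)T) = t Y_i$, which is \eqref{eq:ZExpandedAsY}.

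\medskip

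\noindent Finally, for $Z_{ij}$ with $i,j\in\{1,\ldots,n\}$, the definition reduces (again since $\eta$ restricts to the Euclidean metric on spatial indices) to $Z_{ij} = x_j X_i - x_i X_j$. Substituting the first identity to replace each $X_i$ by $Y_i - (x_i/t)T$, the terms proportional to $T$ cancel because the coefficient becomes the antisymmetric combination $x_j x_i - x_i x_j = 0$, leaving an expression purely in the $Y_i$ with coefficients $y_j=x_j$, $y_i=x_i$ giving the claimed formula (up to the sign conventions fixed in the statement). There is no substantive obstacle here; the only thing to be careful about is keeping track of the indices and the cancellation of the $T$-terms that shows the vector fields $Z_{ij}$ restricted to the hyperboloids involve no normal component, which is precisely why they provide a complete set of tangential Killing fields on $\Sigma_s$.
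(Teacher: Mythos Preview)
Your proposal is correct and follows essentially the same route as the paper: the chain rule on $t=(s^2+|y|^2)^{1/2}$ for the first identity, then substitution of that identity into the definitions of $Z_{0i}$ and $Z_{ij}$ (with the $T$-terms cancelling in the latter). Your caveat about sign conventions in the third identity is well placed, since the definition $Z_{ij}=(\eta)_{jk}x^k\partial_i-(\eta)_{ik}x^k\partial_j$ gives $x_jX_i-x_iX_j$ for spatial indices, which matches the stated formula only up to an overall sign.
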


\begin{proof}
Since $t=\sqrt{s^2+y^2}$, by the chain rule, for $j\in\{1,\ldots,n\}$, 
$\frac{\partial}{\partial y^j}$
$=\frac{\partial x^i}{\partial y^j} \frac{\partial}{\partial x^i}$
$=\frac{\partial}{\partial x^j} 
+\frac{\partial t}{\partial y^j}\frac{\partial}{\partial t}$
$=\frac{\partial}{\partial x^j} 
+\frac{y_j}{t}\frac{\partial}{\partial t}$, which gives the first result. The second follows from multiplying both sides of the first by $t$. The third follows from 
$Z_{ij}$
$=x_iX_j -x_jX_i$
$=x_i(X_j+x_jt^{-1}T) -x_j(X_i+x_it^{-1}T)$. 
\end{proof}

The following two lemmas relate the $t$ coordinate to the $s$ coordinate. 

\begin{lemma}
Let $s\geq 1$. Suppose $(t_0,x_0)\in\Sigma_s$ and $(t,x)\in\Sigma_s$ with $|x-x_0|\leq t_0/2$. In this case, $t_0/2\leq t\leq 2t_0$. 
\end{lemma}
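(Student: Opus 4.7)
The plan is to exploit the explicit relation $t = \sqrt{s^2 + |x|^2}$ defining the hyperboloid $\Sigma_s$, combined with the reverse triangle inequality, to compare $t$ and $t_0$ directly. Since both points lie on $\Sigma_s$, we have $t^2 = s^2 + |x|^2$ and $t_0^2 = s^2 + |x_0|^2$, which gives the clean identity
\begin{align*}
(t - t_0)(t + t_0) = |x|^2 - |x_0|^2 = (|x| - |x_0|)(|x| + |x_0|).
\end{align*}

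First I would observe that on $\Sigma_s$ with $t > 0$, we have $|x|^2 = t^2 - s^2 \leq t^2$ and hence $|x| \leq t$; likewise $|x_0| \leq t_0$. Therefore $|x| + |x_0| \leq t + t_0$, and dividing the factored identity above by $t + t_0$ yields
\begin{align*}
|t - t_0| = \frac{(|x| + |x_0|)}{t + t_0}\bigl||x| - |x_0|\bigr| \leq \bigl||x| - |x_0|\bigr|.
\end{align*}

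Next I would apply the reverse triangle inequality $\bigl||x| - |x_0|\bigr| \leq |x - x_0|$ together with the hypothesis $|x - x_0| \leq t_0/2$ to conclude $|t - t_0| \leq t_0/2$, which rearranges to $t_0/2 \leq t \leq 3t_0/2 \leq 2t_0$ as required.

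There is no real obstacle here; the argument is a few lines of elementary manipulation. The only mildly non-obvious ingredient is the cancellation of the factor $(|x|+|x_0|)/(t+t_0) \leq 1$, which removes any appearance of the sizes of $|x|, |x_0|$ and thus gives a bound independent of how close $(t_0,x_0)$ is to the light cone. Notably, the hypothesis $s \geq 1$ is not needed for the bound itself; it presumably enters in subsequent lemmas comparing $t$ and $s$ on $\Sigma_s$ in the interior of the light cone.
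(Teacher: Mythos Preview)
Your proof is correct and follows essentially the same approach as the paper: both establish the key inequality $|t-t_0|\leq |x-x_0|$ (the $1$-Lipschitz property of $t=\sqrt{s^2+|x|^2}$), from which $t_0/2\leq t\leq 3t_0/2\leq 2t_0$ follows immediately. The paper obtains this via the gradient bound $|\nabla_x t|=|x|/t\leq 1$, while you obtain it via the algebraic factorization $t^2-t_0^2=|x|^2-|x_0|^2$ together with $|x|+|x_0|\leq t+t_0$; these are two expressions of the same fact, and your observation that $s\geq 1$ is not actually used is also correct.
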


\begin{proof}
For the graph $t=\sqrt{s^2+|x|^2}$, the gradient 
$|\frac{\partial t}{\partial x}|$ $=|\frac{x}{\sqrt{s^2+|x|^2}}|$ $\leq 1$, so the change from $t$ to $t_0$ is less than the change in $|x|$ to $|x_0|$.
\end{proof}

\begin{lemma}
\label{lem:tBounds}
There is a constant $C>0$ such that for all $s>1$, in the portion of $\Sigma_s$ where $|x|\leq t-1$, one has $2t-1\leq s^2\leq t^2$.  
\end{lemma}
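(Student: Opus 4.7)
The plan is essentially a one-line computation from the definition $s^2=t^2-|x|^2$. I will observe that the upper bound $s^2\leq t^2$ is immediate because $|x|^2\geq 0$, so no constant $C$ is needed there. For the lower bound, I will use the hypothesis $|x|\leq t-1$ directly.

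First I would note that in $\Sigma_s$ with $s>1$, we have $t=\sqrt{s^2+|x|^2}\geq s>1$, so the condition $|x|\leq t-1$ is sensible (the right-hand side is positive). Then I would square the inequality to obtain
\begin{align*}
|x|^2\leq(t-1)^2=t^2-2t+1,
\end{align*}
and substitute into the definition of $s$ to get
\begin{align*}
s^2=t^2-|x|^2\geq t^2-(t^2-2t+1)=2t-1.
\end{align*}
Combined with $s^2\leq t^2$, this yields the claimed inequality with constant $C=1$.

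The only point where one might worry about a nontrivial obstacle is the interpretation of the constant $C$ in the statement; however, since both bounds follow with absolute constants, no uniformity issue arises, and the lemma holds verbatim with $C=1$.
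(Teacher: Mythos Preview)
Your proof is correct and essentially identical to the paper's: both derive $s^2\leq t^2$ from $|x|^2\geq 0$ and $s^2\geq 2t-1$ by squaring $|x|\leq t-1$. Your observation that the constant $C$ in the statement is superfluous (the inequalities hold with $C=1$) is also accurate; the paper's proof likewise makes no use of any nontrivial constant.
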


\begin{proof}
First, observe that $t^2=s^2+|x|^2\geq s^2$. Second, since $|x|^2\leq t^2-2t+1$, one has $s^2=t^2-|x|^2 \geq 2t-1 $. 
\end{proof}

The following are standard elliptic estimates, see for example \cite[\textsection Appx. H]{Besse}.

\begin{lemma}[Elliptic estimates on $(\Compact,\cMet)$]
\label{lem:CompactEllipticEst}
For $\ell\in\Naturals$ and $u_{\mu\nu}$ a sufficiently regular tensor defined on $\Reals^{1+n}\times\Compact$ there exist constants $c_1,c_2,c_3>0$ such that
\begin{align}
\| u \|_{H^{2\ell}(\Compact)} \leq c_1 \| (\cLap)^\ell u \|_{L^2(\Compact)} + c_2 \|u\|_{L^2(\Compact)}\leq c_3\| u \|_{H^{2\ell}(\Compact)}.
\end{align}
\end{lemma}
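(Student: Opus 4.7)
The plan is to reduce the claim to the classical elliptic regularity theory for self-adjoint elliptic operators on the compact Riemannian manifold $(\Compact,\cMet)$, applied pointwise in the external variables $(t,x)\in\Reals^{1+n}$. The operator $\cLap=\cMet^{\ica\icb}\cCD_\ica\cCD_\icb$ acts on the internal indices of the symmetric tensor $u_{\mu\nu}$, is formally self-adjoint on $L^2(\Compact)$ with respect to the $\langle\cdot,\cdot\rangle_\cMet$ inner product on those indices, and has principal symbol $|\xi|^2_\cMet\cdot\mathrm{Id}$, so it is a second-order elliptic operator with smooth coefficients on the closed manifold $\Compact$. Compactness yields a discrete spectrum $0\leq\lambda_0\leq\lambda_1\leq\cdots\to\infty$ with an $L^2$-orthonormal basis of smooth eigentensors; the $H^s(\Compact)$ norm is equivalent to $\bigl(\sum_j(1+\lambda_j)^s|u_j|^2\bigr)^{1/2}$.

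The upper bound $c_1\|\cLap^\ell u\|_{L^2(\Compact)}+c_2\|u\|_{L^2(\Compact)}\leq c_3\|u\|_{H^{2\ell}(\Compact)}$ is immediate: $\cLap^\ell$ is a differential operator of order $2\ell$ on $\Compact$ with smooth, hence uniformly bounded, coefficients, so $\|\cLap^\ell u\|_{L^2(\Compact)}\lesssim\|u\|_{H^{2\ell}(\Compact)}$, while $\|u\|_{L^2(\Compact)}\leq\|u\|_{H^{2\ell}(\Compact)}$ trivially. For the lower bound, first prove the base case $\ell=1$ via a Weitzenb\"ock/Bochner identity: integrating $\langle\cLap u,\cLap u\rangle_\cMet$ by parts twice produces $\|\cCD^2 u\|^2_{L^2(\Compact)}$ up to curvature terms of the form $\mathrm{Riem}[\cMet]\ast\cCD u\ast\cCD u$ and $\cCD\mathrm{Riem}[\cMet]\ast u\ast\cCD u$, which, because $|\mathrm{Riem}[\cMet]|$ and its covariant derivatives are bounded on the compact manifold, are absorbed via Cauchy--Schwarz and a small-constant/large-constant argument into $\|\cLap u\|^2_{L^2(\Compact)}+\|u\|^2_{L^2(\Compact)}$. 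Combined with the first-order estimate $\|\cCD u\|^2_{L^2(\Compact)}=\langle -\cLap u,u\rangle_{L^2(\Compact)}\leq\tfrac12(\|\cLap u\|^2_{L^2(\Compact)}+\|u\|^2_{L^2(\Compact)})$, this yields the $\ell=1$ statement. Equivalently, using the spectral decomposition, $\|u\|^2_{H^2(\Compact)}\sim\sum_j(1+\lambda_j)^2|u_j|^2\leq 2\sum_j(\lambda_j^2+1)|u_j|^2=2(\|\cLap u\|^2_{L^2(\Compact)}+\|u\|^2_{L^2(\Compact)})$.

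To pass to general $\ell$, I would use the spectral characterization directly: $\|u\|^2_{H^{2\ell}(\Compact)}\sim\sum_j(1+\lambda_j)^{2\ell}|u_j|^2$, and the elementary inequality $(1+\lambda_j)^{2\ell}\leq C_\ell(1+\lambda_j^{2\ell})$ gives $\|u\|^2_{H^{2\ell}(\Compact)}\leq C_\ell(\|\cLap^\ell u\|^2_{L^2(\Compact)}+\|u\|^2_{L^2(\Compact)})$. Alternatively one may iterate: assuming the inequality at level $\ell-1$, apply the base case to $\cLap^{\ell-1}u$ to bound $\|\cLap^{\ell-1}u\|_{H^2(\Compact)}$ by $\|\cLap^\ell u\|_{L^2(\Compact)}+\|\cLap^{\ell-1}u\|_{L^2(\Compact)}$, and interpolate the intermediate Sobolev norms using the inductive hypothesis and Young's inequality. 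Finally, $u$ is tensor-valued on $\Reals^{1+n}\times\Compact$, but since $\cLap$ only differentiates in the $\Compact$ directions and the $g_E$-pairing agrees with $\cMet$-pairing on internal indices while treating external indices as labels, the estimate reduces, for each fixed $(t,x)$, to finitely many scalar (and $(0,2)$-tensor) elliptic estimates on $\Compact$ of the type just described.

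The only mild obstacle is the bookkeeping of curvature correction terms in the Weitzenb\"ock identity at the base case; these are manifestly controllable because $(\Compact,\cMet)$ is smooth and compact, so all geometric quantities and their covariant derivatives of any order are uniformly bounded, and no structural hypothesis on $\Compact$ beyond compactness and smoothness is used in this lemma.
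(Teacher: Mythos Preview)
The paper does not actually prove this lemma: it is stated as a standard elliptic estimate with a reference to Besse's \emph{Einstein Manifolds}, Appendix~H, and no argument is given. Your proposal is correct and supplies considerably more detail than the paper does, following the standard route (Bochner/Weitzenb\"ock identity for the base case $\ell=1$, then induction or spectral decomposition for higher $\ell$, with the reduction over external indices handled at the end).

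One small point worth tightening: the equivalence $\|u\|_{H^{2\ell}(\Compact)}\sim\bigl(\sum_j(1+\lambda_j)^{2\ell}|u_j|^2\bigr)^{1/2}$ is essentially the statement of the lemma, so invoking it as a black box would be circular. You avoid this by first establishing the $\ell=1$ case via the Weitzenb\"ock computation, after which the spectral norm equivalence is justified and the general-$\ell$ argument (either spectral or inductive) goes through. Making that logical order explicit would remove any appearance of circularity.
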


\subsection{The Einstein equations}
The theory of the Einstein equations is well known. In this section, we review this theory, for the sake of providing a self-contained presentation in this paper, and in particular to provide a self-contained statement of our main theorem \ref{thm:MainResult}.

\begin{definition}[Geometric initial data set]
\label{def:initialDataSet}
Let $m\in\Naturals^+$. An $m$-dimensional initial data set is defined to be a triple $(\Sigma,\initialMetric,\initialIIFF)$ such that $\Sigma$ is an $m$-dimensional manifold, $\initialMetric_{i'j'}$ is a Riemannian metric on $\Sigma$, $\initialIIFF_{i'j'}$ is a symmetric $2$-tensor on $\Sigma$, and the following equations (the constraint equations) are satisfied: 
\begin{equation}
R[\initialMetric] - |\initialIIFF|^2+(tr(\initialIIFF))^2=0, \quad
\nabla[\initialMetric]_{i'} tr(\initialIIFF)-\nabla[\initialMetric]^{j'} (\initialIIFF)_{i'j'} = 0 ,
\label{eq:Constraints}
\end{equation} 
where $tr(\initialIIFF)=\initialMetric^{i'j'}\initialIIFF_{i'j'}$.
\end{definition}

\begin{definition}[Solution of the Einstein equations with specified initial data]
\label{def:solutionOfEinsteinEquationWithData}
Let $\mathcal{M}$ be a manifold. A Lorentzian metric $g$ on $\mathcal{M}$ is defined to be a solution of the vacuum Einstein equations iff its Ricci curvature vanishes, 
\begin{align}
\label{eq:VacuumEinsteinEq}
\mathop{Ric}[g]_{\mu\nu}=0.  
\end{align}

Let $(\Sigma,\initialMetric,\initialIIFF)$ be a geometric initial data set. A solution to  the (geometric) Einstein equations with initial data $(\Sigma,\initialMetric,\initialIIFF)$ is defined to be a Lorentzian metric $g$ on $I\times\Sigma$ for some interval $I$ where one has:
$0\in I$, 
$g$ is a solution of the Einstein equations \eqref{eq:VacuumEinsteinEq}, 
$\{0\}\times \Sigma$ and $g$ restricted to vectors in $T(\{0\}\times \Sigma)$ is isometric in the category of Riemannian manifolds to $(\Sigma,\initialMetric)$, and, with the identification given by this isometry, the second fundamental form of the embedding of $\{0\}\times\Sigma$ into $I\times\Sigma$ is $\initialIIFF$. 
\end{definition}

In the previous definition, for convenience, we have required that the initial data be specified at $t=0$. This may initially appear more restrictive than definitions that are stated in other sources. However, because of the freedom to introduce new coordinate systems on the manifold $I\times\Sigma$, it is actually equivalent to definitions that allow initial data to specified at other values of $t$ or more general spacelike hypersurfaces.

\subsection{The reduced Einstein equations.}
\label{sec:ReducedEE}
To obtain a well-posed evolution problem for the Einstein equations we choose a gauge with respect to a fixed Lorentzian metric $\refMet_{\mu\nu}$ defined on $\mathcal{M}$. 

\begin{definition}[$\refMet$-wave gauge]
\label{def:WaveGauge}
For Lorentzian metrics $g$ and $\refMet$ defined on some manifold $\mathcal{M}$, let $\nabla[g]$ and $\refCD$ be the Levi-Civita connections with corresponding Christoffel symbols $\Gamma[g]$ and $\Gamma[\refMet]$ in local coordinates. Define the vector field $V^\ic$ in local coordinates by
\begin{align}
V^\ic = g^{\ia \ib}(\Gamma^\ic_{\ia\ib}[g]-\Gamma^\ic_{\ia\ib}[\refMet])\,.
\end{align}
Define also $V_\lambda = g_{\lambda\ib}V^\ib$.
The $\refMet$-wave gauge condition is given by 
\begin{align} \label{eq:WaveGauge}
V^\ic=0\,.
\end{align}
\end{definition}

Recall that the difference of two Christoffel symbols is a tensor, and so $V^\ic$ is in fact a well-defined vector field on $\mathcal{M}$.

\begin{definition}[The reduced Einstein equations]
Let $\mathcal{M}$ be a manifold with Lorentzian metric $\refMet$. A Lorentzian metric $g$ on $\mathcal{M}$ is defined to be a solution of the reduced Einstein equations iff
\begin{subequations} \label{eq:ReducedEinsteinEqs}
\begin{equation}\aligned
&g^{\ia\ib} \refCD_\ia \refCD_\ib g_{\mu \nu} - g^{\ic\id}\left( g_{\mu\lambda}\refMet^{\lambda\rho}\Riem[\refMet]_{\rho\ic\nu\id} + g_{\nu\lambda}\refMet^{\lambda\rho}\Riem[\refMet]_{\rho\ic\mu\id}\right) \\
&\qquad = Q_{\mu\nu}[g](\refCD g,\refCD g),
\endaligned\end{equation}
where we have defined
\begin{equation}\aligned\label{def:nonlinQ}
& Q_{\mu\nu}[g](\refCD g, \refCD g)=g^{\ic \id}g^{\ia\ib} \Big(\refCD_\nu g_{\id\ib} \refCD_\ia g_{\mu\ic} + \refCD_\mu g_{\ic\ia} \refCD_\ib g_{\nu\id} \\
&\quad -\frac12 \refCD_\nu g_{\id\ib} \refCD_\mu g_{\ic\ia} + \refCD_\ic g_{\mu\ia} \refCD_\id g_{\nu\ib} - \refCD_\ic g_{\mu\ia} \refCD_\ib g_{\nu\id} \Big).
\endaligned\end{equation}
\end{subequations}
\end{definition}

\subsection{The higher-dimensional Schwarzschild spacetime}\label{sec:HighDimSchwarz}

In this subsection, the higher-dimensional Schwarzschild solution is considered and its relationship to the initial data for the Einstein equations \eqref{eq:VacuumEinsteinEq} and the reduced Einstein equations \eqref{eq:ReducedEinsteinEqs} is discussed. The form of the metric is presented in the following definition. 

\begin{definition}
\label{def:higherDimensionalSchwarzschild}
Let $n\in\Integers$ be such that $n\geq 5$ and $\SchwarzschildMass\in[0,\infty)$. The Schwarzschild metric (in Schwarzschild coordinates) is defined for $(t,\bar{r},\omega)\in\Reals\times(\SchwarzschildMass^{1/(n-2)},\infty)\times S^{n-1}$ to be
\begin{align}
g_{S}={}&-\left(1-\frac{\SchwarzschildMass}{\bar{r}^{n-2}}\right)\di t^2
+\left(1-\frac{\SchwarzschildMass}{\bar{r}^{n-2}}\right)^{-1} \di \bar{r}^2 
+\bar{r}^2 \sigma_{S^{n-1}} .
\label{eq:higherDimensionalSchwarzschild}
\end{align}
\end{definition}

The above metric can also be written in the wave gauge. For $n=3$, it is sufficient to replace $(t,\bar{r},\omega)\in\Reals\times(\SchwarzschildMass^{1/(n-2)},\infty)\times S^{n-1}$ by $(t,x)=(t,r\omega)$ with $r=\bar{r}-M$; the resulting explicit metric can be found in \cite{LindbladRodnianski:StabilityAgainstCompactPerturbations,LeFlochMa}. Although the case $n=4$ leads to complicated terms involving logarithms, for $n\geq 5$, there is the following theorem. 

\begin{theorem}[{\cite[Section 5.2]{ChoquetBruhatChruscielLoizelet}}]
\label{thm:higherDimensionalSchwarzschildExistsInWaveCoordinates}
Let $n\in\Integers$ be such that $n\geq5$ and $\SchwarzschildMass\in[0,\infty)$. 
There are coordinates $(t,x)$ related to those in definition \ref{def:higherDimensionalSchwarzschild} by $(x^i)_{i=0}^n=(t,r(\bar{r})\omega)$ with 
\begin{align*}
r(\bar{r})
={} \bar{r}-\frac{\SchwarzschildMass}{2\bar{r}^{n-3}} +O(\bar{r}^{5-2n}),
\end{align*}
such that $(x^i)_{i=0}^n$ satisfy the harmonic gauge, that is, the $\eta_{\Reals^{1+n}}$-wave gauge. Furthermore, there exist functions $h_{00}(R)$, $h(R)$, and $\hat{h}(R)$, defined on an interval around $R=0$, that are analytic and bounded by a multiple of $C_S$ near $R=0$, and such that 
\begin{align}\label{eq:SchwarzschildHarmonicCoords}
g_{S}
={}&-\left(1 -\frac{h_{00}(r^{-1})}{r^{n-2}}\right)(\di x^0)^2
+\sum_{i,j=1}^n \left[\left(1 +\frac{h(r^{-1})}{r^{n-2}}\right)\delta^{ij}+\frac{\hat{h}(r^{-1})}{r^{n-2}}\frac{x^ix^j}{r^2}\right]\di x^i\di x^j .
\end{align}
In particular, the difference between the components of $g_{S}$ with respect to the harmonic coordinates and the corresponding components of the Minkowski metric are such that any $\partial^I$ derivative decays at least as fast as $C_S r^{-(n-2)-|I|}$. 
\end{theorem}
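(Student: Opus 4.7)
The plan is to construct the harmonic coordinates explicitly, exploiting the staticity and spherical symmetry of $g_S$. Taking $x^i=r(\bar r)\,\omega^i$ with $\omega\in S^{n-1}\subset\Reals^n$, I seek a radial reparametrisation $r(\bar r)$ such that the coordinate functions $t$ and $x^1,\ldots,x^n$ satisfy $\Box_{g_S}x^\mu=0$; by definition \ref{def:WaveGauge}, this is equivalent to the $\eta_{\Reals^{1+n}}$-wave gauge. The function $t$ is automatically harmonic because $g_S$ is diagonal and $t$-independent. For $x^i=r\omega^i$, the diagonal structure of $g_S$ kills the cross term $g^{\mu\nu}\partial_\mu r\,\partial_\nu\omega^i$, so $\Box_{g_S}(r\omega^i)=\omega^i\,\Box_{g_S}r+r\,\Box_{g_S}\omega^i$. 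Since $\omega^i|_{S^{n-1}}$ is a degree-one spherical harmonic, $\Box_{g_S}\omega^i=-(n-1)\bar r^{-2}\omega^i$, and the harmonicity of the $x^i$ reduces to the single scalar ODE
\[
\bigl(\bar r^{n-1}A(\bar r)\,r'(\bar r)\bigr)'=(n-1)\bar r^{n-3}\,r(\bar r),\qquad A(\bar r)=1-\SchwarzschildMass\bar r^{-(n-2)}.
\]

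I would solve this ODE by expansion in $R=1/\bar r$ with the ansatz $r(\bar r)=\bar r+\sum_{k\geq1}c_k\bar r^{1-k(n-2)}$. Direct substitution produces a triangular recursion of the form
\[
(n-2)\,k\bigl[(k-1)(n-2)-2\bigr]\,c_k=F_k\bigl(c_1,\ldots,c_{k-1};\SchwarzschildMass\bigr),
\]
whose left-hand coefficient is nonzero for every $k\geq1$ precisely when $n\geq 5$; the vanishing at $n=4$, $k=2$ is the resonance responsible for the logarithmic corrections alluded to in the excerpt. This determines the $c_k$ uniquely, and a standard analytic-ODE argument of Briot--Bouquet type at the regular singular point $R=0$ promotes the formal series to a convergent one, yielding $\bar r(r)$ as an analytic function of $R=1/r$ near zero with leading behaviour $r=\bar r-\tfrac{\SchwarzschildMass}{2(n-2)}\bar r^{3-n}+O(\bar r^{5-2n})$.

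To derive \eqref{eq:SchwarzschildHarmonicCoords} I pull $g_S$ back to the new coordinates using $\sum_i(dx^i)^2=dr^2+r^2\sigma_{S^{n-1}}$ and $\bigl(\sum_ix^i\,dx^i\bigr)^2=r^2\,dr^2$. Matching the resulting metric with $-A\,dt^2+A^{-1}d\bar r^2+\bar r^2\sigma_{S^{n-1}}$ yields closed-form expressions
\[
h_{00}=\SchwarzschildMass\bigl(r/\bar r\bigr)^{n-2},\qquad h=\bigl((\bar r/r)^2-1\bigr)r^{n-2},\qquad \hat h=\Bigl(A^{-1}(d\bar r/dr)^2-(\bar r/r)^2\Bigr)r^{n-2},
\]
each of which, thanks to the analyticity of $\bar r(r)$ in $R$ established above, is analytic near $R=0$ and bounded there by a multiple of $\SchwarzschildMass$. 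The pointwise derivative bound $|\partial^I(g_S-\eta_{\Reals^{1+n}})|\lesssim\SchwarzschildMass\,r^{-(n-2)-|I|}$ then follows from the chain rule applied to these closed forms. The principal obstacle is the second step: establishing genuine convergence of the formal series and showing that the induced coordinate change is a diffeomorphism onto a neighbourhood of spatial infinity. This is also where the hypothesis $n\geq5$ is essential, since the resonance at $n=4$ destroys the analyticity in $R$ on which \eqref{eq:SchwarzschildHarmonicCoords} relies.
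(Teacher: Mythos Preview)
The paper does not give its own proof of this theorem; it is stated with a citation to \cite[Section~5.2]{ChoquetBruhatChruscielLoizelet} and used as a black box. Your sketch is a correct outline of how that result is established and is, as far as I can tell, essentially the argument in the cited reference: reduce the harmonic-coordinate condition to the scalar ODE $(\bar r^{\,n-1}A\,r')'=(n-1)\bar r^{\,n-3}r$ via the spherical-harmonic identity $\Delta_{S^{n-1}}\omega^i=-(n-1)\omega^i$, solve it by a power-series ansatz in $\bar r^{-(n-2)}$, observe that the indicial coefficient $(n-2)k[(k-1)(n-2)-2]$ is nonvanishing for all $k\ge 1$ precisely when $n\ge 5$, and invoke a Briot--Bouquet/Fuchsian argument at the regular singular point $R=0$ to upgrade the formal series to an analytic solution. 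Your computation of the pulled-back metric and the closed forms for $h_{00}$, $h$, $\hat h$ is also correct.

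One small point worth flagging: your leading coefficient $c_1=-\tfrac{\SchwarzschildMass}{2(n-2)}$ is what actually comes out of the ODE (matching the $\bar r^{\,0}$ terms gives $c_1(4-2n)=\SchwarzschildMass$), whereas the statement in the paper records $-\tfrac{\SchwarzschildMass}{2}$. This looks like a typo in the paper's transcription of the result; your value agrees with \cite{ChoquetBruhatChruscielLoizelet}. It has no bearing on the rest of the paper, since only the qualitative $O(\SchwarzschildMass\,r^{-(n-2)})$ decay is used downstream.
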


Note a result in \cite{Dai:SUSYPMT} ensures that $\SchwarzschildMass\geq0$ for the spacetimes of interest in our main theorem \ref{thm:MainResult}.

\section{Sobolev estimates on hyperboloids}
\label{sec:Sobolev}
We begin in lemma \ref{lem:HormanderWeightedSobolev} by recalling H\"ormander's proof of a Sobolev estimate on hyperboloids. This allows us  to introduce some of the key ideas that appear in our proof of the main result of this section, lemma \ref{lem:SobolevOnHyperboloidsInSUSY}. 

\begin{lemma}[Sobolev estimate for compactly supported functions on hyperboloids in Minkowski space {\cite[Lemma 7.6.1]{Hormander}}]
\label{lem:HormanderWeightedSobolev}
Let $\nu$ be the smallest integer greater than $n/2$ and $v\in C^\nu(\Reals^{1+n})$ have support in $|x|<t-1$. There is a constant $C$ such that
\begin{align}
\sup_{\Sigma_s} t^n |v(t,x)|^2 
\leq C \sum_{|I|\leq \nu} \int_{\Sigma_s} |\LGen^I v|^2 \di x . 
\end{align}
\end{lemma}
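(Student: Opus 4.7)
The strategy is to reduce to the standard Sobolev embedding on a unit ball in $\Reals^n$ by viewing $\Sigma_s$ as a graph in the $y=x$ coordinates, applying a rescaling centered at an arbitrary point, and exploiting the fact that on $\Sigma_s$ one has $Z_{0i} = t Y_i$ with $Y_i = \partial_{y^i}$. The $t$-weight in the conclusion arises naturally from the Jacobian of this rescaling.

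Fix $(t_0,x_0)\in\Sigma_s$ with $|x_0|<t_0-1$; we want to bound $t_0^n|v(t_0,x_0)|^2$. Parametrize $\Sigma_s$ by $y\in\Reals^n$ via $t=\sqrt{s^2+|y|^2}$, so the measure $\di x$ on $\Sigma_s$ becomes $\di y$, and $Y_i=\partial_{y^i}$. Introduce the rescaled variable $z=(y-x_0)/t_0$ and set $\tilde v(z) = v(t(x_0+t_0 z),x_0+t_0 z)$ on the unit ball $|z|\le 1/2$. By the standard Sobolev embedding in $\Reals^n$ (with $\nu$ the smallest integer greater than $n/2$),
\begin{align*}
|v(t_0,x_0)|^2 = |\tilde v(0)|^2 \le C \sum_{|J|\le\nu} \int_{|z|\le 1/2} |\partial_z^J \tilde v|^2\,\di z.
\end{align*}
Changing variables back to $y$ produces a factor $t_0^n$ from $\di z = t_0^{-n}\di y$ and factors $t_0^{-|J|}$ from each $\partial_z = t_0\,\partial_y$, yielding
\begin{align*}
t_0^n |v(t_0,x_0)|^2 \le C \sum_{|J|\le\nu} t_0^{2|J|} \int_{|y-x_0|\le t_0/2} |\partial_y^J v|^2\,\di y.
\end{align*}

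To convert the $\partial_y$ derivatives into $\LGen$ derivatives, I use the identity $Z_{0i}=tY_i$ from \eqref{eq:ZExpandedAsY}. On the ball $|y-x_0|\le t_0/2\cap\Sigma_s$, the preceding lemma in the excerpt gives $t_0/2 \le t \le 2t_0$, so $t_0^{|J|}$ is comparable to $t^{|J|}$ and each factor $t_0\partial_{y^i}$ can be replaced, up to lower-order commutator terms which are absorbed into $\LGen$-derivatives of order $\le|J|$, by $Z_{0i}$. This gives
\begin{align*}
t_0^n |v(t_0,x_0)|^2 \le C \sum_{|I|\le\nu} \int_{|y-x_0|\le t_0/2} |\LGen^I v|^2\,\di y \le C \sum_{|I|\le\nu} \int_{\Sigma_s} |\LGen^I v|^2\,\di x,
\end{align*}
and taking the supremum over $(t_0,x_0)$ with $|x_0|<t_0-1$ on $\Sigma_s$ yields the claim; the support condition $|x|<t-1$ is what ensures that the center point and the rescaled ball remain in the interior of the forward cone where the relation between $t$ and $y$ is well-behaved.

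The only subtle step is the passage from $t_0^{|J|}\partial_y^J$ to $\LGen^J$: commuting the vector fields $Z_{0i}=tY_i$ through each other generates factors like $\partial_t (t) = 1$ and polynomial expressions in $t,y$, but these are always of order at most that being replaced and can be re-expressed as sums of $\LGen^{I'}v$ with $|I'|\le|J|$ using that $\LGen$ is closed under commutation. This is routine but the bookkeeping is the one place where care is needed; it is also the reason the conclusion is stated with a sum over $|I|\le\nu$ rather than only $|I|=\nu$.
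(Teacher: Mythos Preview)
Your proof is correct and follows essentially the same approach as the paper's: fix a point $(t_0,x_0)$, rescale a ball of radius comparable to $t_0$ to unit size, apply the standard Sobolev embedding $H^\nu\hookrightarrow L^\infty$, and then trade $t_0\partial_{y^i}$ for $Z_{0i}=tY_i$ using $t\sim t_0$ on that ball. The paper presents the same chain of inequalities in the reverse order (bounding the right-hand side of the lemma from below) and inserts a cutoff $\chi$ before applying Sobolev, but these are cosmetic differences.
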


\begin{proof}
Consider a point $(t_0,x_0)\in \Sigma_s$ with $|x_0|^2\leq t_0^2-1$. Set $r_0=t_0/2$ and $y_0=x_0$. Set $\Sigma$ to be the portion of $\Sigma_s$ on which $|x-x_0|\leq r_0$. Let $(t,x)\in\Sigma$. This implies $|t-t_0|\leq r_0$, which implies $t/2\leq t_0\leq 2t$. Thus, 
\begin{align*}
\sum_{|I|\leq \nu} \int_{\Sigma_s} |\LGen^I v(t,x)|^2 \di x 
\geq{}& C\sum_{|I|\leq \nu} \int_{\Sigma_s} |t_0^{|I|}Y^I v(t,y)|^2 \di y .
\end{align*}
The right can be rewritten, by introducing rescaled coordinates $\tilde{y}=2t_0^{-1}(y-y_0)$ and $\tilde{v}(\tilde{y})=v(t,y)$. One can now decompose the portion of $\Sigma_s$ where $|x|\leq t-1$ into many subregions where $t$ does not vary by more than a factor of $2$. Let $\chi(\tilde{y})$ be a smooth cut-off such that $\chi$ is $1$ on a neighbourhood of $0$ and is $0$ for $|\tilde{y}|\geq 1/2$, it can further be bounded from below. A Sobolev estimate can then be applied to give a further lower bound on $v$. Combining these yields
\begin{align*}
\sum_{|I|\leq \nu} \int_{\Sigma_s} |t_0^{|I|}Y^I v(t,x)|^2 \di y 
={}& \sum_{|I|\leq \nu} \int_{|\tilde{y}|\leq 1} |\partial_{\tilde{y}}^I \tilde{v}(\tilde{y})|^2 t_0^n \di \tilde{y} \\
\geq{}& C t_0^n \sum_{|I|\leq \nu} \int_{|\tilde{y}|\leq 1} |\partial_{\tilde{y}}^I ((\chi \tilde{v})(\tilde{y}))|^2  \di \tilde{y} \\
\geq{}& C t_0^n |\tilde{v}(0)|^2\\
={}& C t_0^n |v(t_0,x_0)|^2 ,
\end{align*}
which completes the proof. 
\end{proof}

In the following lemma we obtain a Sobolev estimate for functions supported on product spacetimes with specified properties outside a compact set. In particular we obtain a pointwise estimate \eqref{eq:SobolevOnHyperboloidsInSUSYsDecay} in terms of the hyperboloidal time $s$, as well as a $t$-weighted pointwise estimate on a fixed hyperboloid \eqref{eq:SobolevOnHyperboloidsInSUSYtDecay}.

\begin{lemma}[Sobolev estimate for eventually prescribed functions on hyperboloids foliating product spacetimes]
\label{lem:SobolevOnHyperboloidsInSUSY}
Let $n\geq 4$, let $\dRegIndex$ be the smallest even integer larger than $d/2 $ and let $\tnu$ be the smallest integer greater than $ n/2 +\dRegIndex$. Let $u_{\mu\nu},f_{\mu\nu}$ be tensors on $\Reals^{1+n}\times\Compact$ with $f$ depending only the Minkowski coordinates $x^i$. Let $u\in C^{\tnu}(\Reals^{1+n}\times\Compact)$ satisfy $u=f$ for $|x|\geq t-1$. Let $f\in C^\infty(\Reals^{1+n}\times\Compact)$ be smooth and such that for all $I\in\Naturals$, there is a $C_I$ such that\footnote{The exponent on $f$ is set to match that corresponding to the exponent arising from the pointwise estimate \eqref{eq:SobolevOnHyperboloidsInSUSYsDecay} on $u$ in the region $|t-r|\leq C$. The limiting factor on the exponent in \eqref{eq:SobolevOnHyperboloidsInSUSYsDecay} arises from estimates on the hyperboloid, not from the decay of the prescribed function $f$. If a faster decay rate $t^{-\beta}$ could be proved (using similar methods) on hyperboloids for compact data, then a similar $t^{-\beta}$ decay could be proved for prescribed functions satisfying $f\leq r^{-\beta}$.} 
\begin{align}\label{eq:DecayAssumptionsSobolevInSUSY}
|\bCD^I f|_E\leq C_{|I|}|x|^{-(n-1)/2-|I|}.
\end{align}
Let $ \decayRate = \frac{n-2}{4}$. 
There is a constant $C$ such that,
\begin{equation}\aligned
\sup_{(t,x^i,\omega)\in\Sigma_s\times\Compact} s^{4\decayRate} |u(t,x^i,\omega)|_E^2 
\leq{}& C \sum_{|I|\leq \tnu} \sum_{i=1}^n  \int_{\underset{|x|\leq t-1}{\Sigma_s\times\Compact}} |Y_i \LGen^I u|_E^2 \di x \diCVol \\
{}&+ C\sum_{|I|\leq \tnu-1} C_I^2.  
\label{eq:SobolevOnHyperboloidsInSUSYsDecay}
\endaligned \end{equation}
Furthermore there is a constant C such that,
\begin{equation}\aligned
\sup_{(t,x^i,\omega)\in\Sigma_s\times\Compact} t^{2\decayRate} |u(t,x^i,\omega)|_E^2 
\leq{}& C \sum_{|I|\leq \tnu} \sum_{i=1}^n  \int_{\underset{|x|\leq t-1}{\Sigma_s\times\Compact}} |Y_i \LGen^I u|_E^2 \di x \diCVol \\
{}&+ C\sum_{|I|\leq \tnu-1} C_I^2.  
\label{eq:SobolevOnHyperboloidsInSUSYtDecay}
\endaligned \end{equation}
\end{lemma}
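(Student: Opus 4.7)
The plan is to adapt the rescaling argument of Lemma~\ref{lem:HormanderWeightedSobolev} to the product setting $\Reals^{1+n}\times\Compact$, with one twist: because the right-hand sides of \eqref{eq:SobolevOnHyperboloidsInSUSYsDecay} and \eqref{eq:SobolevOnHyperboloidsInSUSYtDecay} involve $Y_i\LGen^I u$ rather than $\LGen^I u$, and since $Y_i = t^{-1}Z_{0i}$ carries one less factor of $t$ than a generic Lorentz generator, the pointwise bound emerges with the weaker weight $t_0^{n-2}$ instead of H\"ormander's $t_0^n$. I first partition $\Sigma_s$ into the exterior $\{|x|\geq t-1\}$, where $u=f$ is fixed, and the interior $\{|x|\leq t-1\}$; the latter is characterised on the hyperboloid by $2|x|+1\leq s^2$ and is therefore empty for $s\leq 1$, so in that regime everything reduces to a pointwise bound on $f$.

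On the exterior, $|x|\geq t-1$ directly gives $s^2=t^2-|x|^2\leq 2t-1$, while $|x|\geq t/2$ for $t\geq 2$. The decay hypothesis \eqref{eq:DecayAssumptionsSobolevInSUSY} then yields
\[
s^{n-2}|f|_E^2 \lesssim C_0^2\, t^{(n-2)/2}\, t^{-(n-1)} = C_0^2\, t^{-n/2},\qquad t^{(n-2)/2}|f|_E^2 \lesssim C_0^2\, t^{-n/2},
\]
both uniformly bounded for $t\geq 1$, the small-$t$ piece being handled by smoothness of $f$. These contributions are absorbed into the $C\sum_{|I|\leq\tnu-1}C_I^2$ terms.

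For the interior, I fix $(t_0,x_0,\omega_0)$ with $|x_0|\leq t_0-1$ (hence $t_0\geq 1$) and work with $v := u-f$, which vanishes on $\{|x|\geq t-1\}$. Rescaling $\tilde y = 2t_0^{-1}(y-x_0)$ on the ball $|y-x_0|\leq t_0/2$ (on which $t\sim t_0$), introducing a cutoff $\chi(\tilde y)$ supported in $|\tilde y|\leq 1$, and applying the Sobolev embedding $H^{\tnu}(B\times\Compact)\hookrightarrow L^\infty$---valid because $\tnu>n/2+\dRegIndex\geq(n+d)/2$, with $\Compact$-derivatives supplied via $\cLap^j$ for $2j\leq\dRegIndex$ together with the elliptic estimate of Lemma~\ref{lem:CompactEllipticEst}---I obtain a pointwise bound on $v(t_0,x_0,\omega_0)$ in terms of rescaled $L^2$-norms of derivatives of $v$. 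After unscaling, each $\partial_{\tilde y}$ becomes $(t_0/2)Y_i$ and the Jacobian contributes $(t_0/2)^n$. To produce the asserted form $Y_i\LGen^I u$, I then use $tY_i = Z_{0i}$ to convert all but one of the $Y$-factors into Lorentz generators, each conversion costing one factor of $t_0^{-1}$ plus lower-order commutators. The zero-derivative Sobolev term is handled via a Poincar\'e inequality, exploiting the support of $v$ inside the ball to replace $\|v\|_{L^2(B)}$ by $\lesssim t_0\|Yv\|_{L^2(B)}$. The outcome is
\[
t_0^{n-2}|v(t_0,x_0,\omega_0)|_E^2 \lesssim \sum_{|I|\leq\tnu}\sum_i \int_{B\times\Compact}|Y_i\LGen^I v|_E^2\,dy\,\diCVol,
\]
and combining with $|u|_E^2\leq 2|v|_E^2+2|f|_E^2$ and the exterior bound on $f$ gives the claim.

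Since $s\leq t_0$ and $t_0\geq 1$ on the interior, $s^{n-2}\leq t_0^{n-2}$ and $t_0^{(n-2)/2}\leq t_0^{n-2}$, so both \eqref{eq:SobolevOnHyperboloidsInSUSYsDecay} and \eqref{eq:SobolevOnHyperboloidsInSUSYtDecay} follow from the single $t_0^{n-2}$ bound. The hardest step I foresee is the commutator bookkeeping when converting $Y^{I_1}$ into $Y_i\LGen^I$: although $tY_i = Z_{0i}$ is exact, the relation $[Y_i,t] = y_i/t\neq 0$ means iterated conversions produce correction terms whose integrated contributions must be shown to be absorbable into lower-order $|Y_j\LGen^J u|$ terms without spoiling the $t^{n-2}$ weight. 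The cross-terms $|Y_i\LGen^I f|$ introduced by $v = u-f$ must also be shown to be controlled by $\sum C_I^2$, which will follow because on the rescaled ball $|x|\sim t_0$, so $|\LGen^I f|\lesssim t_0^{|I|}|x|^{-(n-1)/2-|I|}\sim |x|^{-(n-1)/2}$.
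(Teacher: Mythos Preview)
Your overall architecture (exterior/interior split, handling the exterior directly via the decay of $f$, rescaling plus Sobolev on the interior) matches the paper, but the execution of the interior estimate has a real gap at the Poincar\'e step.

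You claim that the zero-derivative Sobolev term $\|v\|_{L^2(B)}$ can be traded for $t_0\|Yv\|_{L^2(B)}$ ``exploiting the support of $v$ inside the ball''. But $v=u-f$ is supported in $\{|x|\leq t-1\}$, which on $\Sigma_s$ is the set $\{|x|\leq t_{\max}-1\}$ with $t_{\max}=(s^2+1)/2$; this region is \emph{not} contained in your local ball $B=B(x_0,t_0/2)$. Concretely, take $x_0=0$, so $t_0=s$: then $B$ has radius $s/2$ while $v$ is supported out to radius $\sim s^2/2$, so $v$ need not vanish anywhere on $\partial B$ and the Poincar\'e inequality with constant $t_0$ is simply false. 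Using Poincar\'e on the full support instead gives constant $\sim t_{\max}\sim s^2$, which wrecks the $t_0^{n-2}$ weight. This is exactly why the paper does \emph{not} follow H\"ormander's local $t_0$-rescaling here.

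The paper's fix is to rescale \emph{globally} by $s$ (the minimum of $t$ over the interior), set $\tilde y=s^{-1}y$, and apply on all of $\Reals^n$ the Sobolev inequality
\[
\sup|\tilde v|_E^2\;\lesssim\;\sum_{1\le|J|\le\lfloor n/2\rfloor+1}\int_{\Reals^n}|\partial_{\tilde y}^J\tilde v|_E^2\,d\tilde y,
\]
which starts at $|J|\ge1$ and is valid for compactly supported functions (this is where $n\ge3$ is used, so that $|\xi|^{-2}$ is locally integrable on the Fourier side). No zero-derivative term ever appears, so no Poincar\'e is needed. Unscaling and using $s\le t$ to convert $s^{|J|}Y^J$ into $Z^J$ (up to absorbable commutators) then yields the weight $s^{n-2}$, which is precisely what the lemma claims---note the lemma asserts $s^{4\decayRate}=s^{n-2}$, not the stronger $t_0^{n-2}$ you were aiming for. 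The paper also takes $v=\chi(|x|/t_{\max})u$ rather than $u-f$; the point of the cutoff at scale $t_{\max}$ is to make $v$ compactly supported while keeping $v=u$ on the interior, so the integral of $|Y_iZ^Iv|^2$ over $\{|x|\le t-1\}$ equals that of $u$ with no cross-terms, and the tail $\{t_{\max}-1\le|x|\le 2t_{\max}\}$ (where $v=\chi f$) is controlled by the decay of $f$.
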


\begin{proof}
Lemma \ref{lem:CompactEllipticEst} and the standard Sobolev estimate imply
\begin{align*}
\sup_{\omega\in\Compact}|u(\cdot,\cdot,\omega)|_E \leq \|u\|_{H^{\dRegIndex}(\Compact)} \leq \|(\cLap)^{\dRegIndex/2} u\|_{L^2(\Compact)} + \|u\|_{L^2(\Compact)},
\end{align*}
for $\dRegIndex$ the smallest even integer greater than $d/2 $. This choice of $\dRegIndex$ being even is simply to make the  elliptic estimate cleaner. Note the trivial estimate
\begin{align*}
\sum_{|I|\leq \tnu-\dRegIndex}\left( |Y_i \LGen^{I}(\cLap)^{\dRegIndex/2}u|_E^2 + |Y_i \LGen^{I} u|_E^2\right)
\leq \sum_{|I|+2j\leq \tnu}|Y_i \LGen^{I}(\cLap)^{j}u|_E^2 \,.
\end{align*}
It is thus sufficient to prove in Minkowski space that
\begin{align} \label{eq:SobolevPartialGoal}
\sup_{\Sigma_s} s^{n-2} |u(t,x)|_E^2 
\leq{}& C \sum_{|I|\leq \tnu-\dRegIndex} \sum_{i=1}^n \int_{\Sigma_s} |Y_i \LGen^I u|_E^2 \di x 
+C \sum_{|I|\leq \tnu-1} C_I^2. 
\end{align}
since this would then imply
\begin{align*}
\sup_{\Sigma_s\times\Compact}s^{n-2}|u(t,x^i,\omega)|_E^2 
&{}\lesssim \sum_{|I|\leq \tnu-\dRegIndex} \sum_{i=1}^n \| \sup_\Compact (Y_i \LGen^I u ) \|_{L^2_x}^2+C \sum_{|I|\leq \tnu-1} C_I^2\\
&{}\lesssim \sum_{|I|\leq \tnu-\dRegIndex} \sum_{i=1}^n \| Y_i \LGen^I (\cLap)^{\dRegIndex/2}u \|_{L^2_x L^2_\Compact}^2 +C \sum_{|I|\leq \tnu-1} C_I^2. 
\end{align*}
For $|x|\geq t-1$ and $(t,x)\in \Sigma_s$, one has $t\sim|x|$, and so 
\begin{align*}
s^{n-2}|u(t,x)|_E^2 \leq t^{n-2}|u(t,x)|_E^2 \leq C |x|^{n-2} |u(t,x)|_E^2 \leq C |x|^{n-2} |f(x)|_E^2 \leq C C_0^2.
\end{align*} Thus, it remains to prove \eqref{eq:SobolevPartialGoal} for $|x|\leq t-1$. 

Consider the region $|x|\leq t-1$. Set $t_{\max}=(s^2+1)/2$, which is the value of $t$ at which $\Sigma_s$ intersects $|x|=t-1$ and which satisfies $t\leq t_{\max}\leq (t^2+1)/2$ on the portion of $\Sigma_s$ where $|x|\leq t-1$ by lemma \ref{lem:tBounds}.
Let $\chi:\Reals\rightarrow[0,1]$ be a smooth cut-off function such that $\chi(\ia)=1$ for $\alpha<1$ and $\chi(\alpha)=0$ for $\alpha>2$, and define the $(0,2)$ tensor $v_{\mu\nu}(t,x)=\chi(|x|/t_{\max})u_{\mu\nu}(t,x)$. Observe that $u_{\mu\nu}=v_{\mu\nu}$ in the region $|x|\leq t-1$. 

Hormander's proof of lemma \ref{lem:HormanderWeightedSobolev} relies on a carefully chosen rescaling of a portion of the hyperboloid, and the rest of this lemma follows the same idea, although the scaling is chosen differently.  
Recall both the Cartesian $(t,x)$ and hyperboloidal $(s,y)$ in Minkowski space, which are related via $(s,y)=(\sqrt{t^2-|x|^2},x)$. Given a choice of $s$, define $\tilde{y}=s^{-1}y$ and $\tilde{v}(\tilde{y})$ to be the value of $v$ at hyperboloidal coordinates $(s,s\tilde{y})$. With this $\di^n\tilde{y}=s^{-n}\di y$, $\partial_{\tilde{y}^i}=s\partial_{y^i}=s Y_i$. Recall $Z_i =t Y_i$. Thus, by a Sobolev estimate that exploits the fact that $1< n/2 < n/2+1$, 
\begin{align*}
\sup_{\Sigma_s} |v(t,x)|_E^2 
={}& \sup |\tilde{v}(\tilde{y})|_E^2 \\
\lesssim{}& \sum_{1\leq |J| \leq\frac{n}{2}+1} \int |\partial_{\tilde{y}}^J\tilde{v}|_E^2 \di^n\tilde{y} . 
\end{align*}
From rescaling and the facts that $s\leq t$ and that $Z_{0i}=tY_i$, it follows that
\begin{align*}
\sup_{\Sigma_s} |v(t,x)|_E^2 
\lesssim{}& s^{-n} \sum_{1\leq |J| \leq\frac{n}{2}+1} \int |(sY)^J v|_E^2 \di^n y \\
\lesssim{}& s^{-n+2} \sum_{0\leq |J| \leq\frac{n}{2}}\sum_i \int s^{2|J|}|Y^JY_i v|_E^2 \di^n y \\
\lesssim{}& s^{-n+2} \sum_{0\leq |J| \leq\frac{n}{2}}\sum_i \int t^{2|J|}|Y^JY_i v|_E^2 \di^n y \\
\lesssim{}& s^{-n+2} \sum_{0\leq |J| \leq\frac{n}{2}}\sum_i \int |Y_i \LGen^J v|_E^2 \di^n y . 
\end{align*}
The integral on the right can be decomposed into the parts where $|x|\leq t-1$ and $|x|>t-1$. Where $|x|\leq t-1$, the integral can be bounded by the integral term on the right-hand-side of \eqref{eq:SobolevPartialGoal} since $\tnu-\dRegIndex>n/2$.  Now consider the region $|x|>t-1$. Because of the support of $\chi$, it is sufficient to consider the region $t_{\max}-1\leq |x|\leq 2(t_{\max}-1)$. In this region, $v=\chi f$. When a derivative is applied to $v$, it is applied to either $\chi$ or to $f$, in which case one obtains an additional factor of $t_{\max}^{-1}$ or $|x|^{-1}$, from the properties of $\chi$ and $f$ respectively. Since $|x|/t_{\max}\in[1,2]$ in the support of $\partial \chi$, effectively, one obtains an extra factor of $|x|^{-1}$ in all cases, so $|Y_i\LGen^J v|_E\leq C C_{|J|+1}|x|^{-(n-1)/2-1}$, and 
\begin{align*}\int_{|x|\geq t_{\max}-1} |Y_i\LGen^J u|_E^2\di x
\leq{}& C C_{|J|+1}^2 \int_{\sphereN{n-1}}\int_{t_{\max}-1}^{2(t_{\max}-1)} (|r|^{-(n-1)/2-1})^2 |r|^{n-1}\di r \di^{n-1}\omega_{\sphereN{n-1}} \\
\leq{}& C C_{|J|+1}^2.
\end{align*} 

Observing that $s\geq Ct^{1/2}$ in the region $|x|\leq t-1$ allows us to obtain 
\begin{align*}
\sup_{\Sigma_s\times\Compact} t^{2\decayRate} |u|_E^2
&{}\leq \sup_{\Sigma_s\times\Compact\cap \{ |x|\leq t-1 \}} t^{2\decayRate} |u|_E^2 +\sup_{\Sigma_s\times\Compact\cap \{ |x|> t-1 \}} t^{2\decayRate} |u|_E^2\\
&{}\lesssim \sup_{\Sigma_s\times\Compact\cap \{ |x|\leq t-1 \}} s^{4\decayRate} |u|_E^2 + \sup_{\Sigma_s\times\Compact\cap \{ |x|> t-1 \}} r^{2\decayRate} |f|_E^2.\\
&{}\lesssim \sum_{|I|\leq \tnu} \sum_{i=1}^n  \int_{\underset{|x|\leq t-1}{\Sigma_s\times\Compact}} |Y_i \LGen^I u|_E^2 \di x \diCVol + \sum_{|I|\leq \tnu-1} C_I^2 \\
&{} + C_0 \sup_{\Sigma_s\times\Compact\cap \{ |x|> t-1 \}} r^{\frac{n-2}{2}} r^{-\frac{n-1}{2}}.
\end{align*}
In the final line we applied estimate \eqref{eq:SobolevOnHyperboloidsInSUSYsDecay} to the first term and assumption \eqref{eq:DecayAssumptionsSobolevInSUSY} to the second term. 
\end{proof}

\section{Energy integrals and inequalities}\label{sec:Energy}
\subsection{Basic properties of the energy}
The energy introduced in the following definition is related to  the standard energy used to study quasilinear hyperbolic PDEs, albeit with additional terms included in order to be compatible with the linearised equations \eqref{eq:linearisedEinstein}. 

\begin{definition}[Lichnerowicz-type energy on hyperboloids]\label{def:LichEnergyHyperboloids}
Let $n\in\Integers^+$ and let $\gamma^{\mu\nu},u_{\mu\nu}$ be tensors defined on $\Reals^{1+n}\times\Compact$. 
For $u,\gamma\in C^1(\Reals^{1+n}\times\Compact)$ and $s\geq 2$ define
\begin{align}
\mE[\gamma;u;s]
={}&\int_{\Sigma_s\times\Compact} \Big((s/t)^2|\partial_t u|_E^2 + \sum_{i=1}^n|Y_i u|_E^2 +  \langle \bCD^\ica u, \bCD_\ica u\rangle_E - 2 \langle R[\bMet]\circ u, u\rangle_E \notag\\
&\quad - 2\gamma^{\ia\ib}\langle\bCD_\ib u,\partial_t u\rangle_E n_\ia +\gamma^{\ia\ib}\langle\bCD_\ia u, \bCD_\ib u\rangle_E\Big) \di x \diCVol,
\end{align}
where $n_0 = 1,n_i=-x_i/t$ for $i\in\{1,\ldots,n\}$ and $n_\ica=0$, and $\di x$ is the flat Euclidean volume form.
\end{definition} 

Note that, following \cite{Hormander,LeFlochMa}, we have defined $\mE[\gamma;u;s]$ so that it is not the naturally induced energy associated with the metric $\bMet+\gamma$. This is because we have endowed $\Sigma_s$ with the flat Euclidean volume form $\di x$, instead of the induced Riemannian volume form $(s/t)\di x$.

The following lemma provides us with an energy functional which allows us to measure the perturbation of the spacetime. Note that in \eqref{eq:GammaHypDecay} we require some weighted $t-$decay on hyperboloids which we recover from \eqref{eq:SobolevOnHyperboloidsInSUSYtDecay} in lemma \ref{lem:SobolevOnHyperboloidsInSUSY}. 

\begin{lemma}[Basic properties of the energy] \label{lem:LichEnergyOnHyperboloids}
Take the conditions of definition \ref{def:LichEnergyHyperboloids}.
\begin{enumerate}
\item There is an $\epsilon_n>0$, such that if 
\begin{align} \label{eq:GammaHypDecay}
\sup_{\Sigma_s\times\Compact} t|\gamma|_E\leq C\epsilon_n,
\end{align} 
then for $s\geq 2$,
\begin{align}
\frac12 \mE[\gamma;u;s]
\leq \mE[0;u;s]
\leq 2 \mE[\gamma;u;s]. 
\label{eq:HyperboloidEnergyEquivalence}
\end{align}
\item \label{pt:EnergyInequality} If $u_{\mu\nu}$ is a solution of 
\begin{align}
(\bMet + \gamma)^{\ia \ib} \bCD_\ia \bCD_\ib u_{\mu\nu}+2(R[\bMet]\circ u)_{\mu\nu} = F_{\mu\nu},
\end{align}
then 
\begin{align}
& \mE[\gamma;u;s_1]
= \mE[\gamma;u;s_2]
+\int_{s_1}^{s_2} \int_{\Sigma_s\times\Compact} \langle F, \partial_t u \rangle_E \frac{s}{t} \di y \diCVol\di s 
\label{eq:LichEnergyOnHyperboloidsInequality}\\
&{}+\int_{s_1}^{s_2} \int_{\Sigma_s\times\Compact} \Big(-2(\bCD_\ia\gamma^{\ia\ib})\langle \bCD_\ib u,\partial_t u\rangle_E +(\partial_t\gamma^{\ia\ib})\langle\bCD_\ia u,\bCD_\ib u\rangle_E\Big) \frac{s}{t} \di y \diCVol\di s . \nonumber
\end{align}
\end{enumerate}
\end{lemma}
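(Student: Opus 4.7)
The plan is to establish (i) via pointwise Cauchy--Schwarz absorption of the $\gamma$-correction against $\mE[0;u;s]$, using the decay hypothesis together with Riemannian linear stability, and (ii) via the standard multiplier calculation with $2\partial_t u$ combined with the divergence theorem on the slab $\{s_1\leq s\leq s_2\}$.

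For part (i), the difference of energies is
\begin{align*}
\mE[\gamma;u;s] - \mE[0;u;s] = \int_{\Sigma_s\times\Compact} \bigl( -2\gamma^{\ia\ib}\langle\bCD_\ib u,\partial_t u\rangle_E n_\ia + \gamma^{\ia\ib}\langle\bCD_\ia u,\bCD_\ib u\rangle_E \bigr) \di x\,\diCVol,
\end{align*}
and each integrand is bounded pointwise by $C|\gamma|_E$ times a quadratic form in $\partial_t u$ and $\bCD u$. The delicate contribution is $\gamma^{00}|\partial_t u|_E^2$, because $\mE[0;u;s]$ only controls the degenerate quantity $(s/t)^2|\partial_t u|_E^2$. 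However, Lemma \ref{lem:tBounds} gives $(s/t)^2 \geq (2t-1)/t^2 \gtrsim 1/t$ on $|x|\leq t-1$, while the hypothesis furnishes $|\gamma|_E \leq C\epsilon_n/t$, so $|\gamma^{00}|\,|\partial_t u|_E^2 \lesssim \epsilon_n (s/t)^2|\partial_t u|_E^2$. The mixed and spatial terms are handled analogously, using $Y_i = X_i + (x_i/t)T$ to re-express $|\bCD u|_E^2$ as a combination of $\sum_i|Y_i u|_E^2$ and $(s/t)^2|\partial_t u|_E^2$. For the non-negativity of $\mE[0;u;s]$ itself, the Minkowski kinetic combination $(s/t)^2|\partial_t u|_E^2 + \sum_i|Y_i u|_E^2$ is pointwise non-negative, while the internal-directional piece $\langle\bCD^A u,\bCD_A u\rangle_E - 2\langle R[\bMet]\circ u,u\rangle_E$ has non-negative integral over each fibre $\{(t,x)\}\times\Compact$ by Riemannian linear stability (Definition \ref{def:RiemannianLinearStable} and Theorem \ref{thm:DaiWangWei}). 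Choosing $\epsilon_n$ sufficiently small then yields \eqref{eq:HyperboloidEnergyEquivalence}.

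For part (ii), pair the PDE with the multiplier $2\partial_t u$ in the $E$-inner product. Since $\bCD g_E = 0$ and $\partial_t R[\bMet]=0$ (the curvature of the product metric depends only on the compact factor), the $\bMet$-piece rewrites as a pure spacetime divergence,
\begin{align*}
2\bMet^{\ia\ib}\langle \bCD_\ia\bCD_\ib u,\partial_t u\rangle_E + 4\langle R[\bMet]\circ u, \partial_t u\rangle_E = \bCD_\ia\bigl( 2\langle \bCD^\ia u,\partial_t u\rangle_E - T^\ia|\bCD u|_E^2 + 2T^\ia\langle R[\bMet]\circ u, u\rangle_E \bigr).
\end{align*}
Using the symmetry of $\gamma^{\ia\ib}$ and $\bCD_\ia\partial_t u = \partial_t \bCD_\ia u$ (valid in Cartesian coordinates on the product metric, where the only nonzero Christoffels are those of $\cMet$), the $\gamma$-perturbation decomposes as
\begin{align*}
2\gamma^{\ia\ib}\langle \bCD_\ia\bCD_\ib u,\partial_t u\rangle_E &= \bCD_\ia\bigl( 2\gamma^{\ia\ib}\langle\bCD_\ib u,\partial_t u\rangle_E - T^\ia\gamma^{\ic\id}\langle\bCD_\ic u,\bCD_\id u\rangle_E\bigr) \\
&\quad - 2(\bCD_\ia\gamma^{\ia\ib})\langle\bCD_\ib u,\partial_t u\rangle_E + (\partial_t\gamma^{\ia\ib})\langle\bCD_\ia u,\bCD_\ib u\rangle_E.
\end{align*}
Integrating this identity against the spacetime volume form $\di\mu_\eta = (s/t)\,\di y\,\di s$ over $\{s_1\leq s\leq s_2\}$ and applying the divergence theorem, the divergence contributions reassemble into $2\mE[\gamma;u;s_2] - 2\mE[\gamma;u;s_1]$ at the two boundary hyperboloids (with conormal factors $n_\ia$ arising from the choice of the Euclidean measure $\di x$ on $\Sigma_s$ rather than the induced Riemannian volume), while the remaining bulk terms and the source yield precisely the right-hand side of \eqref{eq:LichEnergyOnHyperboloidsInequality} after dividing by two.

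The main obstacle I anticipate is careful bookkeeping of signs and coefficients, verifying that the boundary flux produced by the divergence theorem matches the integrand of $\mE[\gamma;u;s]$ coefficient-by-coefficient, including the exact form of the conormal $n_\ia$ associated with the Euclidean measure. A subsidiary point is the vanishing of any flux contribution at the asymptotic end of $\Sigma_s$, which is legitimate in the application because the data are Schwarzschild outside a compact set and the relevant energies are finite, so the boundary terms at spatial infinity vanish.
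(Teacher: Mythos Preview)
Your proposal is correct and follows essentially the same route as the paper. The only difference is presentational: for part (ii) the paper first introduces the stress-energy tensor $T[\gamma;u]^\mu{}_\nu$ (the nonlinear version of \eqref{eq:LinearStressEnergyTensor}), computes $\bCD_\mu(T^\mu{}_\nu X^\nu)$ for a general $X$ tangent to $\Reals^{1+n}$, and then specializes to $X=\partial_t$ before applying Stokes, whereas you go directly to the $2\partial_t u$ multiplier identity; these are the same computation, and the paper's key structural input $\Riem[\bMet]_{\ia\ib\ic\id}X^\id=0$ is exactly what you use when you invoke $\bCD_\ia\partial_t u=\partial_t\bCD_\ia u$ and $\partial_t R[\bMet]=0$. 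For part (i) your argument coincides with the paper's: bound the $\gamma$-correction by $|\gamma|_E(t/s)^2$ times the $\mE[0;\cdot;s]$ density via the pointwise inequality $\frac{s^2}{2t^2}\sum_\ia|\bCD_\ia u|_E^2\le$ (energy density), then use $t|\gamma|_E\lesssim\epsilon_n$ together with $s\gtrsim t^{1/2}$, and invoke Riemannian linear stability for nonnegativity.
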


\begin{proof}
We first derive the energy $\mE[\gamma;u;s]$ by considering the following nonlinear version of the stress energy tensor \eqref{eq:LinearStressEnergyTensor} 
\begin{align}
T[\gamma;u]^\mu{}_\nu
&{}= (\bMet+\gamma)^{\mu\ia}\langle\bCD_\ia u,\bCD_\nu u\rangle_E - \frac12(\bMet+\gamma)^{\ia\ib} \langle\bCD_\ib u,\bCD_\ia u\rangle_E \delta^\mu_\nu\notag\\
&{}+ \langle R[\bMet]\circ u, u\rangle_E \delta^\mu_\nu.
\end{align}
We calculate
\begin{equation}\aligned
&{}\bCD_\mu T[\gamma;u]^\mu{}_\nu\\
&{}= \langle(\bMet+\gamma)^{\ia\ib}\bCD_\ia\bCD_\ib u,\bCD_\nu u\rangle_E+(\bMet+\gamma)^{\mu\ia}\langle\bCD_\ia u,\bCD_\mu \bCD_\nu u\rangle_E \\
&{}-(\bMet+\gamma)^{\ia\ib} \langle\bCD_\nu\bCD_\ib u,\bCD_\ia u\rangle_E +\bCD_\nu \langle R[\bMet]\circ u, u\rangle_E \\
&{}+(\bCD_\mu \gamma^{\mu\ia})\langle \bCD_\ia u, \bCD_\nu u\rangle_E - \frac12(\bCD_\nu\gamma^{\ia\ib})\langle\bCD_\ia u, \bCD_\ib u\rangle_E  .
\endaligned\end{equation}
Let $X^\mu$ be a vector field on $\Reals^{1+n}\times\Compact$ tangent to $\Reals^{1+n}$. We have
\begin{align*}
\bCD_\ia\bCD_\ib u_{\ic\id}=\bCD_\ib\bCD_\ia u_{\ic\id}+ \Riem[\bMet]_{\ia\ib\ic}{}^\rho u_{\rho\id} + \Riem[\bMet]_{\ia\ib\id}{}^\rho u_{\rho\ic}.
\end{align*}
However since $\Riem[\eta_{\Reals^{1+n}}]\equiv0$ we have
\begin{align}
\Riem[\bMet]_{\ia\ib\ic\id}X^\id = 0.
\end{align}
Consequently 
\begin{align*}
\langle \bCD_\ia\bCD_\ib u,\bCD_\nu u\rangle_E X^\ia=\langle \bCD_\ib\bCD_\ia u,\bCD_\nu u\rangle_E X^\ia.
\end{align*} 
and also
\begin{align*}
\bCD_\nu \langle R[\bMet]\circ u, u\rangle_E X^\nu=2\langle R[\bMet]\circ u, X^\nu\bCD_\nu u\rangle_E.
\end{align*}
This allows us to calculate
\begin{align*}
&{}\bCD_\mu (T[\gamma;u]^\mu{}_\nu X^\nu) \\
&{}= T^\mu{}_\nu [\gamma]\bCD_\mu X^\nu + \langle F, X^\nu\bCD_\nu u\rangle_E \\
&{}+(\bCD_\mu \gamma^{\mu\ia} ) \langle \bCD_\ia u, X^\nu\bCD_\nu u\rangle_E  - \frac12 (X^\nu \bCD_\nu\gamma^{\ia\ib})\langle\bCD_\ia u, \bCD_\ib u\rangle_E .
\end{align*}
Consider the hyperboloidal energy
\begin{align*}
\mE[\gamma;u;s]\\
={}&\int_{\Sigma_s\times\Compact} -2 T[\gamma;u]^\mu{}_\nu (\partial_t)^\nu n_\mu \di x\diCVol \\
={}& \int_{\Sigma_s\times\Compact} \Big(|\partial_t u|^2_E + \sum_{i=1}^n|\partial_i u|^2_E +\sum_{i=1}^n 2 \frac{x^i}{t} \langle \partial_t u, \partial_i u \rangle_E+ \cMet^{AB} \langle \bCD_A u, \bCD_B u\rangle_E \\
&-2\langle R[\bMet]\circ u, u \rangle_E - 2\gamma^{\mu\rho}\langle\bCD_\rho u,\partial_t u\rangle_E n_\mu +\gamma^{\rho\lambda}\langle\bCD_\rho u, \bCD_\lambda u\rangle_E\Big)\di x\diCVol.
 \end{align*}
where $n_0 = 1,n_i=-\eta_{ij}x^j/t$ for $i\in\{1,\ldots,n\}$ and $n_\ica=0$.
Note that 
\begin{equation}\aligned
\mE[0;u;s] = \int_{\Sigma_s\times\Compact} &{}\Big(|\partial_t u|^2_E + \sum_{i=1}^n|\partial_i u|^2_E + 2 \frac{x^i}{t} \langle \partial_t u, \partial_i u \rangle_E \\
&{}+ \langle \bCD^\ica u, \bCD_\ica u\rangle_E - 2 \langle R[\bMet]\circ u, u \rangle_E\Big)\di x\diCVol,
\endaligned\end{equation}
which alternatively can be written in hyperboloidal coordinates as
\begin{equation}\aligned
\mE[0;u;s] =\int_{\Sigma_s\times\Compact} &{}\Big(\left(s/t\right)^2|\partial_t u|^2_E +\sum_{i=1}^n|Y_{i} u|^2_E+ \langle \bCD^\ica u, \bCD_\ica u\rangle_E \\
&{}- 2 \langle R[\bMet]\circ u, u \rangle_E\Big)\di x\diCVol.
\endaligned\end{equation}
Since the contraction of $R[\bMet]$ with any direction tangent to $\Reals^{1+n}$ vanishes, and since $|w|_E\geq |w|_{\cMet}$ for any tensor field $w$, it follows from the definition of $\mL$ that
\begin{align*}
\int_\Compact&{}\Big(\langle \bCD^\ica u, \bCD_\ica u\rangle_E - 2 \langle R[\bMet]\circ u, u\rangle_E\Big) \diCVol\\
&{}\geq \int_\Compact \Big(\langle \bCD^\ica u, \bCD_\ica u\rangle_{\cMet} - 2 \langle R[\bMet]\circ u, u\rangle_{\cMet} \Big)\diCVol\\
&{}= \int_\Compact \langle \mL u, u\rangle_{\cMet} \diCVol .
\end{align*}
Thus, from theorem \ref{thm:DaiWangWei} and the condition of Riemannian linear stability \eqref{eq:RiemannianLinearlyStable}, it follows that 
\begin{align}
\int_\Compact&{}\Big(\langle \bCD^\ica u, \bCD_\ica u\rangle_E - 2 \langle R[\bMet]\circ u, u\rangle_E\Big) \diCVol
\geq 0.
\end{align}
This implies $\mE[0,u,s] \geq 0$. Using our previously calculated expression for the divergence of $T[\gamma;u]^\mu{}_\nu X^\nu$ we obtain via Stoke's theorem
\begin{align}
&{}\mE[\gamma;u;s_1]
= \mE[\gamma;u;s_2]
+\int_{s_1}^{s_2} \int_{\Sigma_s\times\Compact} \langle -2F, \partial_t u \rangle_E \frac{s}{t} \di y \diCVol\di s \\
&{}+\int_{s_1}^{s_2} \int_{\Sigma_s\times\Compact} \Big(-2(\bCD_\ia\gamma^{\ia\ib})\langle \bCD_\ib u,\partial_t u\rangle_E +(\partial_t\gamma^{\ia\ib})\langle\bCD_\ia u,\bCD_\ib u\rangle_E\Big) \frac{s}{t} \di y \diCVol\di s .\notag
\end{align}
This proves equality \eqref{eq:LichEnergyOnHyperboloidsInequality}.

Condition \eqref{eq:GammaHypDecay} combined with $s\geq Ct^{1/2}$ implies $\sup_{\Sigma_s\times\Compact}|\gamma|_E (t/s)^2 \leq C \varepsilon_n$. For simplicity denote $\cMet^{\ica\icb}\langle \bCD_\ica u,\bCD_\icb u\rangle_E$ by $|\partial_A u|_E^2$, then
\begin{align*}
\frac{s^2}{2t^2}(|\partial_t u|_E^2\sum_i+|\partial_i u|_E^2 + |\cCD u|_E^2 ) &{}\leq (|\partial_t u|^2+\sum_i|\partial_i u|^2 + |\cCD u|_E^2 ) (1-|x|/t) \\
&{}\leq |\partial_t u|_E^2+|\partial_i u|_E^2 +2\frac{x^i}{t}\langle \partial_t u,\partial_i u\rangle_E+ |\cCD u|_E^2 .
\end{align*}
Using this and Young's inequality we find
\begin{align*}
&{}|\mE[\gamma;u;s]-\mE[0;u;s]| \\
&{}= \left| \int_{\Sigma_s\times\Compact} \left( 2\gamma^{\ia\ib}\langle\bCD_\ia u,\partial_t u\rangle_E n_\ib -\gamma^{\ia\ib}\langle\bCD_\ia u, \bCD_\ib u\rangle_E\right)\di x\diCVol \right|\\
&{}\leq C \varepsilon_n \mE[0;u;s].
\end{align*}
and thus the energies are equivalent for sufficiently small $\varepsilon_n$. This proves estimate \eqref{eq:HyperboloidEnergyEquivalence}, completing the proof of the lemma. 
\end{proof}

Having defined the energy which involves first-order derivatives, we now introduce higher-order energies. 

\begin{definition}[Symmetry boosted energy]
Let $(\Reals^{1+n}\times\Compact,\bMet)$ be a spacetime with a supersymmetric compactification and $\RegIndex\in\Naturals$. 
For $k\leq\RegIndex$, define the energy of a symmetric tensor field $g$ to be 
\begin{align}
\mE_{k+1}(s)&{}=\sum_{|I|\leq k} \mE[g^{-1}-\bMet^{-1};\Gamma^I g;s].
\end{align}
\end{definition}

We end this section with the following Hardy estimate on hyperboloids. The proof is standard, see for example \cite[Lemma 2.4]{LeFlochMa}.

\begin{lemma}[Hardy estimate on hyperboloids]\label{lem:HardyInequality}
Let $u_{\mu\nu}$ be a tensor defined on $\Reals^{1+n}$, then one has
\begin{align}
\| r^{-1} u \|_{L^2(\Sigma_s)}
\lesssim
\sum_{i=1}^n \| Y_i u \|_{L^2(\Sigma_s)}.
\end{align}
\end{lemma}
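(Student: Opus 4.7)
The plan is to reduce the estimate to the classical Hardy inequality on $\mathbb{R}^n$. Recall that the hyperboloidal coordinates on $\Sigma_s$ are $y = x$, that the measure on $\Sigma_s$ appearing in $\|\cdot\|_{L^2(\Sigma_s)}$ is the flat Euclidean volume $\di y = \di y^1\cdots\di y^n$, and that $Y_i = \partial_{y^i}$. Since the inner product $\langle\cdot,\cdot\rangle_E$ is independent of $y$ (the components of $g_E^{-1}$ are constant in the Cartesian basis in which $Y_i$ acts componentwise), the problem is equivalent to showing
\begin{align}
\int_{\Reals^n} \frac{|u(y)|_E^2}{|y|^2}\,\di y
\;\lesssim\;
\sum_{i=1}^n \int_{\Reals^n} |\partial_{y^i} u(y)|_E^2\,\di y,
\end{align}
for each fixed value of the remaining variables. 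By a density argument it suffices to prove this for tensors $u$ with components in $C_c^\infty(\Reals^n)$.

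First I would use the pointwise identity, valid for $n\geq 3$,
\begin{align}
\partial_{y^i}\!\left(\frac{y^i}{|y|^2}\right) = \frac{n-2}{|y|^2}.
\end{align}
Multiplying by $|u|_E^2$ and integrating by parts over $\Reals^n$ (the boundary contributions vanish because $u$ is compactly supported and $n\geq 3$ ensures integrability near the origin), one obtains
\begin{align}
(n-2)\int_{\Reals^n}\frac{|u|_E^2}{|y|^2}\,\di y
= -2\int_{\Reals^n}\frac{y^i}{|y|^2}\,\langle u,\partial_{y^i} u\rangle_E\,\di y.
\end{align}
Applying the Cauchy--Schwarz inequality to the right-hand side and using $|y^i/|y|^2|\leq 1/|y|$ together with $\sum_i|\partial_{y^i} u|_E^2 = \sum_i |Y_i u|_E^2$ yields
\begin{align}
(n-2)\int_{\Reals^n}\frac{|u|_E^2}{|y|^2}\,\di y
\;\leq\; 2\,\Bigl(\int_{\Reals^n}\frac{|u|_E^2}{|y|^2}\,\di y\Bigr)^{\!1/2}\Bigl(\sum_{i=1}^n\int_{\Reals^n}|Y_i u|_E^2\,\di y\Bigr)^{\!1/2}.
\end{align}
Dividing through and squaring gives the claimed inequality with constant $\bigl(\tfrac{2}{n-2}\bigr)^2$. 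Finally, I would remove the compact support assumption by a standard cutoff/approximation argument: multiply $u$ by a smooth cutoff $\chi_R(y)=\chi(|y|/R)$ supported in $|y|\leq 2R$ and equal to $1$ on $|y|\leq R$, apply the estimate to $\chi_R u$, and pass $R\to\infty$ using monotone convergence on the left and the fact that $|\partial \chi_R|\leq C/R$ contributes a term bounded by $CR^{-2}\|u\|_{L^2(R\leq|y|\leq 2R)}^2$ which vanishes in the limit (or is controlled via a second application of Hardy).

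The only mild subtlety is that $u$ is tensor-valued, but this is cosmetic: since $Y_i$ acts as the coordinate derivative on each Cartesian component of $u$ and $\langle\cdot,\cdot\rangle_E$ is a positive definite constant-coefficient pairing on the components, one can either apply the scalar Hardy inequality componentwise or, as above, work directly with $|u|_E^2$ noting that $\partial_{y^i}|u|_E^2 = 2\langle u,\partial_{y^i}u\rangle_E$. The hypothesis $n\geq 9$ in the main theorem comfortably accommodates the requirement $n\geq 3$ underlying the identity displayed above, so no additional assumption is needed.
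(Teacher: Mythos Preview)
Your argument is correct and is precisely the standard derivation the paper has in mind: the paper does not give a proof at all, simply stating that ``the proof is standard, see for example \cite[Lemma 2.4]{LeFlochMa}.'' Your reduction to the classical Hardy inequality on $\Reals^n$ via the identity $\partial_{y^i}(y^i/|y|^2)=(n-2)/|y|^2$, integration by parts, and Cauchy--Schwarz is exactly that standard proof, and your remarks on why the tensor-valuedness is cosmetic (constant-coefficient $g_E$ in the Cartesian basis, $Y_i=\partial_{y^i}$ on $\Sigma_s$ with flat measure $\di y$) are accurate.
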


\subsection{Preliminary $L^2$ and $L^\infty$-estimates}\label{sec:PreliminaryL2Linfty}
In our nonlinear estimates we will need to estimate terms of the form
\begin{align}\label{eq:DerivsDistibutedExample}
\LGen^I(\cLap)^j(uv) =\sum_{\substack{|I_1|+|I_2|=|I|\\|J_1|+|J_2|= 2j}} \LGen^{I_1}\cCD^{J_1}u \cdot \LGen^{I_2}\cCD^{J_2} v.
\end{align}
In the following lemma we estimate terms which appear as factors in the right hand side of \eqref{eq:DerivsDistibutedExample} in $L^2$ by using the elliptic estimates of lemma \ref{lem:CompactEllipticEst} and the Hardy estimate of lemma \ref{lem:HardyInequality}. Note the use of elliptic estimates allows us to avoid commuting derivatives, such as $[\cCD,\cLap]$, which makes the argument shorter. 

\begin{lemma}[$L^2$ estimate for distributed derivatives]\label{lem:DistributedDerivativesL2Estimate}
Let $u_{\mu\nu}$ be a tensor defined on $\Reals^{1+n}\times\Compact$. Suppose $\RegIndex$ is even, $\ell\in\Naturals$ and $\ell\leq \RegIndex+1$, then
\begin{equation}\aligned\label{eq:DistributedDerivativesL2Estimate}
\sum_{|I|+|J|\leq \ell}  \|t^{-1}\LGen^{I}\cCD^{J} u\|_{L^2(\Sigma_s\times\Compact)} 
&{}\lesssim  \mE_{\RegIndex+1}(s)^{1/2}.
\endaligned \end{equation}
\end{lemma}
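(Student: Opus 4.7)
The plan is to reduce the sum in~\eqref{eq:DistributedDerivativesL2Estimate} to norms that $\mE_{\RegIndex+1}$ controls directly, namely $\|Y_i\Gamma^{I_0}u\|_{L^2}$, $\|\cCD_\ica \Gamma^{I_0}u\|_{L^2}$, and $\|(s/t)\partial_t\Gamma^{I_0}u\|_{L^2}$ for $|I_0|\leq\RegIndex$. The first step is to absorb the $\cCD^J$ derivatives using elliptic theory on the compact manifold $\Compact$: iterating Lemma~\ref{lem:CompactEllipticEst} (applied to $u$ and to $\cCD u$, and noting that $[\cLap,\cCD]$ is lower order relative to $\cCD$) yields
\begin{align*}
\|\cCD^{2j}w\|_{L^2(\Compact)}&\lesssim \|(\cLap)^j w\|_{L^2(\Compact)}+\|w\|_{L^2(\Compact)}, \\
\|\cCD^{2j+1}w\|_{L^2(\Compact)}&\lesssim \|\cCD(\cLap)^j w\|_{L^2(\Compact)}+\|w\|_{L^2(\Compact)}.
\end{align*}
Because $Z^I$ acts only on the Minkowski factor and commutes with $\cCD$ and $\cLap$, applying these estimates pointwise in $(t,x)\in\Sigma_s$ and integrating against the weight $t^{-2}$ reduces the task to bounding $\|t^{-1}\Gamma^{I_0}u\|_{L^2(\Sigma_s\times\Compact)}$ for $|I_0|\leq\RegIndex+1$ and $\|t^{-1}\cCD\,\Gamma^{I_0}u\|_{L^2(\Sigma_s\times\Compact)}$ for $|I_0|\leq\RegIndex$, where $\Gamma^{I_0}$ denotes any composition $Z^I(\cLap)^j$.

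The $\cCD$-family is immediate: since $t\geq s\geq 2$ on $\Sigma_s$ forces $t^{-1}\leq \tfrac12$, one has $\|t^{-1}\cCD\,\Gamma^{I_0}u\|_{L^2}\lesssim \|\cCD_\ica\Gamma^{I_0}u\|_{L^2}\lesssim \mE_{\RegIndex+1}(s)^{1/2}$ for $|I_0|\leq\RegIndex$ by the definition of the energy. For the first family in the sub-critical range $|I_0|\leq\RegIndex$, I combine the pointwise bound $t^{-1}\leq r^{-1}$ on $\Sigma_s$ with the Hardy inequality (Lemma~\ref{lem:HardyInequality}), applied fiberwise in $\omega\in\Compact$, to obtain
\begin{align*}
\|t^{-1}\Gamma^{I_0}u\|_{L^2(\Sigma_s\times\Compact)}\leq \|r^{-1}\Gamma^{I_0}u\|_{L^2(\Sigma_s\times\Compact)}\lesssim \sum_{i=1}^n\|Y_i\Gamma^{I_0}u\|_{L^2(\Sigma_s\times\Compact)}\lesssim \mE_{\RegIndex+1}(s)^{1/2},
\end{align*}
the last bound being read off directly from the definition of $\mE_{\RegIndex+1}$.

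The crux of the argument, and the place where the hypothesis that $\RegIndex$ is even is used, is the borderline case $|I_0|=\RegIndex+1$ of the first family. Writing $\Gamma^{I_0}=Z^I(\cLap)^j$ with $|I|+2j=\RegIndex+1$, the fact that $2j$ is even forces $|I|$ to be odd, hence $|I|\geq 1$, so one Lorentz field $Z_a$ can be peeled off and the task reduced to $\|t^{-1}Z_a\,\Gamma^{I_0'}u\|_{L^2}$ with $|I_0'|=\RegIndex$. Using the identities $Z_{0i}=tY_i$ and $Z_{ij}=y_iY_j-y_jY_i$ from~\eqref{eq:ZExpandedAsY} (so that $|t^{-1}Z_{ij}v|_E\lesssim \sum_i|Y_iv|_E$, since $|y|\leq t$ on $\Sigma_s$), $X_i=Y_i-(y_i/t)\partial_t$, and $t^{-1}\partial_t=s^{-1}(s/t)\partial_t$, a short pointwise computation gives
\begin{align*}
\|t^{-1}Z_av\|_{L^2(\Sigma_s\times\Compact)}\lesssim \sum_{i=1}^n\|Y_iv\|_{L^2(\Sigma_s\times\Compact)}+s^{-1}\|(s/t)\partial_t v\|_{L^2(\Sigma_s\times\Compact)}
\end{align*}
for every Lorentz generator $Z_a\in\LGen$. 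Applied to $v=\Gamma^{I_0'}u$ with $|I_0'|=\RegIndex$, both right-hand norms are controlled by $\mE_{\RegIndex+1}(s)^{1/2}$ from the definition of the energy, and summing the finitely many contributions indexed by $|I|+|J|\leq\ell$ yields the claim. The main obstacle is precisely this borderline step: were $\RegIndex$ odd, the purely internal term $(\cLap)^{(\RegIndex+1)/2}u$ could occur with no Lorentz field available to peel off, and Hardy combined with the elliptic estimate would fail to close the $L^2$ bound.
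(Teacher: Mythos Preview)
Your proof is correct and uses the same ingredients as the paper's---the elliptic estimates on $\Compact$ (Lemma~\ref{lem:CompactEllipticEst}) to reduce $\cCD^J$ to powers of $\cLap$ with at most one residual $\cCD$, the Hardy inequality (Lemma~\ref{lem:HardyInequality}), and peeling off a Lorentz generator via $|t^{-1}Z_a v|_E\lesssim\sum_i|Y_i v|_E+s^{-1}(s/t)|\partial_t v|_E$. The only difference is organizational: the paper splits into the cases $|I|=0$ versus $|I|\geq 1$ and peels off a $Z$ whenever one is available (reserving Hardy for $|I|=0$), whereas you apply Hardy uniformly for total order $\leq N$ and peel only at the borderline order $N+1$, where the parity of $N$ forces $|I|\geq 1$; both orderings close.

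One step to tighten: the claim $\|\cCD_\ica\Gamma^{I_0}u\|_{L^2}\lesssim\mE_{N+1}(s)^{1/2}$ is not literally immediate from the definition of the energy, since $\mE[0;w;s]$ contains $\int(\langle\cCD^\ica w,\cCD_\ica w\rangle_E-2\langle R[\bMet]\circ w,w\rangle_E)$ rather than $\int|\cCD w|_E^2$, and the curvature correction need not have a sign. The clean fix is to keep the weight: write
\[
\|t^{-1}\cCD w\|_{L^2}^2\;\leq\;\int_{\Sigma_s}t^{-2}\!\int_\Compact\!\bigl(|\cCD w|_E^2-2\langle R[\bMet]\circ w,w\rangle_E\bigr)\,\di\mu_\cMet\,\di x\;+\;C\|t^{-1}w\|_{L^2}^2,
\]
bound the first integral by $\mE[0;w;s]$ using $t^{-2}\leq 1$ and Riemannian linear stability, and control $\|t^{-1}w\|$ by Hardy exactly as you already do. (The paper's own proof elides the same point.)
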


\begin{proof}
We prove the estimate by considering separately the cases of $|I|=0$ and $|I|\neq0$. Firstly take $|I|\geq 1$, suppose $|J|=2m$ where $m\in\Naturals$ and consider $|I|+|J|=\ell\leq\RegIndex+1$. Using the elliptic estimates of lemma \ref{lem:CompactEllipticEst} we find
\begin{align*}
\|t^{-1}\LGen^{I}\cCD^{J} u\|_{L^2(\Sigma_s\times\Compact)}
&{}\lesssim \|\|t^{-1}\LGen^Iu\|_{H^{2m}(\Compact)}\|_{L^2(\Sigma_s)}\\
&{}\lesssim \| t^{-1}\LGen^I (\cLap)^m u\|_{L^2(\Sigma_s\times\Compact)}+\|t^{-1}\LGen^I u\|_{L^2(\Sigma_s\times\Compact)}\\
&{}\lesssim \sum_{i=1}^n\|Y_i Z^{I-1}(\cLap)^m u\|_{L^2(\Sigma_s\times\Compact)}+\sum_{i=1}^n\|Y_i \LGen^{I-1} u\|_{L^2(\Sigma_s\times\Compact)}\\
&{}\lesssim \mE[0;\LGen^{I-1} (\cLap)^mu;s]^{1/2}+\mE[0;\LGen^{I-1} u;s]^{1/2}\\
&{}\lesssim \mE_{\ell}(s)^{1/2} .
\end{align*} 
Next take $|I|\geq 1$ and suppose $|J|=2m+1$ where $m\in\Naturals$. For $|I|+|J|=\ell\leq\RegIndex+1$, again using lemma \ref{lem:CompactEllipticEst}, we have
\begin{align*}
\|t^{-1}\LGen^{I}\cCD^{J} u\|_{L^2(\Sigma_s\times\Compact)}
\lesssim{}& \|\|t^{-1}\LGen^Iu\|_{H^{2m+1}(\Compact)}\|_{L^2(\Sigma_s)}\\
\lesssim{}&\sum_{i=1}^n \| Y_i \LGen^{I-1}u\|_{L^2(\Sigma_s\times\Compact)}+\sum_{i=1}^n \| Y_i \LGen^{I-1}(\cLap)^m u\|_{L^2(\Sigma_s\times\Compact)}\\
&{}+\| \cCD( \LGen^I (\cLap)^m u)\|_{L^2(\Sigma_s\times\Compact)}\\
\lesssim{}& \mE[0;\LGen^{I-1} u;s]^{1/2}+\mE[0;\LGen^{I-1} (\cLap)^m u;s]^{1/2}\\
&{}+\mE[0;\LGen^{I} (\cLap)^m u;s]^{1/2}\\
\lesssim{}& \mE_{\ell}(s)^{1/2} .
\end{align*}

We now turn to the case $|I|=0$. Again we split into the cases of $|J|$ being even and odd. Start with $|J|=2m$ for $m\in \Naturals$. Note that $\RegIndex$ is chosen even so that we have the strict inequality $2m<\RegIndex+1$. Applying the Hardy estimate from lemma \ref{lem:HardyInequality}, and recalling that $t\geq r$ on the hyperboloid, yields
\begin{align*}
\| t^{-1} \cCD^{J} u\|_{L^2(\Sigma_s\times\Compact)}
&{}\lesssim \left\|\|r^{-1}u\|_{H^{2m}(\Compact)} \right\|_{L^2(\Sigma_s)} \\
&{}\lesssim\|r^{-1}(\cLap)^m u\|_{L^2(\Sigma_s\times\Compact)}+\| r^{-1}u\|_{L^2(\Sigma_s\times\Compact)}\\
&{}\lesssim \sum_{i=1}^n \| Y_i (\cLap)^m u\|_{L^2(\Sigma_s\times\Compact)}
+\sum_{i=1}^n \| Y_i u\|_{L^2(\Sigma_s\times\Compact)}  \\
&{}\lesssim \mE[0,(\cLap)^m u;s]^{1/2}+\mE[0,u;s]^{1/2}\\
&{}\lesssim \mE_{\RegIndex+1}(s)^{1/2}.
\end{align*}
Finally we consider the case $|I|=0$ and $|J|=2m+1\leq\RegIndex+1$ for $m \in \Naturals$. Again using lemma \ref{lem:HardyInequality} we obtain
\begin{align*}
\| t^{-1} \cCD^{J} u\|_{L^2(\Sigma_s\times\Compact)}
\lesssim{}& \left\|\|r^{-1}u\|_{H^{2m+1}(\Compact)} \right\|_{L^2(\Sigma_s)} \\
\lesssim{}&\|r^{-1} \cCD (\cLap)^m u\|_{L^2(\Sigma_s\times\Compact)}+\|r^{-1}u\|_{L^2(\Sigma_s\times\Compact)}\\
\lesssim{}&\| \cCD (\cLap)^m u\|_{L^2(\Sigma_s\times\Compact)}
+\sum_{i=1}^n \|Y_i u\|_{L^2(\Sigma_s\times\Compact)}  \\
\lesssim{}&\mE[0,(\cLap)^m u;s]^{1/2}+\mE[0,u;s]^{1/2}\\
\lesssim{}& \mE_{|J|}(s)^{1/2} .
\end{align*}
Adding together the above estimates over all appropriate multi-indices gives the required result. 
\end{proof}

\begin{corollary}[$L^2$ estimate for eventually prescribed functions on hyperboloids foliating product spacetimes]
\label{corol:L2DistDerivsOnHyperboloidsInSUSY}
Let $n\geq 4$. Let $u_{\mu\nu},f_{\mu\nu}$ be tensors defined on $\Reals^{1+n}\times\Compact$  with $f$ depending only on the Minkowski coordinates. Suppose $u=f$ for $|x|\geq t-1$. Let $f\in C^\infty(\Reals^{1+n}\times\Compact)$ be smooth and such that for all $I\in\Naturals$, there is a $C_I$ such that\footnote{Note that decay assumption on $f$ is stronger here than the assumption \eqref{eq:DecayAssumptionsSobolevInSUSY} in lemma \ref{lem:SobolevOnHyperboloidsInSUSY}.}
\begin{align}\label{eq:DecayAssumptionL2Estimate}
|\bCD^I f|_E\leq C_{|I|}|x|^{-(n+1)/2-|I|}.
\end{align}
Suppose $\RegIndex$ is even, $\ell\in\Naturals$ and $\ell\leq \RegIndex+1$, then
\begin{equation}\aligned\label{eq:WeightedDistributedDerivativesL2Estimate}
\sum_{|I|+|J|\leq \ell}\|(s/t)\LGen^{I}\cCD^{J} u\|_{L^2(\Sigma_s\times\Compact)}
&{}\lesssim  s \mE_{\RegIndex+1}(s)^{1/2} + \sum_{|I|+|J|\leq\ell}  C_{|I|,|J|}.
\endaligned \end{equation}
\end{corollary}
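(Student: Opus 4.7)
The plan is to split $\Sigma_s\times\Compact$ into the inner region $R_{\mathrm{in}}=\{|x|\leq t-1\}$ and the outer region $R_{\mathrm{out}}=\{|x|\geq t-1\}$ and control each part by a different mechanism. On $R_{\mathrm{in}}$ the weight $s/t$ behaves favorably since $s$ is constant on $\Sigma_s$ and $s\leq t$ by lemma \ref{lem:tBounds}, so the estimate in lemma \ref{lem:DistributedDerivativesL2Estimate} will directly yield the $s\mE_{\RegIndex+1}(s)^{1/2}$ contribution. On $R_{\mathrm{out}}$, where $u$ reduces to the explicit $f$, the strengthened decay \eqref{eq:DecayAssumptionL2Estimate} (stronger by one power of $|x|$ than \eqref{eq:DecayAssumptionsSobolevInSUSY}) will produce the explicit constants $C_{|I|,|J|}$ via direct integration.

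For the inner region, exploiting that $s$ is constant on $\Sigma_s$,
\begin{align*}
\|(s/t)\LGen^I\cCD^J u\|_{L^2(R_{\mathrm{in}})}
&=s\,\|t^{-1}\LGen^I\cCD^J u\|_{L^2(R_{\mathrm{in}})} \\
&\leq s\,\|t^{-1}\LGen^I\cCD^J u\|_{L^2(\Sigma_s\times\Compact)}.
\end{align*}
Summing over $|I|+|J|\leq\ell$ and applying lemma \ref{lem:DistributedDerivativesL2Estimate} then produces a bound of $\lesssim s\,\mE_{\RegIndex+1}(s)^{1/2}$, as required.

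For the outer region, observe that $\LGen$ and $\cCD$ commute since one acts on Minkowski and the other on $\Compact$, so $\LGen^I\cCD^J u=\cCD^J\LGen^I f$ on $R_{\mathrm{out}}$. Each Lorentz generator in $\LGen$ has Cartesian coefficients bounded by $t$, hence $|\LGen^I f|_E\lesssim\sum_{|I'|\leq|I|}t^{|I'|}|\partial^{I'}f|_E$. The bound $t\leq|x|+1$ on $R_{\mathrm{out}}$ together with \eqref{eq:DecayAssumptionL2Estimate} gives $|\LGen^I f|_E\lesssim C_{|I|}|x|^{-(n+1)/2}$, and applying $\cCD^J$, which acts only through the bounded Christoffels of $\cMet$ since $f$ is $\omega$-independent, preserves this decay rate to give $|\LGen^I\cCD^J f|_E\lesssim C_{|I|,|J|}|x|^{-(n+1)/2}$. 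The region $R_{\mathrm{out}}$ on $\Sigma_s$ corresponds to $|x|\geq(s^2-1)/2$ (obtained by solving $|x|\geq\sqrt{s^2+|x|^2}-1$), and bounding $(s/t)\leq 1$ and integrating in polar coordinates yields
\begin{align*}
\|(s/t)\LGen^I\cCD^J f\|_{L^2(R_{\mathrm{out}})}^2
&\lesssim C_{|I|,|J|}^2\int_{(s^2-1)/2}^\infty r^{-2}\,dr \\
&\lesssim C_{|I|,|J|}^2,
\end{align*}
which is absorbed into the $C_{|I|,|J|}$ term of \eqref{eq:WeightedDistributedDerivativesL2Estimate}.

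The main subtlety lies in applying lemma \ref{lem:DistributedDerivativesL2Estimate} to a $u$ that does not vanish at spatial infinity but only matches the prescribed decaying $f$ there: the underlying Hardy inequality in lemma \ref{lem:HardyInequality} requires $\sum_i\|Y_iu\|_{L^2(\Sigma_s)}$ to be finite, and this is precisely what the stronger decay \eqref{eq:DecayAssumptionL2Estimate} ensures, whereas the weaker decay assumed in lemma \ref{lem:SobolevOnHyperboloidsInSUSY} would not suffice. Adding the two regional bounds and summing over $|I|+|J|\leq\ell$ then yields the claimed estimate \eqref{eq:WeightedDistributedDerivativesL2Estimate}.
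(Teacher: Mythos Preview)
Your proof is correct and follows essentially the same approach as the paper: both split $\Sigma_s\times\Compact$ at $|x|=t-1$, handle the inner region by writing $(s/t)=s\cdot t^{-1}$ and invoking lemma \ref{lem:DistributedDerivativesL2Estimate}, and handle the outer region by bounding $(s/t)\leq 1$, using $u=f$ with the decay \eqref{eq:DecayAssumptionL2Estimate}, and integrating $r^{-2}$ over $\{r\geq (s^2-1)/2\}$. Your additional remarks on why $\cCD^J$ preserves the decay of $f$ and on the integrability needed for the Hardy step are helpful elaborations but do not constitute a different argument.
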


\begin{proof}
We will consider separately the regions $|x|\leq t-1$ and $|x|>t-1$. The estimate in the region $|x|\leq t-1$ follows by applying Lemma \ref{lem:DistributedDerivativesL2Estimate} with an additional factor of $s$. 
Next consider the region $|x|>t-1\geq t_0-1$ where we let $t_0=(s^2+1)/2$ be the value of $t$ at which $\Sigma_s$ intersects $|x|=t-1$. Using assumption \eqref{eq:DecayAssumptionL2Estimate} we find 
\begin{align*}
&{} \| (s/t)\LGen^{I}\cCD^{J} u\|_{L^2(\Sigma_s\times\Compact\cap\{|x|>t-1\}) }^2 \\
\leq{}& \int_{\Sigma_s\times\Compact\cap\{|x|>t_0-1\}}|\LGen^{I}\cCD^{J}u|_E^2\di x \di \mu_\cMet \\
\leq{}& C \int_{\Sigma_s\cap\{|x|>t_0-1\}} |\LGen^{I}\cCD^{J}f|_E^2\di x \\
\leq{}& C C_{|I|,|J|}^2 \int_{\sphereN{n-1}}\int_{\Sigma_s \cap\{|x|\geq t_0-1\}}(|r|^{-(n+1)/2})^2 |r|^{n-1}\di r \di\omega_{\sphereN{n-1}} \\
\leq{}& C C_{|I|,|J|}^2 \int_{\sphereN{n-1}}\int_{\Sigma_s \cap\{|x|\geq t_{0}-1\}}r^{-2} \di r \di\omega_{\sphereN{n-1}} \\
\leq{}& C C_{|I|,|J|}^2.
\end{align*} 
Adding together the above estimate over all appropriate multi-indices yields \eqref{eq:WeightedDistributedDerivativesL2Estimate}.
\end{proof}

We next use lemma \ref{lem:SobolevOnHyperboloidsInSUSY} to obtain $L^\infty$ estimates for terms which appear as factors in the right hand side of \eqref{eq:DerivsDistibutedExample}.

\begin{corollary}[Higher-order Sobolev estimates]\label{cor:SobolevHighDerivativesOnHyperboloidsInSUSY}
Let $n\geq 7$. Let $\dRegIndex, \tnu, u_{\mu\nu},f_{\mu\nu}$ be as defined in lemma \ref{lem:SobolevOnHyperboloidsInSUSY}. 
Then for $|I|+|J|=\ell \in \Naturals$ there is a constant $C$ such that 
\begin{equation} \aligned \label{eq:SobolevHighDerivativesOnHyperboloidsInSUSY}
&{}\sup_{\Sigma_s\times\Compact} \Big( s^{4\decayRate}|\LGen^{I} \cCD^{J} u|_E^2 + s^{4\decayRate-2}|(t/s) \LGen^{I} \cCD^{J} u|_E^2 \Big) \\
&{}\leq  C\sum_{|I|+2j\leq \tnu+\ell+1} \mE[0;\LGen^I (\cLap)^j u;s]+ C \sum_{|I|\leq \tnu+\ell-1} C_{|I|}^2.
\endaligned \end{equation}
\end{corollary}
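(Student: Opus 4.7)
The plan is to apply lemma \ref{lem:SobolevOnHyperboloidsInSUSY} to the tensor field $w = \LGen^I \cCD^J u$ in place of $u$. This immediately delivers both desired suprema (after converting the second) provided I can (a) verify that the eventual-prescription hypothesis on $w$ inherits suitably from the hypothesis on $u$ and (b) bound the resulting integral quantity $\sum_{|K|\leq \tnu}\sum_i \int |Y_i \LGen^K w|_E^2$ by a finite sum of the energies $\mE[0;\LGen^{I'}(\cLap)^j u;s]$ with $|I'|+2j \leq \tnu + \ell + 1$.

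For (a): since $f$ is independent of the internal coordinates, $\cCD^J u = 0$ in $|x|\geq t-1$ whenever $|J|\geq 1$, so the prescribed function for $w$ is trivially zero there. When $|J|=0$, the prescribed function is $\LGen^I f$. In the region $|x|\geq t-1$ one has $t\sim |x|$, and each Lorentz boost $Z_{0i}=tY_i$ acting on a function of $x$ contributes a factor of $t$ together with one Euclidean derivative, the combined effect preserving the decay exponent $|x|^{-(n-1)/2-|K|}$ under $\bCD^K \LGen^I$. Hence the hypothesis \eqref{eq:DecayAssumptionsSobolevInSUSY} transfers to $\tilde f = \LGen^I f$ with constants $C_{|K|+|I|}$, and these constants appear on the right side of \eqref{eq:SobolevHighDerivativesOnHyperboloidsInSUSY} via the final sum.

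For (b): the vector fields $Y_i$ and $\LGen$ act on the Minkowski factor while $\cCD$ and $\cLap$ act on $\Compact$, so all of these commute with $\cCD^J$. Hence $Y_i \LGen^K \LGen^I \cCD^J u = \cCD^J (Y_i \LGen^{K+I} u)$ for some ordering of $\LGen^{K+I}$ with length $|K|+|I|\leq \tnu + |I|$. Pointwise in $(t,x)$, lemma \ref{lem:CompactEllipticEst} gives $\|\cCD^J Y_i \LGen^{K+I} u\|_{L^2(\Compact)}^2 \lesssim \|(\cLap)^j Y_i \LGen^{K+I} u\|_{L^2(\Compact)}^2 + \|Y_i \LGen^{K+I} u\|_{L^2(\Compact)}^2$ when $|J|=2j$, and an analogous estimate with one $\cCD$-derivative kept explicit when $|J|=2j+1$ (the leftover $\cCD$-derivative being absorbed by the $\langle \bCD^\ica u,\bCD_\ica u\rangle_E$ term inside the energy). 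Integrating over $\Sigma_s$ and summing, each term is controlled by an energy $\mE[0;\LGen^{I'}(\cLap)^j u;s]$ with total order $|I'|+2j \leq \tnu + |I|+|J| + 1 = \tnu + \ell + 1$, as required.

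Finally, for the weighted term $s^{4\decayRate-2}|(t/s)\LGen^I \cCD^J u|_E^2$, I split $\Sigma_s$ at $|x|=t-1$. In the interior $|x|\leq t-1$, lemma \ref{lem:tBounds} yields $s^2 \geq 2t-1$, so $s^{4\decayRate-2}(t/s)^2 = s^{4\decayRate-4}t^2 \leq s^{4\decayRate}$, and the bound reduces to the first inequality already established. In the exterior region $|x|>t-1$, we have $\cCD^J u = \cCD^J f = 0$ for $|J|\geq 1$, while for $|J|=0$ the relations $t\sim |x|$ and $s^2 \sim 2t(t-|x|)\lesssim 2t$ give $(t/s)^2 \lesssim t$, so $s^{4\decayRate-2}(t/s)^2|\LGen^I f|_E^2 \lesssim t^{2\decayRate-1}\cdot t \cdot t^{-(n-1)} = t^{-n/2}$, which is uniformly bounded by $C_0^2$. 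The only real obstacle is the careful bookkeeping of multi-index orders and the odd-$|J|$ elliptic estimate that justifies the extra ``$+1$'' in $\tnu+\ell+1$; the commutator structure itself is trivial since the internal and Minkowski derivatives act on disjoint factors of the product.
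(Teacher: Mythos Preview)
Your approach is essentially the same as the paper's: apply the Sobolev estimate of lemma \ref{lem:SobolevOnHyperboloidsInSUSY} to $w=\LGen^I\cCD^J u$, then convert the leftover $\cCD^J$ into powers of $\cLap$ via the elliptic estimates, and treat the $(t/s)$-weighted term by splitting at $|x|=t-1$. The paper packages this slightly differently---it reopens the proof of lemma \ref{lem:SobolevOnHyperboloidsInSUSY} and absorbs $\cCD^J$ directly into the $\Compact$-Sobolev step by replacing $H^{\dRegIndex}(\Compact)$ with $H^{\dRegIndex+\tilde\jmath}(\Compact)$, where $\tilde\jmath$ is the least even integer $\geq|J|$---but the content is the same.

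Two points need correcting. First, in the exterior region your claim $(t/s)^2\lesssim t$ is false: it is equivalent to $t\lesssim s^2$, whereas in $\{|x|>t-1\}\cap\Sigma_s$ one has precisely the reverse, $s^2=t^2-|x|^2<2t-1$. The right way is to combine the factors as $s^{4\decayRate-2}(t/s)^2=s^{4\decayRate-4}t^2$ and use $s^2\lesssim t$ (valid for $n\geq6$ so that the exponent $4\decayRate-4\geq0$) to get $s^{4\decayRate-4}t^2\lesssim t^{2\decayRate}$; multiplied by $|\LGen^I f|_E^2\lesssim C_{|I|}^2 t^{-(n-1)}$ this gives $t^{-n/2}$, which is your stated conclusion. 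Second, your mechanism for odd $|J|$---absorbing a leftover $\cCD$ into the $\langle\bCD^\ica u,\bCD_\ica u\rangle_E$ term of the energy---is delicate, since $\mE[0;w;s]$ does not obviously dominate $\int_{\Sigma_s\times\Compact}|\cCD w|_E^2$ (the curvature term $-2\langle R\circ w,w\rangle_E$ has no sign). The paper sidesteps this here by rounding $|J|$ up to the next even integer $\tilde\jmath\leq|J|+1$ and using only the $|Y_i\cdot|$ part of the energy; this is what produces the ``$+1$'' in $\tnu+\ell+1$ and is the cleaner justification.
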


\begin{proof}
We consider the left most term in \eqref{eq:SobolevHighDerivativesOnHyperboloidsInSUSY} first. 
Let $\tilde{\jmath}$ be the smallest even integer such that $\tilde{\jmath}\geq |J|$. In particular this means $|I|+|J|\leq |I|+\tilde{\jmath}\leq \ell+1$.  Recall that $\dRegIndex$ is the smallest even integer larger than $d/2 $ and $\tnu$ is the smallest integer greater than $ n/2 +\dRegIndex$.
Applying lemma \ref{lem:CompactEllipticEst} yields
\begin{align*}
\sup_\Compact |\cCD^{J} u|_E
\leq \|u\|_{H^{\dRegIndex+\tilde{\jmath}}(\Compact)}
\leq \|(\cLap)^{(\dRegIndex+\tilde{\jmath})/2} u\|_{L^2(\Compact)} + \|u\|_{L^2(\Compact)} .
\end{align*}
Thus, using in particular \eqref{eq:SobolevPartialGoal}, we have
\begin{align*}
&{}\sup_{(t,x,\omega)\in\Sigma_s\times\Compact}s^{4\decayRate}|\LGen^{I}  \cCD^{J} u(t,x^i,\omega)|_E^2 \\
\lesssim{}& \sum_{|I_1|\leq \tnu-\dRegIndex} \sum_{i=1}^n \| \sup_\Compact (Y_i \LGen^{I_1} \LGen^{I}\cCD^{J} u ) \|_{L^2(\Sigma_s)}^2+\sum_{|I_1|\leq \tnu-1} C_{I_1}^2\\
\lesssim{}& \sum_{|I_1|\leq \tnu-\dRegIndex} \sum_{i=1}^n \left( \| Y_i \LGen^{I+I_1} u \|_{L^2(\Sigma_s\times\Compact)}^2+ \| Y_i \LGen^{I+I_1} (\cLap)^{(\dRegIndex+\tilde{\jmath})/2}u \|_{L^2(\Sigma_s\times\Compact))}^2\right) \\
&{}+C\sum_{|I_1|\leq \tnu-1} C_{I_1}^2 \\
\lesssim{}& \sum_{|I|+2j\leq \tnu+\ell+1} \mE[0;\LGen^{I} (\cLap)^j u;s]+C\sum_{|I|\leq \tnu-1} C_{I}^2. 
\end{align*}

To complete the proof for the second term of \eqref{eq:SobolevHighDerivativesOnHyperboloidsInSUSY} we observe that $s\geq Ct^{1/2}$ in the region $|x|\leq t-1$ while we only have $s\leq t\leq r$ in the region $|x|>t-1$. Since $n\geq7$ we have $\decayRate\geq 1$ and thus
\begin{align*}
\sup_{\Sigma_s\times\Compact} s^{4\decayRate-2}|(t/s)Z^I\cCD^{J} u|_E^2
&{}\lesssim \sup_{\Sigma_s\times\Compact\cap \{ |x|\leq t-1 \}} (t^2/s^4) s^{4\decayRate} |Z^I\cCD^{J} u|_E^2 \\
&{}+ \sup_{\Sigma_s\times\Compact\cap \{ |x|> t-1 \}} s^{4\decayRate-4} r^2 |Z^I\cCD^{J} f|_E^2\\
&{}\lesssim \sum_{|I|+2j\leq \tnu+\ell+1} \mE[0;\LGen^I (\cLap)^j u;s]+ \sum_{|I|\leq \tnu+\ell-1} C_I^2\\
&{}+ C_I^2 \sup_{\Sigma_s\times\Compact\cap \{ |x|> t-1 \}}  r^{(n-2)-2} r^{-(n-1)}.
\end{align*}
Note in the final line we applied \eqref{eq:DecayAssumptionsSobolevInSUSY} and the first estimate of \eqref{eq:SobolevHighDerivativesOnHyperboloidsInSUSY}.
\end{proof}

\section{Proof of stability}\label{sec:MainProof}
\subsection{Stability for the reduced Einstein equations}
We now restate our main theorem \ref{thm:MainResult} in terms of the reduced Einstein equations. For convenience we translate the initial data of theorem \ref{thm:MainResult} to $\{t=4\}$. 

\begin{theorem}[Stability for the reduced Einstein equations]
\label{thm:ResultsReducedEquations}
Let $n,d\in\Integers^+$ be such that $n\geq9$  and let $\RegIndex\in\Naturals$ be an even integer strictly larger than $(n+d+8)/2$. Let $(\Reals^{1+n}\times\Compact,\bMet=\eta_{\Reals^{1+n}}+\cMet)$ be a spacetime with a supersymmetric compactification. 

Let $(\{t=4\}\times\Reals^n\times\Compact,\RedinitialMetric,\RedinitialDeriv)$ be Cauchy data for the reduced Einstein equations \eqref{eq:ReducedEinsteinEqs}. Assume that, for $|x|\geq 1$ with respect to Minkowski coordinates on $\Reals^{1+n}$, $(\RedinitialMetric,\RedinitialDeriv)=(g_{S}+\cMet,0)$ where $g_S$ is the Schwarzschild metric in the $\eta_{\Reals^{1+n}}$-wave gauge with parameter $\SchwarzschildMass\in[0,\infty)$. 

There is an $\epsilon>0$ such that, if the initial data satisfies
\begin{align}\label{eq:ThmReducedEEsSmallness}
\sum_{|I|\leq\RegIndex} \| \nabla[\RedinitialMetric]^I (\RedinitialMetric-\bMet|_{t=4})\|_{L^2(\Reals^{n}\times\Compact)}^2 
  +\sum_{|I|\leq\RegIndex-1} \|\nabla[\RedinitialMetric]^I \RedinitialDeriv\|_{L^2(\Reals^{n}\times\Compact)}^2 
+\SchwarzschildMass^2
\leq \epsilon,
\end{align}
then there is a future global solution $g_{\mu\nu}$ of the reduced Einstein equations \eqref{eq:ReducedEinsteinEqs} with initial data $(h,\partial_t h)|_{t=4}=(\RedinitialMetric,\RedinitialDeriv)$. Furthermore, there is the bound
\begin{align}
\sup_{(t,x,\omega)\in\Sigma_s\times\Compact}s^{4\decayRate}|g(t,x^i,\omega)-\bMet(t,x^i,\omega)|_E^2
&{}\lesssim \epsilon,
\end{align}
where $\decayRate$ was defined in \eqref{eq:DecayRate}. 
\end{theorem}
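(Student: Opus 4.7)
The plan is to run a bootstrap argument on the hyperboloidal foliation $\Sigma_s \times \Compact$, in the spirit of the Klein--Gordon analysis of H\"ormander \cite{Hormander} and LeFloch--Ma \cite{LeFlochMa}. Local existence for the reduced Einstein equations \eqref{eq:ReducedEinsteinEqs} is standard, so the first step is to propagate the Cauchy data prescribed on $\{t=4\}\times\Reals^n\times\Compact$ to an initial hyperboloid $\Sigma_{s_0}$ with $s_0$ slightly larger than $4$. Inside the light cone this is done by solving locally from $\{t=4\}$ on a short time interval; outside, the exact Schwarzschild--product form $g=g_S+\cMet$ is preserved because it already solves the vacuum equations, and the decay bounds of Theorem \ref{thm:higherDimensionalSchwarzschildExistsInWaveCoordinates} yield $|\partial^I(g_S-\eta_{\Reals^{1+n}})|_E \lesssim \SchwarzschildMass\,r^{-(n-2)-|I|}$, which for $n\geq 9$ is strictly faster than the exponents $(n-1)/2$ and $(n+1)/2$ required by the prescribed-function hypotheses of Lemma \ref{lem:SobolevOnHyperboloidsInSUSY} and Corollary \ref{corol:L2DistDerivsOnHyperboloidsInSUSY}. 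This transfers the smallness \eqref{eq:ThmReducedEEsSmallness} into $\mE_{\RegIndex+1}(s_0)\lesssim \epsilon$.

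Setting $h_{\mu\nu}=g_{\mu\nu}-\bMet_{\mu\nu}$ and $\gamma^{\ia\ib}=g^{\ia\ib}-\bMet^{\ia\ib}$, the reduced Einstein equations \eqref{eq:ReducedEinsteinEqs} rearrange into the Lichnerowicz-adapted form
\begin{align}
(\bMet+\gamma)^{\ia\ib}\bCD_\ia\bCD_\ib h_{\mu\nu} + 2(R[\bMet]\circ h)_{\mu\nu} = F_{\mu\nu}[h,\bCD h],
\end{align}
where $F$ absorbs the curvature cross-term from \eqref{eq:ReducedEinsteinEqs} together with the quadratic gradient nonlinearity $Q_{\mu\nu}$ from \eqref{def:nonlinQ}. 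This is precisely the equation to which the energy identity of Lemma \ref{lem:LichEnergyOnHyperboloids} applies, and for which the Lichnerowicz-type energy $\mE[\gamma;\cdot;s]$ is non-negative thanks to Riemannian linear stability of $(\Compact,\cMet)$ (Theorem \ref{thm:DaiWangWei}). Commuting with $\Gamma^I=\LGen^{I_1}\cLap^j$ preserves both the principal part on the left and the schematic structure of $F$, since the $\LGen$ are Killing or conformally Killing for $\eta_{\Reals^{1+n}}$ and commute with $\cLap$; this produces an analogous equation for $\Gamma^I h$, with right-hand side $\Gamma^I F$ plus commutator corrections from $[\Gamma^I,\,\gamma^{\ia\ib}\bCD_\ia\bCD_\ib]$.

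Under the bootstrap assumption $\mE_{\RegIndex+1}(s)\leq C_1\epsilon$ on a maximal interval $[s_0,S_*)$, Corollary \ref{cor:SobolevHighDerivativesOnHyperboloidsInSUSY} yields $\sup_{\Sigma_s\times\Compact}\bigl(s^{2\decayRate}+t^{\decayRate}\bigr)|\Gamma^I h|_E \lesssim (C_1\epsilon)^{1/2}$ for $|I|\leq\RegIndex+1-\tnu$; since $\decayRate\geq 7/4>1$ when $n\geq 9$, one has in particular $\sup t|\gamma|_E\lesssim (C_1\epsilon)^{1/2}$, activating the energy equivalence \eqref{eq:HyperboloidEnergyEquivalence}. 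Expanding $\Gamma^I F$ and the top-order commutator by the Leibniz rule produces a sum of products of the schematic forms $(\Gamma^{I_1}h)(\bCD^2\Gamma^{I_2}h)$ and $(\bCD\Gamma^{I_1}h)(\bCD\Gamma^{I_2}h)$ with $|I_1|+|I_2|\leq|I|\leq\RegIndex$. In every product the factor with fewer than $(\RegIndex+1)/2$ generators is placed in $L^\infty$ via Corollary \ref{cor:SobolevHighDerivativesOnHyperboloidsInSUSY}, gaining a factor $s^{-2\decayRate}$, while the complementary high-order factor is placed in $L^2$ by Lemma \ref{lem:DistributedDerivativesL2Estimate}, extracting a $t^{-1}\sim s^{-1}$ weight that pairs with the $s/t$ weight of the source integrand in Lemma \ref{lem:LichEnergyOnHyperboloids}(ii). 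The Schwarzschild tail contributes an additional $\SchwarzschildMass^2\leq\epsilon$ through Corollary \ref{corol:L2DistDerivsOnHyperboloidsInSUSY}. Summing over $|I|\leq\RegIndex$ and applying Cauchy--Schwarz one arrives at
\begin{align}
\mE_{\RegIndex+1}(s) \leq \mE_{\RegIndex+1}(s_0) + C\int_{s_0}^s (s')^{-2\decayRate}\,\mE_{\RegIndex+1}(s')\,\di s' + C\epsilon.
\end{align}
Since $n\geq 9$ gives $2\decayRate=(n-2)/2\geq 7/2>1$, the weight is integrable on $[s_0,\infty)$, and Gr\"onwall's inequality improves the bootstrap to $\mE_{\RegIndex+1}(s)\leq\tfrac12 C_1\epsilon$, extending the solution to $S_*=\infty$. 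The claimed pointwise decay $|h|_E^2\lesssim\epsilon\,s^{-4\decayRate}$ then follows directly from Corollary \ref{cor:SobolevHighDerivativesOnHyperboloidsInSUSY}.

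The hardest part is the careful bookkeeping of the top-order commutator $[\Gamma^I,\gamma^{\ia\ib}\bCD_\ia\bCD_\ib]\,h$, in which all $|I|$ derivatives land on the quasilinear coefficient $\gamma$: the factor $\Gamma^I\gamma$ cannot then be placed in $L^\infty$, so it must be controlled in $L^2$ by $\mE_{\RegIndex+1}^{1/2}$ and paired against a low-order factor $\bCD^2 h$ estimated in $L^\infty$ via Corollary \ref{cor:SobolevHighDerivativesOnHyperboloidsInSUSY}. The Hardy inequality (Lemma \ref{lem:HardyInequality}) together with the $s/t$ weight in the source integrand is what makes this pairing affordable. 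Demanding enough regularity for both the low- and high-frequency factors simultaneously forces $\RegIndex\gtrsim 2\tnu$, which is the origin of the quantitative lower bound $\RegIndex>(n+d+8)/2$ in the statement. The generous pointwise decay supplied by taking $n\geq 9$ is exactly what removes any need to detect weak null structure in $F$, in sharp contrast to the situation in $(1+3)$ dimensions.
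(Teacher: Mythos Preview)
Your overall architecture matches the paper's proof closely: bootstrap on hyperboloids, commutation by $\Gamma^I$, the Lichnerowicz-adapted energy identity of Lemma~\ref{lem:LichEnergyOnHyperboloids}, and the $L^\infty$/$L^2$ product estimate on the nonlinearities. However, the decay bookkeeping contains a genuine error that breaks the argument as written.

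The claim ``extracting a $t^{-1}\sim s^{-1}$ weight'' is false on $\Sigma_s$: by Lemma~\ref{lem:tBounds} one only has $t\lesssim s^2$ in the region $|x|\leq t-1$, so near the light cone $t^{-1}$ is of order $s^{-2}$, not $s^{-1}$. Concretely, the $(s/t)$ weight in the source integrand of Lemma~\ref{lem:LichEnergyOnHyperboloids}\eqref{pt:EnergyInequality} must be spent on $\partial_t u$ (to recover $\|(s/t)\partial_t u\|_{L^2}\lesssim\mE^{1/2}$), so it is \emph{not} available to pair with the $t^{-1}$ from Lemma~\ref{lem:DistributedDerivativesL2Estimate}. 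If you instead try to move $(s/t)$ onto the high-order factor, you are left with an unweighted $\|\partial_t u\|_{L^2}$, which the energy does not control. The paper's remedy is the insertion $1=(t/s)(s/t)$ \emph{inside} $F$: the low-order factor carries $(t/s)$ and is bounded in $L^\infty$ by the second estimate of Corollary~\ref{cor:SobolevHighDerivativesOnHyperboloidsInSUSY}, contributing $s^{-(2\decayRate-1)}$; the high-order factor carries $(s/t)$ and is bounded in $L^2$ by Corollary~\ref{corol:L2DistDerivsOnHyperboloidsInSUSY}, contributing a factor $s$. The product gives an integrand of order $s^{-(2\decayRate-2)}$, not $s^{-2\decayRate}$.

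This two-power loss is exactly the content of the restriction $n\geq 9$: integrability in the Gr\"onwall step requires $2\decayRate-2>1$, i.e.\ $n>8$, which is the paper's condition~\eqref{eq:IntegrabilityCondition}. Your stated condition $2\decayRate>1$ would already hold for $n\geq 5$; if your counting were right, the theorem would be dramatically stronger than claimed. Once you replace the informal ``$t^{-1}\sim s^{-1}$'' step by the $(t/s)(s/t)$ splitting and cite the weighted versions of Corollaries~\ref{corol:L2DistDerivsOnHyperboloidsInSUSY} and~\ref{cor:SobolevHighDerivativesOnHyperboloidsInSUSY}, the rest of your outline (including the top-order commutator discussion and the regularity count $\RegIndex>(n+d+8)/2$) lines up with the paper.
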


\begin{proof}
Let the perturbation and inverse perturbation be denoted
\begin{align}
h_{\mu\nu} ={}& g_{\mu\nu}-\bMet_{\mu\nu}, \\
H^{\mu\nu} ={}& g^{\mu\nu}-\bMet^{\mu\nu}.
\end{align}
Since $g$ is a solution of the reduced Einstein equation \eqref{eq:ReducedEinsteinEqs}, it follows that
\begin{equation}\label{eq:ReducedEinsteinEqsInPerturbedMetic}
\begin{aligned}
&{}(\bMet^{\ia\ib}+H^{\ia\ib})\bCD_\ia \bCD_\ib h_{\mu\nu}+2(R[\bMet]\circ h)_{\mu\nu} \\
&{}= Q_{\mu\nu}[g](\bCD h, \bCD h) + F_{\mu\nu}(H,h),
\end{aligned}\end{equation}
where $Q_{\mu\nu}$ is defined in \eqref{def:nonlinQ} and $F_{\mu\nu}$ is defined by
\begin{align}
F_{\mu\nu}(H,h)&{}=H^{\ia\ib}\left( h_{\ia\id}\Riem[\bMet]^\id{}_{\mu\nu\ib}+h_{\ia\id}\Riem[\bMet]^\id{}_{\nu\mu\ib}\right)\notag\\
&{}+ H^{\ia\ib}\left( h_{\mu\id}\Riem[\bMet]^\id{}_{\ia\nu\ib}+h_{\nu\id}\Riem[\bMet]^\id{}_{\ia\mu\ib}\right).
\end{align}
By commuting the symmetries $\LGen^I(\cLap)^j$ through the system \eqref{eq:ReducedEinsteinEqsInPerturbedMetic} we obtain
\begin{align}\label{eq:CommutedReducedEinsteinEqs} 
(\bMet^{\ia \ib} +H^{\ia\ib})\bCD_\ia \bCD_\ib (\LGen^I(\cLap)^j h_{\mu\nu})-2(R[\bMet]\circ \LGen^I(\cLap)^j h)_{\mu\nu} = \sum_{i=1}^3 F^{i,I,j}_{\mu\nu},
\end{align}
where 
\begin{equation}\aligned
F^{1,I,j}_{\mu\nu}&{}=\LGen^I(\cLap)^j Q_{\mu\nu}[g](\bCD h, \bCD h) \,,\\
F^{2,I,j}_{\mu\nu}&{}=\LGen^I(\cLap)^j F_{\mu\nu}(H,h)\,, \\
F^{3,I,j}_{\mu\nu}&{}=[\LGen^I(\cLap)^j, H^{\ia\ib}\bCD_\ia \bCD_\ib]h_{\mu\nu}\,.
\endaligned\end{equation}
The symmetry boosted energy is given by 
\begin{align}
\mE_{k+1}(s)
&{}=\sum_{|I|+2j\leq k} \mE[H;\LGen^{I} (\cLap)^j g;s].
\end{align}
From lemma \ref{lem:LichEnergyOnHyperboloids} and the Cauchy-Schwarz inequality we obtain
\begin{equation}\aligned\label{eq:SumEnergyInequality}
\mE_{\RegIndex+1}(s')^{1/2}
&{}\leq  \mE_{\RegIndex+1}(4)^{1/2} \\
&{}+\sum_{|I|+2j\leq\RegIndex}  \int_{4}^{s'}\left( \int_{\Sigma_s\times\Compact} \Big( \sum_{i=1}^3 |F^{i,I,j}|_E^2 +|G^{I,j}|_E^2 \Big)\di y\diCVol\right)^{1/2} \di s ,
\endaligned\end{equation}
where the $G^{I,j}$ terms arise from applying $\LGen^I(\cLap)^ j$ to the terms involving $\bCD \gamma$ or $\partial_t \gamma$ on the right side of the energy equality \eqref{eq:LichEnergyOnHyperboloidsInequality}. In particular, these can be bounded by 
\begin{align} \label{eq:GIjEstimate}
|G^{I,j}|_E^2 \leq C |\bCD H|_E^2|\LGen^I(\cLap)^j \bCD h|_E^2.
\end{align} 

The reduced field equations \eqref{eq:ReducedEinsteinEqsInPerturbedMetic} are a system of quasilinear, quasidiagonal wave equations for the spacetime metric $h_{\mu\nu}$. The existence of unique local solutions emanating from Cauchy data is standard \cite[Theorem 4.6 Appendix III]{ChoquetBruhatOUP}.

The proof then follows a boot-strap argument (or continuous induction). The goal is to prove that there exists a constant $ C>0$ and $\epsilon>0$ such that: if $\mE_{\RegIndex+1}(4)+C_S<\epsilon$ and $\forall s:$ $\mE_{\RegIndex+1}(s)\leq C\epsilon$, then $\forall s:$ $\mE_{\RegIndex+1}(s)\leq \epsilon +C\epsilon^2$ and hence $\mE_{\RegIndex+1}(s)\leq C\epsilon/2$. Note that there is clearly no loss of generality in placing our initial data at $t=4$. 

We consider the integral term on the right-hand-side in \eqref{eq:SumEnergyInequality} as the sum of integrals over $\Sigma_s \cap \{|x|\leq t-1\}$ and over $\Sigma_s \cap \{|x|> t-1\}$. Our approach is that, for sufficiently small $\SchwarzschildMass$, in the latter exterior region the solution is identically the product of Schwarzschild with the internal manifold. Thus in the region $|x|\geq t-1$ the perturbation $h_{\mu\nu}$ is only nonzero on its Minkowski indices and on these indices it is identically Schwarzschild. We note that sufficiently small compactly supported initial data on $\{t=4\}\cap\{|x|\leq 1\}$ can be extended to compactly supported initial data on $\Sigma_4$ \cite[Chapter 39]{LeFlochMaBook}.

Recall from section \ref{sec:HighDimSchwarz} that the difference between components of the Minkowski metric and the Schwarzschild metric in wave coordinates decay as $\SchwarzschildMass r^{-n+2}$ and the Christoffel symbols decay as $\SchwarzschildMass r^{-n+1}$. Along a geodesic parametrised by $\lambda$, one has $\di^2 x^i/\di\lambda^2 =\Gamma^i_{jk}(\di x^j/\di\lambda)(\di x^k/\di\lambda)$. Since $\SchwarzschildMass r^{-n+1}$ is integrable in $r$, there are geodesics along which $t$ and $r$ grow linearly and the $(\di x^j/\di\lambda)$ approach constant values, not all of which are vanishing. In particular, $\di r/\di t$ asymptotically approaches a constant, and this constant is $1$ for null geodesics. The next-to-leading order term in the geodesic equation arises from the metric, so it is of the form $C r^{-n+2}$, which is again integrable. 
Furthermore, the smaller the mass $\SchwarzschildMass$ the sooner this asymptotic behaviour comes to dominate. In particular, if $\SchwarzschildMass$ is sufficiently small, then any causal curve launched from within $\Sigma_4 \cap \{|x|\leq t-2\}$ can never reach the region where $|x|\geq t-1$. Furthermore, by uniqueness of solutions to quasilinear wave equations, since the initial data on $\Sigma_4$ is identically Schwarzschild for $|x|>t-2$, the solution is identically Schwarzschild for $|x|>t-1$. In particular, when estimating the components of the solution to \eqref{eq:CommutedReducedEinsteinEqs}, we can use the Sobolev lemma \ref{lem:SobolevOnHyperboloidsInSUSY} and corollary \ref{corol:L2DistDerivsOnHyperboloidsInSUSY} on hyperboloids with eventually prescribed functions. (The conclusion of this paragraph is essentially proposition 2.3 of \cite{LeFlochMa}.)

The estimate \eqref{eq:GammaHypDecay} required by lemma \ref{lem:LichEnergyOnHyperboloids} is established by combining \eqref{eq:SobolevOnHyperboloidsInSUSYtDecay} with the bootstrap assumptions and noting that since $n\geq9$ we certainly have $\decayRate>1$. Similarly since $n\geq9$ the decay assumptions \eqref{eq:DecayAssumptionL2Estimate} in corollary \ref{corol:L2DistDerivsOnHyperboloidsInSUSY} and \eqref{eq:DecayAssumptionsSobolevInSUSY} in lemma \ref{lem:SobolevOnHyperboloidsInSUSY} are satisfied.

We are now in a position to apply the results from section \ref{sec:PreliminaryL2Linfty} to the nonlinearities in \eqref{eq:SumEnergyInequality}. 
In general we will distribute $(s/t)(t/s)=1$ across the terms and estimate high-derivative terms with a factor of $(s/t)$ using corollary \ref{corol:L2DistDerivsOnHyperboloidsInSUSY} and low-derivative terms with a factor of $(t/s)$ using corollary \ref{cor:SobolevHighDerivativesOnHyperboloidsInSUSY}. We begin by estimating the term $G^{I,j}$. Using \eqref{eq:GIjEstimate} we find
\begin{align}
&{}\sum_{|I|+2j\leq\RegIndex}\|G^{I,j}\|_{L^2(\Sigma_s\times\Compact) } \notag\\
&{}\lesssim \sum_{|I|+|J|\leq\RegIndex} \left(\int_{\Sigma_s\times\Compact} |(t/s)\bCD H|_E^2|(s/t)\LGen^I\cCD^{J} \bCD h|_E^2\di y \diCVol\right)^{1/2} \notag\\
&{}\leq \sup_{\Sigma_s\times\Compact} \left(\big|(t/s)\bCD h\big|_E\right) 
	\left(\int_{\Sigma_s\times\Compact} \big|(s/t) \LGen^I\cCD^{J} \bCD  h\big|_E^2 \di y \diCVol\right)^{1/2} \notag\\
&{}\lesssim \frac{1}{s^{2\decayRate-1}} \Big( \mE_{\tnu+3}(s)^{1/2} + \SchwarzschildMass \Big)\Big(s\mE_{\RegIndex+1}(s)^{1/2}+ \SchwarzschildMass \Big).
\end{align}

The term $F_{\mu\nu}^1$ involves the standard quadratic derivative nonlinearities of the Einstein equations. Their weak-null structure is of course not relevant here since the Minkowski dimension is taken so high. We first look at what type of terms are contained in $F_{\mu\nu}^1$:
\begin{equation}\aligned\label{eq:F1DecompDeriv}
&{}\sum_{|I|+2j\leq\RegIndex}\|F^{1,I,j}_{\mu\nu}\|_{L^2(\Sigma_s\times\Compact) }\\
&{}\lesssim \sum_{|I|+|J|\leq\RegIndex} \left(\int_{\Sigma_s\times\Compact}
	\big|(\bMet+H)^{-1}\big|_E^2\big|\LGen^{I}\cCD^{J}(\bCD h \bCD h)\big|_E^2 \di y\diCVol \right)^{1/2} \\
&{}+ \sum_{\substack{|I_i|+|J_i|\leq\RegIndex\\|I_1|+|J_1|\geq 1}} \left(\int_{\Sigma_s\times\Compact}
	\big|\LGen^{I_1}\cCD^{J_1}h\big|_E^2\big|\LGen^{I_2}\cCD^{J_2}(\bCD h \bCD h)\big|_E^2 \di y\diCVol \right)^{1/2}.
\endaligned \end{equation}
We treat the first term on the right hand side of \eqref{eq:F1DecompDeriv} since the second term is higher-order and thus easier to estimate. Once again we estimate high-derivative terms with a factor of $(s/t)$ using corollary \ref{corol:L2DistDerivsOnHyperboloidsInSUSY} and low-derivative terms with a factor of $(t/s)$ using corollary \ref{cor:SobolevHighDerivativesOnHyperboloidsInSUSY}. This yields,
\begin{align}
&{}\sum_{\substack{|I|+|J|\leq\RegIndex}} \left(\int_{\Sigma_s\times\Compact}
	\big|(\bMet+H)^{-1}\big|_E^2\big|\LGen^{I}\cCD^{J}(\bCD h \bCD h)\big|_E^2 \di y\diCVol \right)^{1/2} \notag\\
&{}\lesssim \sum_{\substack{|I_i|+|J_i|\leq\RegIndex\\|I_2|+|J_2|\leq \frac{\RegIndex}{2}+1}} \left(\int_{\Sigma_s\times\Compact}
	C\big|\LGen^{I_1}\cCD^{J_1}\bCD h \big| \big|\LGen^{I_2}\cCD^{J_2} \bCD h)\big|_E^2 \di y\diCVol \right)^{1/2},
\end{align}
where by symmetry we can assume $|I_2|+|J_2|\leq \frac{\RegIndex}{2}+1$. After using $(s/t)(t/s)=1$ we find
\begin{align}
&{}\sum_{\substack{|I_i|+|J_i|\leq\RegIndex\\|I_2|+|J_2|\leq \frac{\RegIndex}{2}+1}} \left(\int_{\Sigma_s\times\Compact}
	C\big|(s/t)\LGen^{I_1}\cCD^{J_1}\bCD h \big| \big|(t/s)\LGen^{I_2}\cCD^{J_2} \bCD h)\big|_E^2 \di y\diCVol \right)^{1/2} \notag \\
&{}\lesssim \sup_{\Sigma_s\times\Compact} 
	\Big(\sum_{|I_2|+|J_2|\leq \frac{\RegIndex}{2}+1}
		|(t/s)\LGen^{I_2}\cCD^{J_2}\bCD  h|_E\Big)\notag\\
&{}	\times\sum_{|I_1|+|J_1|\leq\RegIndex}\left(\int_{\Sigma_s\times\Compact} 
	\left|(s/t) \LGen^{I_1}\cCD^{J_1}\bCD h\right|_E^2 \di y \diCVol\right)^{1/2} \notag\\
&{}\lesssim \frac{1}{s^{2\decayRate-1}} 
	\Big(\sum_{|I|+2j\leq \tnu+\frac{\RegIndex}{2}+3} 
	\mE[0;\LGen^I (\cLap)^j u;s]^{1/2}+ C_S\sum_{|I|\leq \tnu+\frac{\RegIndex}{2}} C_I^2\Big)
	\Big(s\mE_{\RegIndex+1}(s)^{1/2}+ \SchwarzschildMass\Big)\notag\\
&{}\lesssim \frac{1}{s^{2\decayRate-2}} 
	\Big( \mE_{\tnu+\frac{\RegIndex}{2}+4}(s)^{1/2} + \SchwarzschildMass\Big)
	\Big(\mE_{\RegIndex+1}(s)^{1/2}+ \SchwarzschildMass\Big). \label{eq:EstimateF1Explicit}
\end{align} 

The term $F_{\mu\nu}^2$ involves the new nonlinearities which are only nonzero when both $\mu,\nu\in\{\ica,\ldots,\icb\}$. This means we can control $F_{\mu\nu}^2$ by the following
\begin{align}
\sum_{|I|+2j\leq\RegIndex}\|F^{2,I,j}_{\mu\nu}\|_{L^2(\Sigma_s\times\Compact) } &{}\lesssim \sup_{\Sigma_s\times\Compact} \Big( \sum_{|I_0|\leq N}\big|\cCD^{I_0}\Riem[\cMet]\big|\Big) \notag \\
&{} \times \sum_{|I_i|+|J_i|\leq\RegIndex} \left(\int_{\Sigma_s\times\Compact} \big|\LGen^{I_1}\cCD^{J_1} h\big|_E^2 \big|\LGen^{I_2}\cCD^{J_2}h\big|_E^2 \di y\diCVol\right)^{1/2}. \label{eq:F2DistDeriv}
\end{align}
The Riemann curvature components of $\cMet$ are bounded (since $\Compact$ is compact) which allows us to control the first factor in \eqref{eq:F2DistDeriv}. To estimate the second factor in \eqref{eq:F2DistDeriv} we follow the same procedure as in $F^1_{\mu\nu}$, by controlling high-derivatives with a factor of $(s/t)$ using corollary \ref{corol:L2DistDerivsOnHyperboloidsInSUSY} and low-derivatives with a compensating factor of $(t/s)$ using corollary \ref{cor:SobolevHighDerivativesOnHyperboloidsInSUSY}. The result of this procedure leads to a term controlled by \eqref{eq:EstimateF1Explicit}. 

The final term $F_{\mu\nu}^3$ is a commutator involving the quasilinear perturbation of the principal part of the differential operator. Note first the identity
\begin{align}
\sum_{|I|+2j\leq\RegIndex}|F^{3,I,j}_{\mu\nu}|_E 
&{}\leq  C \sum_{\substack{|I_i|+|J_i|\leq\RegIndex\\|I_2|+|J_2|\leq \RegIndex-1}} |\LGen^{I_1}\cCD^{J_1} H|_E |\LGen^{I_1}\cCD^{J_1} \bCD \bCD h|_E.
\end{align}
Once again we distribute the product $(s/t)(t/s)=1$ across the two terms appearing here depending on where the derivatives land. The term with high-derivatives gains a factor of $(s/t)$ and is controlled using corollary \ref{corol:L2DistDerivsOnHyperboloidsInSUSY} while the term with low-derivatives absorbs a compensating factor of $(t/s)$ and is estimated using corollary \ref{cor:SobolevHighDerivativesOnHyperboloidsInSUSY}. Note that when the term $\LGen^{I_2}\cCD^{J_2} (\bCD \bCD h)$ is estimated in $L^\infty$ the Sobolev inequality will lead to a symmetry boosted energy at order $\tnu+\frac{\RegIndex}{2}+5$. We eventually obtain
\begin{align*}
&{}\sum_{|I|+2j\leq\RegIndex}\|F^{3,I,j}_{\mu\nu}\|_{L^2(\Sigma_s\times\Compact) }
\lesssim \frac{1}{s^{2\decayRate-2}} 
	\Big( \mE_{\tnu+\frac{\RegIndex}{2}+5}(s)^{1/2} + \SchwarzschildMass\Big)
	\Big(\mE_{\RegIndex+1}(s)^{1/2}+ \SchwarzschildMass\Big).
\end{align*}

Putting these all together, inserting the bootstrap assumptions and using also $\SchwarzschildMass^2 < \epsilon$  we find
\begin{equation}\aligned
\sum_{|I|+2j\leq\RegIndex}  \int_{4}^{s'}\Big( \int_{\Sigma_s\times\Compact} \Big( \sum_{i=1}^3 |F^{i,I,j}|_E^2 +|G^{I,j}|_E^2\Big)\di y\diCVol\Big)^{1/2} \di s
\lesssim \epsilon \int_4^{s'} \frac{1}{s^{2\decayRate-2}} \di s.
\endaligned\end{equation}
For integrability we require $2\decayRate-2> 1$, which is equivalent to each of the following
\begin{align}
\label{eq:IntegrabilityCondition}
\decayRate>{}&\frac32 , \\
n >{}& 8 . 
\end{align}
This implies $n \geq 9$. For the Sobolev estimates we require 
\begin{align}
\tnu + \RegIndex/2+4\leq \RegIndex.\end{align}
Recalling the definition of $\tnu$ given in lemma \ref{lem:SobolevOnHyperboloidsInSUSY} this certainly holds provided $\RegIndex> (n+d+8)/2$ and $\RegIndex$ is even. 

Consequently for sufficiently small $\epsilon$ and by Gr\"onwall's inequality applied to the energy estimate \eqref{eq:SumEnergyInequality} we find $\mE_{\nu+1}(s) \leq \tfrac12 C_1\epsilon$. We have thus obtained a future global solution $h_{\mu\nu}=g_{\mu\nu}-\bMet_{\mu\nu}$ to the reduced Einstein equations and which clearly satisfies the decay bounds given in theorem \ref{thm:ResultsReducedEquations}. 
\end{proof}

\begin{remark}
The system \eqref{eq:ReducedEinsteinEqsInPerturbedMetic} contains quadratic nonlinearities $F_{\ica\icb},F_{i\ica}$ that are new compared to the weak-null terms identified in the proof of Minkowski stability in  \cite{LindbladRodnianski:WeakNull, LindbladRodnianski:MinkowskiStability} and the proof of zero-mode Kaluza-Klein stability in \cite{Wyatt}. 
\end{remark}

\subsection{Proof of Theorem \ref{thm:MainResult}}
We are now in a position to use the results from theorem \ref{thm:ResultsReducedEquations} in order to prove our main result. 
Take an initial data set $(\Reals^n\times\Compact,\initialMetric,\initialIIFF)$ as specified in theorem \ref{thm:MainResult} with smallness conditions \eqref{eq:MainThmSmallness}. We now transform this data into the form required by theorem \ref{thm:ResultsReducedEquations}, which is a standard procedure, see for example \cite{LindbladRodnianski:StabilityAgainstCompactPerturbations}. We first set $((\RedinitialMetric)_{i'j'},(\RedinitialDeriv)_{i'j'})=(\initialMetric_{i'j'}, \initialIIFF_{i'j'})$.  Diffeomorphism invariance allow us the freedom to choose the lapse and shift. We set the shift to be zero $\shift_{i'}=0$. We choose the lapse to be a smooth function satisfying 
\begin{equation} \aligned
\lapse(r)&{}=1, \quad r\leq 1/2,\\
|\lapse-1|&{}\lesssim \SchwarzschildMass,\quad1/2\leq r\leq1,\\
\lapse(r)&{}=\left(1 -\frac{h_{00}(r^{-1})}{r^{n-2}}\right)^{1/2}, \quad r\geq 1. \\
\endaligned 
\end{equation}

We relate the lapse and shift with the Cauchy data for the reduced equations in theorem \ref{thm:ResultsReducedEquations}  by setting $(\RedinitialMetric)_{00}= -\lapse^2$ and
 $(\RedinitialMetric)_{0i'}=\shift_{i'}$. The initial data for $(\partial_t \lapse,\partial_t \shift_{i'})=((\RedinitialDeriv)_{00},(\RedinitialDeriv)_{0i'})$ is chosen by satisfying $V^\gamma=0$. This amounts to solving the following equations on $\Reals^n\times\Compact$
\begin{equation} \label{def:TimeDerivLapseShift}\aligned 
\lapse^{-3}\left((\RedinitialDeriv)_{00} +\lapse^2 \initialMetric^{i'j'}\initialIIFF_{i'j'}\right)&{}= \RedinitialMetric^{i'j'}\Gamma[\refMet]^0_{i'j'},\\
-\lapse^{-2}\initialMetric^{i'j'} (\RedinitialDeriv)_{0j'}-\lapse^{-1}\initialMetric^{i'j'}\partial_{j'} \lapse + \initialMetric^{j'k'}\Gamma^{i'}_{j'k'}[\initialMetric]&{}=\RedinitialMetric^{j'k'}\Gamma[\refMet]^{i'}_{j'k'}.
\endaligned \end{equation}
We have now brought the initial data of theorem \ref{thm:MainResult} into the form of theorem \ref{thm:ResultsReducedEquations}. It remains to check that our assumptions on the lapse and shift are compatible with smallness conditions \eqref{eq:ThmReducedEEsSmallness}.
To do this, recall the final sentence of theorem \ref{thm:higherDimensionalSchwarzschildExistsInWaveCoordinates}. 
This implies the following
\begin{align*}\int_{\{r\geq1\}\cap\Reals^n} |\nabla[\RedinitialMetric]^I (-\lapse^2-\eta_{00} )|^2\di x
\leq{}& \int_{\{r\geq1\}\cap\Reals^n}\SchwarzschildMass^2(r^{-(n-2)-|I|})^2 r^{n-1}\di r \di^{n-1}\omega_{\sphereN{n-1}} \\
\leq{}& \SchwarzschildMass^2 \int_{\{r\geq1\}\cap\Reals^n}r^{-(n-3)-2|I|} \di r \di^{n-1}\omega_{\sphereN{n-1}} \\
\leq{}& C \SchwarzschildMass^2.
\end{align*} 
By inverting the expressions \eqref{def:TimeDerivLapseShift} for $(\partial_t \lapse,\partial_t \shift_{i'})$ it is clear that the smallness conditions \eqref{eq:ThmReducedEEsSmallness} are satisfied.  
Furthermore it is a standard result, see for example \cite[Theorem 8.3]{ChoquetBruhatOUP},  that the future global solution constructed in theorem \ref{thm:ResultsReducedEquations} is in fact also a solution to the full Einstein equations.

Finally, note that the solution found in theorem \ref{thm:ResultsReducedEquations} is only defined to the future $t\geq4$. Nonetheless, by time translation, we can treat the initial data as being on $\{t=0\}$ instead of $\{t=4\}$, so that theorem 5.1 ensures the existence of a solution for $t\geq 0$. By time reversibility for the Einstein equation (and the reduced Einstein equation), we similarly obtain a solution for $t\leq 0$. Thus, we can construct the global solution required in theorem \ref{thm:MainResult}.

It now remains to prove the causal geodesic completeness of $(\Reals^{1+n}\times\Compact,g)$. 

Globally, the metrics $g$ and $\bMet$ are very close, in the sense that, with respect to a basis constructed from the $X_i$ and an orthonormal basis on $\Compact$, their components vanish to order $\epsilon$ globally. Denote from now onwards $T=\di t$. This is a globally timelike one-form such that $|g(T,T)-1| \lesssim \epsilon$. Thus, $g-2TT$ defines a Riemannian metric. (Note that in the introduction, we used the slightly different Euclidean metric $\bMet-2TT$.) Within this proof, we define, for a vector $u$, the Euclidean length to be
\begin{align}
|u|^2
={}& u^{\ia} u^{\ib} (g_{\ia\ib}+2T_{\ia}T_{\ib}).
\end{align}
Note that the fact that $g$ and $\bMet$ are very close implies the equivalence $|u|_E \sim |u|$. 

Consider a causal geodesic $\gamma$ that is affinely parameterised by $\lambda$. For the remainder of this paragraph, let $t=t(\lambda)$ denote the value of the Cartesian coordinate $t$ at the point $\gamma(\lambda)$. By rescaling, we may assume that $\di t/\di \lambda=1$ at $t=0$. Let $v$ be the (artificial, Euclidean) speed defined by $v\geq 0$ and 
\begin{align}
v^2 = \left| \frac{\di \gamma^\ia}{\di \lambda}\right|^2.
\end{align}
Since $g$ and $\bMet$ are very close, the rate of change in the $t$ direction cannot be (much) greater than the Euclidean speed, i.e.{} $|\frac{\di t}{\di\lambda}|$ $=|\frac{\di\gamma^0}{\di\lambda}|$ $\lesssim v$. On the other hand, since $\gamma$ is causal, the component of $\frac{\di\gamma}{\di\lambda}$ in the $T$ direction cannot vanish faster than the the length of the component in the orthogonal spatial directions, and the square of Euclidean velocity is the sum of the squares of the lengths of the $T$ components and the orthogonal spatial component (up to order $\epsilon$ multiplicative errors); thus $|\frac{\di t}{\di\lambda}|$ $=|\frac{\di\gamma^0}{\di\lambda}|$ $\gtrsim v$. In particular, there is the equivalence $|\frac{\di t}{\di\lambda}|$ $\sim v$.

The rate of change of the velocity is, since $\nabla[g] g=0$ and $\nabla[g]_{\frac{\di\gamma}{\di\lambda}}\frac{\di\gamma}{\di\lambda}=0$,
\begin{align}
\frac{\di}{\di\lambda} v^2
={}& 4\left(\frac{\di\gamma^\ia}{\di\lambda}T_\ia\right)\left(\frac{\di\gamma^\ib}{\di\lambda}\nabla[g]_{\frac{\di\gamma}{\di\lambda}} T_\ib\right) .
\end{align}
Since the absolute value of $\frac{\di\gamma^\ia}{\di\lambda}T_\ia=\frac{\di t}{\di \lambda}$ and the Euclidean length of $\frac{\di\gamma}{\di\lambda}$ are dominated by $v$
\begin{align}
\frac{\di v}{\di \lambda}
\lesssim{}& \left|\nabla[g]_{\frac{\di\gamma}{\di\lambda}} T\right| v . 
\end{align}
The $\CD[g] T$ can be expanded in terms of $g$ and $\CD[\bMet] g$. Both of these have norms that decay as $t^{-\decayRate}$ due to \eqref{eq:IntegrabilityCondition}. Thus, 
\begin{align}
\frac{\di v}{\di \lambda}
\lesssim{}& \epsilon t^{-\decayRate} v^2 . 
\end{align}
Thus, for $\epsilon$ sufficiently small, a simple bootstrap argument shows that $v\sim 1$ along all of $\gamma$. Thus, $\frac{\di t}{\di \lambda} \sim 1$. In particular, $t$ is monotone along $\gamma$. 

Let $t_{\sup}$ be the supremum of the $t$ values that are achieved along $\gamma$. For contradiction, suppose $t_{\sup}<\infty$. Since the length of the spatial component of $\frac{\di\gamma}{\di\lambda}$ is also uniformly equivalent to $v$, and hence to $\frac{\di t}{\di\lambda}$, it follows that, as $t\nearrow t_{\sup}$, the curve $\gamma$ has a limit in $\Reals^{1+n}\times\Compact$. Because of the global bounds on $g$ and its derivatives, by the standard Picard-Lindel\"of theorem for ODEs, the curve $\gamma$ must smoothly extend through this limiting point, contradicting the definition of $t_{\sup}$. Thus, $t_{\sup}=\infty$. The only other way in which $\gamma$ can be future incomplete is if $t$ diverges to $\infty$ in a finite $\lambda$ interval, but this is also impossible, since $\frac{\di t}{\di \lambda}\sim 1$. By time symmetry, the same argument holds in the past. 
Thus, any causal geodesic is complete. 

The previous construction shows that every causal geodesic goes through each level set of $t$. Thus, the level sets of $t$ are Cauchy surfaces, and $(\Reals^{1+n}\times\Compact,g)$ is globally hyperbolic.

\noindent\textbf{Acknowledgements.}
L.A. thanks the Mathematics Department at Harvard University for hospitality and support during the spring term of 2011, when the initial investigation of this topic was carried out. L.A. is also grateful to the YMSC at Tsinghua University, Beijing, for support and hospitality during part of the work of this paper, and thanks Pin Yu, and Po-Ning Chen for helpful discussions. The authors  L.A., P.B. and Z.W. thank the hospitality of the Institut Mittag-Leffler, Djursholm, Sweden during the winter semester of 2019. Z.W. is supported by MIGSAA, a CDT funded by EPSRC, the Scottish Funding Council, Heriot-Watt University and the University of Edinburgh. S-T.Y. is supported by the Black Hole Initiative, Award 61497, at Harvard University and by NSF Grant DMS-1607871 ``Analysis, Geometry and Mathematical Physics''.


\end{document}